 \newtheorem{remark}{Remark}
 \newtheorem{lemma}[remark]{Lemma}
 \newtheorem{theorem}[remark]{Theorem}
 \newtheorem{proposition}[remark]{Proposition}
 \newtheorem{corollary}[remark]{Corollary}
  \newtheorem{claim}[remark]{Claim}
\newcommand{\Sd}{\operatorname{Sd}}
\newcommand{\Perm}{\operatorname{Perm}}
\title{The Simultaneous Metric Dimension of Families Composed by Lexicographic Product Graphs}
\author{Yunior Ram\'{\i}rez-Cruz, Alejandro Estrada-Moreno and Juan A.
Rodr\'{\i}guez-Vel\'{a}zquez\\
{\small Departament d'Enginyeria Inform\`atica i Matem\`atiques,}\\
{\small Universitat Rovira i Virgili,}  {\small Av. Pa\"{\i}sos
Catalans 26, 43007 Tarragona, Spain.} \\{\small
yunior.ramirez\@@urv.cat, alejandro.estrada\@@urv.cat, juanalberto.rodriguez\@@urv.cat}}
\begin{document}
\maketitle

\begin{abstract}
Let ${\cal G}$ be a graph family defined on a common (labeled) vertex set $V$. 
A set   $S\subseteq V$ is said to be a simultaneous metric generator for ${\cal G}$ if for every $G\in {\cal G}$ and every pair of different vertices $u,v\in V$ there exists $s\in S$ such that $d_{G}(s,u)\ne d_{G}(s,v)$, where $d_{G}$ denotes the  geodesic distance.
A \emph{simultaneous adjacency generator} for ${\cal G}$ is a simultaneous metric generator under the metric $d_{G,2}(x,y)=\min\{d_{G}(x,y),2\}$.
A minimum cardinality simultaneous metric (adjacency) generator for ${\cal G}$ is a simultaneous metric (adjacency) basis, and its cardinality the simultaneous metric (adjacency) dimension of ${\cal G}$. Based on the simultaneous adjacency dimension, we study the simultaneous metric dimension of families composed by lexicographic product graphs.
\end{abstract}

\section{Introduction}
\label{sectIntro}

A generator of a metric space $(X,d)$ is a set $S\subset X$ of points in the space  with the property that every point of $X$  is uniquely determined by the distances from the elements of $S$. Given a simple and connected graph $G=(V,E)$, we consider the function $d_G:V\times V\rightarrow \mathbb{N}\cup \{0\}$, where $d_G(x,y)$ is the length of a shortest path between $u$ and $v$ and $\mathbb{N}$ is the set of positive integers. Then $(V,d_G)$ is a metric space since $d_G$ satisfies $(i)$ $d_G(x,x)=0$  for all $x\in V$,$(ii)$  $d_G(x,y)=d_G(y,x)$  for all $x,y \in V$ and $(iii)$ $d_G(x,y)\le d_G(x,z)+d_G(z,y)$  for all $x,y,z\in V$. A vertex $v\in V$ is said to \textit{distinguish} two vertices $x$ and $y$ if $d_G(v,x)\ne d_G(v,y)$.
A set $S\subset V$ is said to be a \emph{metric generator} for $G$ if any pair of vertices of $G$ is
distinguished by some element of $S$. A minimum cardinality metric generator is called a \emph{metric basis}, and
its cardinality the \emph{metric dimension} of $G$, denoted by $\dim(G)$.  

The notion of metric dimension of a graph was introduced by Slater in \cite{Slater1975}, where metric generators were called \emph{locating sets}. Harary and Melter independently introduced the same concept in  \cite{Harary1976}, where metric generators were called \emph{resolving sets}. Applications of this invariant to the navigation of robots in networks are discussed in \cite{Khuller1996} and applications to chemistry in \cite{Johnson1993,Johnson1998}.  Several variations of metric generators, including resolving dominating sets \cite{Brigham2003}, independent resolving sets \cite{Chartrand2003}, local metric sets \cite{Okamoto2010}, strong resolving sets \cite{Sebo2004}, adjacency resolving sets  \cite{JanOmo2012}, $k$-metric generators \cite{Estrada-Moreno2013,Estrada-Moreno2013corona}, simultaneous metric generators \cite{Ramirez-Cruz-Rodriguez-Velazquez_2014,Ramirez2014}, etc., have since been introduced and studied. 

The concept of adjacency generator\footnote{Adjacency generators were called adjacency resolving sets in   \cite{JanOmo2012}.} was introduced by Jannesari and Omoomi in \cite{JanOmo2012} as a tool to study the metric dimension of lexicographic product graphs. This concept has been studied further by Fernau and Rodr\'{i}guez-Vel\'{a}zquez in \cite{Rodriguez-Velazquez-Fernau2013,Fernau-Ja-Corona-2014} where they showed
that the (local) metric dimension of the corona product of a graph of order $n$ and some
non-trivial graph $H$ equals $n$ times the (local) adjacency metric dimension of $H$. As a consequence of this strong relation they showed that the problem of computing the adjacency metric dimension is
NP-hard.   A set $S\subset V$ of vertices in a graph $G=(V,E)$ is said to be  an \emph{adjacency generator} for $G$  if for every two vertices $x,y\in V-S$ there exists $s\in S$ such that $s$ is adjacent to exactly one of $x$ and $y$. A minimum cardinality adjacency generator is called an \emph{adjacency basis} of $G$, and its cardinality  the \emph{adjacency dimension} of $G$,  denoted by $\dim_A(G)$.
Since any adjacency basis is a metric generator, $\dim(G)\le \dim_A(G)$.  Besides, for any connected graph $G$ of diameter at most two, $\dim_A(G)=\dim(G)$.

As pointed out in \cite{Rodriguez-Velazquez-Fernau2013,Fernau-Ja-Corona-2014}, 
any  adjacency generator of a graph $G=(V,E)$ is  also a metric generator in a suitably chosen metric space.
Given a positive integer $t$,  we define the distance function $d_{G,t}:V\times V\rightarrow \mathbb{N}\cup \{0\}$, where
\begin{equation*}\label{distinguishAdj}
d_{G,t}(x,y)=\min\{d_G(x,y),t\}.
\end{equation*}
Then any metric generator for $(V,d_{G,t})$ is a metric generator for $(V,d_{G,t+1})$ and, as a consequence, the metric dimension of $(V,d_{G,t+1})$  is less than or equal to the metric dimension of $(V,d_{G,t})$. In particular, the metric dimension of $(V,d_{G,1})$ is equal to $|V|-1$,  the metric dimension of $(V,d_{G,2})$ is equal to $\dim_A(G)$ and, if $G$ has diameter $D(G)$, then $d_{G,D(G)}=d_G$ and so  the metric dimension of  $(V,d_{G,D(G)})$  is equal to $\dim(G)$.
Notice that when using the metric $d_{G,t}$ 
the concept of metric generator needs not be restricted to the case of connected graphs\footnote{For any pair of vertices $x,y$ belonging to different connected components of $G$ we can assume that $d_G(x,y)=\infty$ and so $d_{G,t}(x,y)=t$ for any $t$ greater than or equal to the maximum diameter of a connected component of $G$.}. Notice that $S$ is an adjacency generator for $G$ if and only if $S$ is an adjacency generator for its complement $\overline{G}$. This is justified by the fact that given an adjacency generator $S$ for $G$, it holds that for every $x,y\in V- S$ there exists $s\in S$ such that $s$ is adjacent to exactly one of $x$ and $y$, and this property holds in $\overline{G}$. Thus, $\dim_A(G)=\dim_A(\overline{G}).$

Let ${\mathcal G}=\{G_1,G_2,...,G_k\}$ be a family  of (not necessarily edge-disjoint) connected graphs $G_i=(V,E_i)$ with common vertex set $V$ (the union of whose edge sets is not necessarily the complete graph). Ram\'{i}rez-Cruz, Oellermann  and  Rodr\'{i}guez-Vel\'{a}zquez defined in \cite{Ramirez-Cruz-Rodriguez-Velazquez_2014,Ramirez2014}   a \textit{simultaneous metric generator} for ${\mathcal{G}}$ as a set $S\subset V$ such that $S$ is simultaneously a metric generator for each $G_i$. They say that a minimum simultaneous metric generator for ${\mathcal{G}}$ is  a \emph{simultaneous metric basis} of ${\mathcal{G}}$, and
its cardinality the \emph{simultaneous metric dimension} of ${\mathcal{G}}$, denoted by $\Sd({\mathcal{G}})$ or explicitly by $\Sd( G_1,G_2,...,G_k )$. Analogously, we define a \emph{simultaneous adjacency generator} for ${\cal G}$ to be a set $S\subset V$ such that $S$ is simultaneously an adjacency generator for each $G_i$. We say that a minimum simultaneous adjacency generator for ${\cal G}$ is a \emph{simultaneous adjacency basis} of ${\cal G}$, and
its cardinality the \emph{simultaneous adjacency dimension} of ${\cal G}$, denoted by $\Sd_A({\cal G})$ or explicitly by $\Sd_A(G_1,G_2,...,G_t)$. For instance, the set $\{v_1,v_3,v_5,v_8\}$ is a simultaneous adjacency basis of the family ${\cal G}=\{G_1,G_2,G_3\}$ shown in Figure  \ref{ExSimultaneousAdjBasis},  while the set $\{v_1,v_5,v_8\}$ is a simultaneous metric basis, so $\Sd_A({\cal G})=4>3=\Sd({\cal G})$.

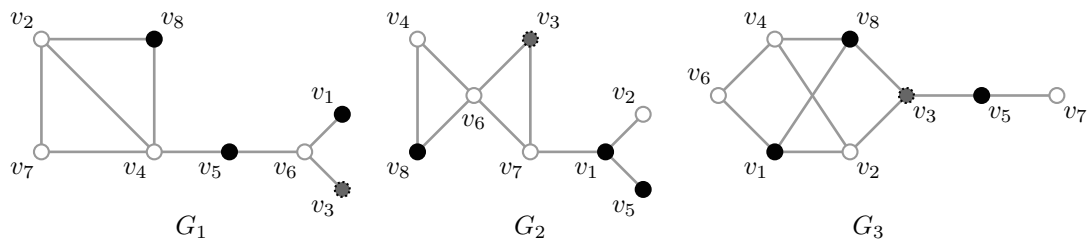
\begin{figure}[h]
\begin{center}
\begin{tikzpicture}[inner sep=0.7mm, place/.style={circle,draw=black!40,
fill=white,thick},dxx/.style={circle,draw=black!99,fill=black!99,thick},lxx/.style={circle,draw=black!99,densely dotted,fill=black!60,thick},transition/.style={rectangle,draw=black!50,fill=black!20,thick},line width=1pt,scale=0.5]

\coordinate (A) at (0,1);
\coordinate (B) at (3,1);
\coordinate (C) at (3,4);
\coordinate (D) at (0,4);
\coordinate (E) at (5,1);
\coordinate (F) at (7,1);
\coordinate (G) at (8,0);
\coordinate (H) at (8,2);

\coordinate (I) at (10,1);
\coordinate (J) at (11.5,2.5);
\coordinate (K) at (13,1);
\coordinate (L) at (13,4);
\coordinate (M) at (10,4);
\coordinate (N) at (15,1);
\coordinate (O) at (16,0);
\coordinate (P) at (16,2);

\coordinate (Q) at (18,2.5);
\coordinate (R) at (19.5,1);
\coordinate (S) at (21.5,1);
\coordinate (T) at (23,2.5);
\coordinate (U) at (21.5,4);
\coordinate (V) at (19.5,4);
\coordinate (W) at (25,2.5);
\coordinate (X) at (27,2.5);

\draw[black!40] (B) -- (C) -- (D) -- (A) -- (B) -- (E) -- (F) -- (G);
\draw[black!40] (B) -- (D);
\draw[black!40] (F) -- (H);

\draw[black!40] (K) -- (L) -- (J) -- (M) -- (I) -- (J) -- (K) -- (N) -- (O);
\draw[black!40] (N) -- (P);

\draw[black!40] (X) -- (W) -- (T) -- (S) -- (R) -- (Q) -- (V) -- (U) -- (T);
\draw[black!40] (V) -- (S);
\draw[black!40] (U) -- (R);

\node at (A) [place]  {};
\coordinate [label=center:{$v_7$}] (v17) at (-0.5,0.5);
\node at (B) [place]  {};
\coordinate [label=center:{$v_4$}] (v14) at (2.5,0.5);
\node at (C) [dxx]  {};
\coordinate [label=center:{$v_8$}] (v18) at (3.5,4.5);
\node at (D) [place]  {};
\coordinate [label=center:{$v_2$}] (v12) at (-0.5,4.5);
\node at (E) [dxx]  {};
\coordinate [label=center:{$v_5$}] (v15) at (4.5,0.5);
\node at (F) [place]  {};
\coordinate [label=center:{$v_6$}] (v16) at (6.5,0.5);
\node at (G) [lxx]  {};
\coordinate [label=center:{$v_3$}] (v13) at (7.5,-0.5);
\node at (H) [dxx]  {};
\coordinate [label=center:{$v_1$}] (v11) at (7.5,2.5);

\node at (I) [dxx]  {};
\coordinate [label=center:{$v_8$}] (v28) at (9.5,0.5);
\node at (J) [place]  {};
\coordinate [label=center:{$v_6$}] (v26) at (11.5,1.8);
\node at (K) [place]  {};
\coordinate [label=center:{$v_7$}] (v27) at (12.5,0.5);
\node at (L) [lxx]  {};
\coordinate [label=center:{$v_3$}] (v23) at (13.5,4.5);
\node at (M) [place]  {};
\coordinate [label=center:{$v_4$}] (v24) at (9.5,4.5);
\node at (N) [dxx]  {};
\coordinate [label=center:{$v_1$}] (v21) at (14.5,0.5);
\node at (O) [dxx]  {};
\coordinate [label=center:{$v_5$}] (v25) at (15.5,-0.5);
\node at (P) [place]  {};
\coordinate [label=center:{$v_2$}] (v22) at (15.5,2.5);

\node at (Q) [place]  {};
\coordinate [label=center:{$v_6$}] (v32) at (17.5,3);
\node at (R) [dxx]  {};
\coordinate [label=center:{$v_1$}] (v38) at (19,0.5);
\node at (S) [place]  {};
\coordinate [label=center:{$v_2$}] (v33) at (22,0.5);
\node at (T) [lxx]  {};
\coordinate [label=center:{$v_3$}] (v36) at (23.5,2);
\node at (U) [dxx]  {};
\coordinate [label=center:{$v_8$}] (v34) at (22,4.5);
\node at (V) [place]  {};
\coordinate [label=center:{$v_4$}] (v31) at (19,4.5);
\node at (W) [dxx]  {};
\coordinate [label=center:{$v_5$}] (v35) at (25.5,2);
\node at (X) [place]  {};
\coordinate [label=center:{$v_7$}] (v37) at (27.5,2);

\coordinate [label=center:{$G_1$}] (G1) at (4,-1);
\coordinate [label=center:{$G_2$}] (G2) at (13,-1);
\coordinate [label=center:{$G_3$}] (G3) at (22,-1);

\end{tikzpicture}

\end{center}
\caption{The set $\{v_1,v_3,v_5,v_8\}$ is a simultaneous adjacency basis of ${\cal G}=\{G_1,G_2,G_3\}$, whereas $\{v_1,v_5,v_8\}$ is a simultaneous metric basis.}
\label{ExSimultaneousAdjBasis}
\end{figure}

The study of simultaneous parameters in graphs was introduced by Brigham and  Dutton in \cite{Brigham1990}, where they studied simultaneous  domination. 
This should not be  confused with studies on families sharing a constant value on a parameter, for instance the study presented in \cite{IMRAN2013}, where several graph families such that all members have a constant metric dimension are studied, enforcing no constraints regarding whether all members share a metric basis or not. In particular, the study of the simultaneous metric dimension  was introduced  in \cite{Ramirez-Cruz-Rodriguez-Velazquez_2014,Ramirez2014}, where the authors obtained sharp bounds for this invariant  for general families of graphs and gave closed formulae
or tight bounds for the simultaneous metric dimension of several specific graph families. For a given graph $G$ they described a process for obtaining a lower bound
on the maximum number of graphs in a family containing $G$ that has simultaneous metric dimension equal to $\dim(G)$. Moreover, it was shown that the problem of finding the simultaneous metric dimension of families of trees is $NP$-hard.

In this paper, we introduce the simultaneous adjacency dimension and study several of its properties, as well as its relations to the simultaneous metric dimension. We put a special focus on the usefulness of the simultaneous adjacency dimension as a tool for the study of the simultaneous metric dimension of families composed by lexicographic product graphs. Let $G$ be a graph of order $n$, and let $(H_1,H_2,\ldots,H_n)$ be an ordered $n$-tuple of graphs of order $n'_1$, $n'_2$, \ldots, $n'_n$, respectively. The \emph{lexicographic product} of $G$ and $(H_1,H_2,\ldots,H_n)$ is the graph $G \circ (H_1,H_2,\ldots,H_n)$, such that $V(G \circ (H_1,H_2,\ldots,H_n))=\bigcup_{u_i \in V(G)} (\{u_i\} \times V(H_i))$ and $(u_i,v_r)(u_j,v_s) \in E(G \circ (H_1,H_2,\ldots,H_n))$ if and only if $u_iu_j \in E(G)$ or $i=j$ and $v_rv_s \in E(H_i)$. We will restrict our study to two particular cases. First, given two vertex-disjoint graphs $G=(V_{1},E_{1})$ and $H=(V_{2},E_{2})$, the \emph{join} of $G$ and $H$, denoted as $G+H$, is the graph with vertex set $V(G+H)=V_{1}\cup V_{2}$ and edge set $E(G+H)=E_{1}\cup E_{2}\cup \{uv\,:\,u\in V_{1},v\in V_{2}\}$. Join graphs are lexicographic product graphs, as $G+H \cong P_2 \circ (G,H)$. The other particular case we will focus on is the traditional lexicographic product graph, where $H_i \cong H$ for every $i \in \{1,\ldots,n\}$, which is denoted as $G \circ H$ for simplicity.

The remainder of the paper is structured as follows. Section~\ref{sectBasicResults} covers general bounds and basic results on the simultaneous adjacency dimension. Section~\ref{sectFamJoins} is devoted to families composed by join graphs, whereas Section~\ref{sectFamStdLex} covers families composed by standard lexicographic product graphs. Throughout the paper, we will use the notation $K_n$, $K_{r,s}$, $C_n$, $N_n$ and $P_n$ for complete graphs, complete bipartite graphs, cycle graphs, empty graphs and path graphs of order $n$, respectively. We use the notation $u \sim v$ if $u$ and $v$ are adjacent and $G \cong H$ if $G$ and $H$ are isomorphic graphs. For a vertex $v$ of a graph $G$, $N_G(v)$ will denote the set of neighbours or \emph{open neighbourhood} of $v$ in $G$, \emph{i.e.} $N_G(v)=\{u \in V(G):\; u \sim v\}$. The \emph{closed neighbourhood}, denoted by $N_G[v]$, equals $N_G(v) \cup \{v\}$. If there is no ambiguity, we will simple write $N(v)$ or $N[v]$. Two vertices $x,y\in V(G)$  are \textit{twins} in $G$ if $N_G[x]=N_G[y]$ or $N_G(x)=N_G(y)$. If $N_G[x]=N_G[y]$, they are said to be \emph{true twins}, whereas if $N_G(x)=N_G(y)$ they are said to be \emph{false twins}. We denote by $\delta(v)=|N(v)|$ the degree of vertex $v$, as well as $\delta(G)=\min_{v \in V(G)}\{\delta(v)\}$ and $\Delta(G)=\max_{v \in V(G)}\{\delta(v)\}$. The subgraph of $G$ induced by a set $S$ of vertices is denoted  by $\langle S\rangle_G$. If there is no ambiguity, we will simply write $\langle S\rangle$. The diameter of a  graph $G$ is denoted by $D(G)$ and its girth by $\mathtt{g}(G)$. For the remainder of the paper, definitions will be introduced whenever a concept is needed.

\section{Basic results on the simultaneous adjacency dimension}\label{sectBasicResults}

\begin{remark}\label{trivialBoundsSimAdjDim}
For any family ${\cal G}=\{G_1,G_2,...,G_t\}$ of  connected graphs on a common vertex set $V$, the following results hold:
\begin{enumerate}[{\rm (i)}]
\item $\Sd_A({\cal G}) \geq \underset{i\in \{1,...,k\}}{\max}\{\dim_A(G_i)\}$.
\item $\Sd_A({\cal G})\geq\Sd({\cal G})$.
\item $\Sd_A({\cal G}) \leq \vert V\vert-1$.
\end{enumerate}
\end{remark}

\begin{proof}
(i) is deduced directly from the definition of simultaneous adjacency dimension, while (iii) is obtained from the fact that for any non-trivial graph $G=(V,E)$ it holds that for any $v\in V$ the set $V-\{v\}$ is an adjacency generator. Let $B$ be a simultaneous adjacency basis of ${\cal G}$ and let $u,v \in V-B$, be two different vertices. For every graph $G_i$, there exists $x \in B$ such that $d_{G_i,2}(u,x)\ne d_{G_i,2}(v,x)$, so $d_{G_i}(u,x) \neq d_{G_i}(v,x)$. Thus, $B$ is a simultaneous metric generator for ${\cal G}$ and, as a consequence, (ii) follows.
\end{proof}

As pointed out in \cite{JanOmo2012},  $\dim_A(G)=n-1$ if and only if $G=K_n$ or $G=N_n$. The following result follows directly from Remark \ref{trivialBoundsSimAdjDim}.

\begin{corollary}\label{casesV_1}
Let ${\cal G}$ be a graph family on a common vertex set $V$. If $K_{\vert V \vert} \in {\cal G}$ or $N_{\vert V \vert} \in {\cal G}$, then $\Sd_A({\cal G})=\vert V \vert - 1$.
\end{corollary}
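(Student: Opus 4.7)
The plan is to sandwich $\Sd_A(\mathcal{G})$ between matching lower and upper bounds, each coming from one part of Remark~\ref{trivialBoundsSimAdjDim}, and to invoke the Jannesari--Omoomi characterization $\dim_A(G)=n-1 \Leftrightarrow G\in\{K_n,N_n\}$ to activate the lower bound in our specific hypothesis.

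First I would note that by hypothesis there exists some index $j$ such that $G_j\cong K_{|V|}$ or $G_j\cong N_{|V|}$, and hence by the cited result $\dim_A(G_j)=|V|-1$. Applying part (i) of Remark~\ref{trivialBoundsSimAdjDim} then yields
\[
\Sd_A(\mathcal{G}) \;\geq\; \max_{i}\{\dim_A(G_i)\} \;\geq\; \dim_A(G_j) \;=\; |V|-1.
\]
For the matching upper bound, I would appeal directly to part (iii) of the same remark, which gives $\Sd_A(\mathcal{G})\leq |V|-1$ unconditionally. Combining the two inequalities delivers the equality.

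There is no real obstacle here: the corollary is a straightforward combination of an already-proved remark with a known characterization from the literature, and the only small care needed is to make sure the applicability conditions of Remark~\ref{trivialBoundsSimAdjDim} are met, namely that $\mathcal{G}$ is a family of graphs on a common vertex set $V$ (which is assumed) and that the upper bound in (iii) does not require connectedness of all members beyond what is needed to define adjacency generators, since $N_{|V|}$ is disconnected. The definition of adjacency generator via $d_{G,2}$ extends to disconnected graphs as discussed in the paper, so nothing breaks.
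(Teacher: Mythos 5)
Your proof is correct and follows essentially the same route as the paper, which also derives the corollary directly from Remark~\ref{trivialBoundsSimAdjDim} (parts (i) and (iii)) together with the characterization $\dim_A(G)=n-1$ if and only if $G\in\{K_n,N_n\}$ from \cite{JanOmo2012}. Your extra observation that part (iii) still applies to the disconnected graph $N_{|V|}$, because adjacency generators are defined via $d_{G,2}$, is a sensible precaution that the paper glosses over but does not change the argument.
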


The converse of Corollary~\ref{casesV_1} does not hold, as we will exemplify in Corollary~\ref{partCaseStars}. We first recall a characterization of the cases where $\Sd({\cal G})=|V|-1$, presented in \cite{Ramirez-Cruz-Rodriguez-Velazquez_2014}.

\begin{theorem}\label{SdVminus1}{\rm \cite{Ramirez-Cruz-Rodriguez-Velazquez_2014}}
Let ${\mathcal G}$ be a  family of connected graphs on a common vertex set $V$. Then $\Sd({\mathcal G})=\vert V\vert-1$ if and only if for every pair $u,v\in V$, there exists a graph $G_{uv}\in {\mathcal G}$ such that $u$ and $v$ are twins in $G_{uv}$.
\end{theorem}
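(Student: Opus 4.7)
The plan is to prove both directions by exploiting the standard metric-twin fact: if $u,v$ are twins in a connected graph $G$ (either true twins, $N_G[u]=N_G[v]$, or false twins, $N_G(u)=N_G(v)$), then $d_G(w,u)=d_G(w,v)$ for every $w\in V\setminus\{u,v\}$; conversely, if $u,v$ are not twins in $G$ then some vertex $w\in V\setminus\{u,v\}$ lies in the symmetric difference $N_G(u)\triangle N_G(v)$, hence is adjacent to exactly one of $u,v$, and so satisfies $d_G(w,u)\neq d_G(w,v)$ (one distance equals $1$, the other is at least $2$).

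For sufficiency, assume that for every pair $u,v\in V$ there is some $G_{uv}\in\mathcal G$ in which $u,v$ are twins. Let $S$ be any simultaneous metric generator for $\mathcal G$. If $S$ omitted two distinct vertices $u,v$, then no $s\in S\subseteq V\setminus\{u,v\}$ could distinguish $u$ from $v$ in $G_{uv}$ by the twin fact, contradicting that $S$ is a metric generator for $G_{uv}$. Hence $|V\setminus S|\leq 1$, giving $|S|\geq |V|-1$; combined with the trivial bound $\Sd(\mathcal G)\leq |V|-1$, this yields $\Sd(\mathcal G)=|V|-1$.

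For necessity, I would argue the contrapositive. Assume there exist $u,v\in V$ that are not twins in any $G_i\in\mathcal G$, and put $S:=V\setminus\{u,v\}$. I claim $S$ is a simultaneous metric generator for $\mathcal G$, which contradicts $\Sd(\mathcal G)=|V|-1$. Every pair of vertices with at least one member in $S$ is automatically distinguished by that member (distance $0$ versus a positive number), so the only pair left to check is $\{u,v\}$. For each $G_i$, the non-twin hypothesis supplies some $w_i\in V\setminus\{u,v\}=S$ adjacent to exactly one of $u,v$ in $G_i$, and this $w_i$ metrically distinguishes them. Thus $|S|=|V|-2$, the desired contradiction.

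The main technical point to verify carefully is the equivalence between being non-twins and admitting a discriminating vertex outside the pair. This needs a brief case split on whether $uv\in E(G_i)$: if $uv\notin E(G_i)$ then $N_{G_i}[u]\neq N_{G_i}[v]$ is automatic and non-twinness forces $N_{G_i}(u)\neq N_{G_i}(v)$, and any element of this symmetric difference is automatically distinct from $u$ and $v$; if $uv\in E(G_i)$, then $\{u,v\}\subseteq N_{G_i}(u)\triangle N_{G_i}(v)$ and one uses $N_{G_i}[u]\neq N_{G_i}[v]$ to locate an additional vertex of the symmetric difference in $V\setminus\{u,v\}$. Once this observation is in hand, both implications follow by a simple count of how many vertices $S$ can afford to omit.
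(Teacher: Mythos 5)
Your proof is correct. Note that the paper itself does not prove this statement: Theorem~\ref{SdVminus1} is quoted from the cited reference \cite{Ramirez-Cruz-Rodriguez-Velazquez_2014}, so there is no in-paper proof to compare against. Your argument is the standard one for such twin characterizations --- twins are indistinguishable by any third vertex, so a generator can omit at most one vertex of each ``always-twin'' pair, while for a pair that fails to be twins in every member of the family the set $V\setminus\{u,v\}$ is a simultaneous metric generator of size $\vert V\vert-2$ --- and your careful case split (on whether $uv$ is an edge) correctly establishes that non-twins in a connected graph are always distinguished by some vertex outside the pair, which is the only delicate point.
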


The following result is a direct consequence of Remark~\ref{trivialBoundsSimAdjDim} (ii), (iii) and Theorem~\ref{SdVminus1}.

\begin{remark}\label{SdAVminus1Twins}
Let ${\mathcal G}$ be a graph family on a common vertex set $V$. If for every pair $u,v\in V$, there exists a graph $G_{uv}\in {\mathcal G}$ such that $u$ and $v$ are twins in $G_{uv}$, then $\Sd_A({\cal G})=|V|-1$.
\end{remark}

For a star graph $K_{1,r}$, $r \ge 3$, it is known that $\dim_A(K_{1,r})=r-1$ and every adjacency basis is composed by all but one of its leaves. For a finite set $V=\{v_1,v_2,\ldots,v_n\}$, $n\ge 4$, let $K_{1,n-1}^i$ be the star graph having $v_i$ as its central vertex and $V-\{v_i\}$ as its leaves. We define the family ${\cal K}(V)=\{K_{1,n-1}^i :\; v_i \in V\}$. Any pair of vertices $v_p,v_q \in V$ are twins in every $K_{1,n-1}^i \in {\cal K}(V)-\{K_{1,n-1}^p,K_{1,n-1}^q\}$, so the following result is a direct consequence of Remark~\ref{SdAVminus1Twins}.

\begin{corollary}\label{partCaseStars}
For every finite set $V$ of size $|V|\ge 4$, $\Sd_A({\cal K}(V))=|V|-1$.
\end{corollary}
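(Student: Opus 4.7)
The plan is to apply Remark~\ref{SdAVminus1Twins} directly. That remark reduces the claim $\Sd_A({\cal K}(V))=|V|-1$ to the purely combinatorial task of exhibiting, for every unordered pair $\{v_p,v_q\}\subseteq V$, some member of ${\cal K}(V)$ in which $v_p$ and $v_q$ are twins.

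First, I would fix an arbitrary pair of distinct vertices $v_p,v_q\in V$. Since $|V|\geq 4$, the index set $\{1,\dots,n\}\setminus\{p,q\}$ has cardinality $n-2\geq 2$ and is in particular nonempty, so we may pick any $i\notin\{p,q\}$ and consider the star $K_{1,n-1}^{i}\in{\cal K}(V)$, whose central vertex is $v_i$ and whose leaves are $V\setminus\{v_i\}$. Both $v_p$ and $v_q$ lie in $V\setminus\{v_i\}$, so each is a leaf of $K_{1,n-1}^{i}$ adjacent only to $v_i$. Hence
\[
N_{K_{1,n-1}^{i}}(v_p)=\{v_i\}=N_{K_{1,n-1}^{i}}(v_q),
\]
which makes $v_p$ and $v_q$ false twins in $K_{1,n-1}^{i}$. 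This establishes the twin-pair hypothesis of Remark~\ref{SdAVminus1Twins} for every pair, and therefore $\Sd_A({\cal K}(V))=|V|-1$ follows immediately.

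There is essentially no obstacle here; the only thing worth double-checking is the role of the cardinality assumption. The hypothesis $|V|\geq 4$ is used precisely to guarantee that for every pair $\{v_p,v_q\}$ the index $i$ can be chosen from a nonempty (in fact two-element) set; with $|V|=3$ one could still find such an $i$, but the statement of Remark~\ref{SdAVminus1Twins} is invoked uniformly so the bound $|V|\geq 4$ matches the setting in which the family ${\cal K}(V)$ was defined. No further computation of distances or explicit adjacency resolution is required, since the entire argument is funneled through the twin characterization rather than constructed from scratch.
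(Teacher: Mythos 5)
Your proof is correct and follows essentially the same route as the paper: the paper likewise notes that any pair $v_p,v_q$ are twins in every star $K_{1,n-1}^i$ with $i\notin\{p,q\}$ and then invokes Remark~\ref{SdAVminus1Twins}. Your explicit verification that the two leaves are false twins (common neighbourhood $\{v_i\}$) just spells out the step the paper leaves implicit.
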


Let $P_3^{(1)}=(V,E_1)$, $P_3^{(2)}=(V,E_2)$ and $P_3^{(3)}=(V,E_3)$ be the three different path graphs  defined on the common vertex set $V=\{v_1,v_2,v_3\}$, where $v_i$ is the vertex of degree two in $P_3^{(i)}$, for $i\in \{1,2,3\}$. It was shown in \cite{JanOmo2012} that $\dim_A(G)=1$ if and only if $G \in \{P_1,P_2,P_3,\overline{P}_2,\overline{P}_3\}$. The following result follows directly from this fact.

\begin{remark}
The following statements hold:
\begin{enumerate}[{\rm (i)}]
\item $\Sd_A({\cal G})=1$ if and only if ${\cal G} \subseteq \{P_2,\overline{P}_2\}$, ${\cal G} \subseteq \{P_3^{(1)},P_3^{(2)},\overline{P}_3^{(1)},\overline{P}_3^{(2)}\}$, ${\cal G} \subseteq \{P_3^{(1)},P_3^{(3)},\overline{P}_3^{(1)},$ $\overline{P}_3^{(3)}\}$ or ${\cal G} \subseteq \{P_3^{(2)},P_3^{(3)},\overline{P}_3^{(2)},\overline{P}_3^{(3)}\}$.
\item $\Sd_A(P_3^{(1)}, P_3^{(2)},P_3^{(3)},\overline{P}_3^{(1)},\overline{P}_3^{(2)},\overline{P}_3^{(3)})=2$.
\end{enumerate}
\end{remark}

 The following result  is derived from the fact that any graph and its complement have the same adjacency bases.

\begin{remark}\label{simAdjDimCompl}
Let ${\cal G}=\{G_1,G_2,\ldots,G_k\}$ be a family of graphs with the same vertex set $V$, and let $\overline{\cal G}=\{\overline{G}_1,\overline{G}_2,\ldots,\overline{G}_k\}$ be the family composed by the complements of every graph in ${\cal G}$. The following assertions hold:
\begin{enumerate}[{\rm (i)}]
\item $\Sd_A({\cal G})=\Sd_A(\overline{\cal G})=\Sd_A({\cal G}\cup \overline{\cal G})$. Moreover, the simultaneous adjacency bases of ${\cal G}$ and $\overline{\cal G}$ coincide.
\item For any subfamily of graphs ${\cal G}' \subseteq \overline{\cal G}$, $\Sd_A({\cal G})=\Sd_A({\cal G} \cup {\cal G}')$.
\end{enumerate}
\end{remark}

Let $G=(V,E)$ be a graph and let  $\Perm(V)$ be the set of all permutations of $V$. Given a subset $X\subseteq V$, the stabilizer of $X$ is the set of permutations $${\cal S}(X)=\{f\in \Perm(V): f(x)=x, \; \mbox{\rm for every } x\in X\} .$$  As usual, we denote by $f(X)$ the image of a subset $X$ under $f$, \textit{i.e}., $f(X)=\{f(x):\; x\in X\}$.

Let $G=(V,E)$ be a graph and let $B\subset V$ be a nonempty set. For any  
permutation $f\in {\cal S}(B)$ of $V$ we say that a graph $G'=(V,E')$ belongs to the  family  ${\cal  G}_{B,f}(G)$ if and only if $N_{G'}(x)=f(N_G(x))$, for every $x\in B$. We define the subgraph $\langle B_G\rangle_{w}=(N_G[B],E_{w})$ of $G$, weakly induced by $B$,  where $N_G[B]=\cup_{x\in B}N_G[x]$ and $E_{w}$ is the set of all edges having at least one vertex
in $B$.


 \begin{remark}\label{Remark_induced_subgraphs_inG'}
 Let $G=(V,E)$ be a graph and let $B\subset V$ be a nonempty set. For any  $f\in {\cal{S}}(B)$ and any graph  $G'\in  {\cal{G}}_{B,f}$, $$\langle B_G \rangle_w \cong \langle B_{G'} \rangle_w.$$
 \end{remark}

\begin{proof}
Since $G'\in {\cal G}_{B,f}$, the function $f$ is a bijection from $V(G)$ onto $V(G')$. Now, since $N_{G'}(x)=f(N_G(x))$, for every $x\in B$, we conclude that $uv$ is an edge of $\langle B_G \rangle_w$ if and only if $f(u)f(v)$ is an edge of $\langle B_{G'} \rangle_w$. Therefore, 
 the restriction of $f$ to $\langle B_G \rangle_w$ is an isomorphism.
 \end{proof}

Now we define the family of graphs ${\cal  G}_B(G)$, associated to $B$, as follows.
$${\cal  G}_B(G)=\displaystyle\bigcup_{f\in {\cal S}(B)}{\cal  G}_{B,f}(G).$$

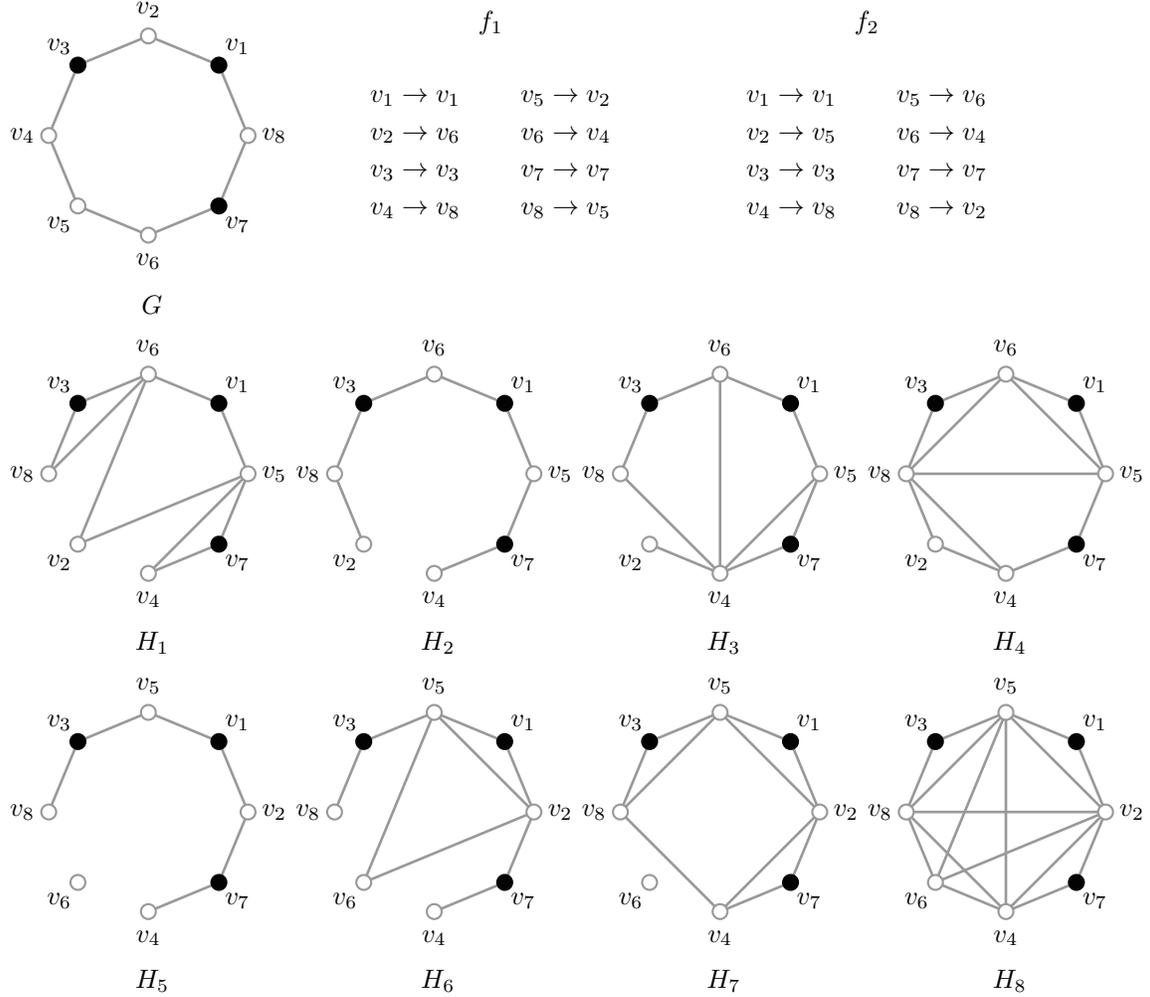
\begin{figure}[h]
\begin{center}
\begin{tikzpicture}[inner sep=0.7mm, place/.style={circle,draw=black!40,
fill=white,thick},xx/.style={circle,draw=black!99,fill=black!99,thick},
transition/.style={rectangle,draw=black!50,fill=black!20,thick},line width=1pt,scale=0.5]

\def\n{8}

\def\radius{2.65cm}
\def\lblradius{3.35cm}


\foreach \ind in {1,...,\n}\pgfmathparse{360/\n*\ind}\coordinate (b\ind) at (\pgfmathresult:\radius);

\foreach \ind in {1,...,\n}\pgfmathparse{360/\n*\ind}\coordinate (v0\ind) at (\pgfmathresult:\lblradius);

\draw[black!40] (b1) -- (b2) -- (b3) -- (b4) -- (b5) -- (b6) -- (b7) -- (b8) -- cycle;

\node [xx] at (b1) {};
\node at (v01) {$v_1$};
\node [place] at (b2) {};
\node at (v02) {$v_2$};
\node [xx] at (b3) {};
\node at (v03) {$v_3$};
\node [place] at (b4) {};
\node at (v04) {$v_4$};
\node [place] at (b5) {};
\node at (v05) {$v_5$};
\node [place] at (b6) {};
\node at (v06) {$v_6$};
\node [xx] at (b7) {};
\node at (v07) {$v_7$};
\node [place] at (b8) {};
\node at (v08) {$v_8$};

\coordinate [label=center:{$G$}] (G) at (0.1,-4.5);


\coordinate [label=center:{$f_1$}] (f1) at (9.1,3);

\coordinate [label=center:{$v_1 \rightarrow v_1$}] (f111) at (7.1,1);
\coordinate [label=center:{$v_2 \rightarrow v_6$}] (f126) at (7.1,0);
\coordinate [label=center:{$v_3 \rightarrow v_3$}] (f133) at (7.1,-1);
\coordinate [label=center:{$v_4 \rightarrow v_8$}] (f148) at (7.1,-2);

\coordinate [label=center:{$v_5 \rightarrow v_2$}] (f152) at (11.1,1);
\coordinate [label=center:{$v_6 \rightarrow v_4$}] (f164) at (11.1,0);
\coordinate [label=center:{$v_7 \rightarrow v_7$}] (f177) at (11.1,-1);
\coordinate [label=center:{$v_8 \rightarrow v_5$}] (f185) at (11.1,-2);


\coordinate [label=center:{$f_2$}] (f2) at (19.1,3);

\coordinate [label=center:{$v_1 \rightarrow v_1$}] (f211) at (17.1,1);
\coordinate [label=center:{$v_2 \rightarrow v_5$}] (f225) at (17.1,0);
\coordinate [label=center:{$v_3 \rightarrow v_3$}] (f233) at (17.1,-1);
\coordinate [label=center:{$v_4 \rightarrow v_8$}] (f248) at (17.1,-2);

\coordinate [label=center:{$v_5 \rightarrow v_6$}] (f256) at (21.1,1);
\coordinate [label=center:{$v_6 \rightarrow v_4$}] (f264) at (21.1,0);
\coordinate [label=center:{$v_7 \rightarrow v_7$}] (f277) at (21.1,-1);
\coordinate [label=center:{$v_8 \rightarrow v_2$}] (f282) at (21.1,-2);


\foreach \ind in {1,...,\n}\pgfmathparse{360/\n*\ind}\coordinate [yshift=-4.5cm] (g1\ind) at (\pgfmathresult:\radius);

\foreach \ind in {1,...,\n}\pgfmathparse{360/\n*\ind}\coordinate [yshift=-4.5cm] (v1\ind) at (\pgfmathresult:\lblradius);

\draw[black!40] (g16) -- (g17) -- (g18) -- (g11) -- (g12) -- (g13) -- (g14) -- (g12) -- (g15) -- (g18) -- (g16);

\node [xx] at (g11) {};
\node at (v11) {$v_1$};
\node [place] at (g12) {};
\node at (v12) {$v_6$};
\node [xx] at (g13) {};
\node at (v13) {$v_3$};
\node [place] at (g14) {};
\node at (v14) {$v_8$};
\node [place] at (g15) {};
\node at (v15) {$v_2$};
\node [place] at (g16) {};
\node at (v16) {$v_4$};
\node [xx] at (g17) {};
\node at (v17) {$v_7$};
\node [place] at (g18) {};
\node at (v18) {$v_5$};

\coordinate [label=center:{$H_1$}] (H1) at (0.1,-13.5);


\foreach \ind in {1,...,\n}\pgfmathparse{360/\n*\ind}\coordinate [xshift=3.8cm,yshift=-4.5cm] (g2\ind) at (\pgfmathresult:\radius);

\foreach \ind in {1,...,\n}\pgfmathparse{360/\n*\ind}\coordinate [xshift=3.8cm,yshift=-4.5cm] (v2\ind) at (\pgfmathresult:\lblradius);

\draw[black!40] (g26) -- (g27) -- (g28) -- (g21) -- (g22) -- (g23) -- (g24) -- (g25);

\node [xx] at (g21) {};
\node at (v21) {$v_1$};
\node [place] at (g22) {};
\node at (v22) {$v_6$};
\node [xx] at (g23) {};
\node at (v23) {$v_3$};
\node [place] at (g24) {};
\node at (v24) {$v_8$};
\node [place] at (g25) {};
\node at (v25) {$v_2$};
\node [place] at (g26) {};
\node at (v26) {$v_4$};
\node [xx] at (g27) {};
\node at (v27) {$v_7$};
\node [place] at (g28) {};
\node at (v28) {$v_5$};

\coordinate [label=center:{$H_2$}] (H2) at (7.7,-13.5);


\foreach \ind in {1,...,\n}\pgfmathparse{360/\n*\ind}\coordinate [xshift=7.6cm,yshift=-4.5cm] (g3\ind) at (\pgfmathresult:\radius);

\foreach \ind in {1,...,\n}\pgfmathparse{360/\n*\ind}\coordinate [xshift=7.6cm,yshift=-4.5cm] (v3\ind) at (\pgfmathresult:\lblradius);

\draw[black!40] (g35) -- (g36) -- (g37) -- (g38) -- (g31) -- (g32) -- (g33) -- (g34);
\draw[black!40] (g36) -- (g34);
\draw[black!40] (g36) -- (g32);
\draw[black!40] (g36) -- (g38);

\node [xx] at (g31) {};
\node at (v31) {$v_1$};
\node [place] at (g32) {};
\node at (v32) {$v_6$};
\node [xx] at (g33) {};
\node at (v33) {$v_3$};
\node [place] at (g34) {};
\node at (v34) {$v_8$};
\node [place] at (g35) {};
\node at (v35) {$v_2$};
\node [place] at (g36) {};
\node at (v36) {$v_4$};
\node [xx] at (g37) {};
\node at (v37) {$v_7$};
\node [place] at (g38) {};
\node at (v38) {$v_5$};

\coordinate [label=center:{$H_3$}] (H3) at (15.3,-13.5);


\foreach \ind in {1,...,\n}\pgfmathparse{360/\n*\ind}\coordinate [xshift=11.4cm,yshift=-4.5cm] (g4\ind) at (\pgfmathresult:\radius);

\foreach \ind in {1,...,\n}\pgfmathparse{360/\n*\ind}\coordinate [xshift=11.4cm,yshift=-4.5cm] (v4\ind) at (\pgfmathresult:\lblradius);

\draw[black!40] (g41) -- (g42) -- (g43) -- (g44) -- (g45) -- (g46) -- (g47) -- (g48) -- cycle;
\draw[black!40] (g46) -- (g44);
\draw[black!40] (g44) -- (g48);
\draw[black!40] (g42) -- (g44);
\draw[black!40] (g42) -- (g48);

\node [xx] at (g41) {};
\node at (v41) {$v_1$};
\node [place] at (g42) {};
\node at (v42) {$v_6$};
\node [xx] at (g43) {};
\node at (v43) {$v_3$};
\node [place] at (g44) {};
\node at (v44) {$v_8$};
\node [place] at (g45) {};
\node at (v45) {$v_2$};
\node [place] at (g46) {};
\node at (v46) {$v_4$};
\node [xx] at (g47) {};
\node at (v47) {$v_7$};
\node [place] at (g48) {};
\node at (v48) {$v_5$};

\coordinate [label=center:{$H_4$}] (H4) at (22.9,-13.5);


\foreach \ind in {1,...,\n}\pgfmathparse{360/\n*\ind}\coordinate [yshift=-9cm] (g5\ind) at (\pgfmathresult:\radius);

\foreach \ind in {1,...,\n}\pgfmathparse{360/\n*\ind}\coordinate [yshift=-9cm] (v5\ind) at (\pgfmathresult:\lblradius);

\draw[black!40] (g56) -- (g57) -- (g58) -- (g51) -- (g52) -- (g53) -- (g54);

\node [xx] at (g51) {};
\node at (v51) {$v_1$};
\node [place] at (g52) {};
\node at (v52) {$v_5$};
\node [xx] at (g53) {};
\node at (v53) {$v_3$};
\node [place] at (g54) {};
\node at (v54) {$v_8$};
\node [place] at (g55) {};
\node at (v55) {$v_6$};
\node [place] at (g56) {};
\node at (v56) {$v_4$};
\node [xx] at (g57) {};
\node at (v57) {$v_7$};
\node [place] at (g58) {};
\node at (v58) {$v_2$};

\coordinate [label=center:{$H_5$}] (H5) at (0.1,-22.5);


\foreach \ind in {1,...,\n}\pgfmathparse{360/\n*\ind}\coordinate [xshift=3.8cm,yshift=-9cm] (g6\ind) at (\pgfmathresult:\radius);

\foreach \ind in {1,...,\n}\pgfmathparse{360/\n*\ind}\coordinate [xshift=3.8cm,yshift=-9cm] (v6\ind) at (\pgfmathresult:\lblradius);

\draw[black!40] (g66) -- (g67) -- (g68) -- (g61) -- (g62) -- (g63) -- (g64);
\draw[black!40] (g62) -- (g65) -- (g68) -- cycle;

\node [xx] at (g61) {};
\node at (v61) {$v_1$};
\node [place] at (g62) {};
\node at (v62) {$v_5$};
\node [xx] at (g63) {};
\node at (v63) {$v_3$};
\node [place] at (g64) {};
\node at (v64) {$v_8$};
\node [place] at (g65) {};
\node at (v65) {$v_6$};
\node [place] at (g66) {};
\node at (v66) {$v_4$};
\node [xx] at (g67) {};
\node at (v67) {$v_7$};
\node [place] at (g68) {};
\node at (v68) {$v_2$};

\coordinate [label=center:{$H_6$}] (H6) at (7.7,-22.5);


\foreach \ind in {1,...,\n}\pgfmathparse{360/\n*\ind}\coordinate [xshift=7.6cm,yshift=-9cm] (g7\ind) at (\pgfmathresult:\radius);

\foreach \ind in {1,...,\n}\pgfmathparse{360/\n*\ind}\coordinate [xshift=7.6cm,yshift=-9cm] (v7\ind) at (\pgfmathresult:\lblradius);

\draw[black!40] (g71) -- (g72) -- (g73) -- (g74) -- (g76) -- (g77) -- (g78) -- cycle;
\draw[black!40] (g74) -- (g72) -- (g78) -- (g76);

\node [xx] at (g71) {};
\node at (v71) {$v_1$};
\node [place] at (g72) {};
\node at (v72) {$v_5$};
\node [xx] at (g73) {};
\node at (v73) {$v_3$};
\node [place] at (g74) {};
\node at (v74) {$v_8$};
\node [place] at (g75) {};
\node at (v75) {$v_6$};
\node [place] at (g76) {};
\node at (v76) {$v_4$};
\node [xx] at (g77) {};
\node at (v77) {$v_7$};
\node [place] at (g78) {};
\node at (v78) {$v_2$};

\coordinate [label=center:{$H_7$}] (H7) at (15.3,-22.5);


\foreach \ind in {1,...,\n}\pgfmathparse{360/\n*\ind}\coordinate [xshift=11.4cm,yshift=-9cm] (g8\ind) at (\pgfmathresult:\radius);

\foreach \ind in {1,...,\n}\pgfmathparse{360/\n*\ind}\coordinate [xshift=11.4cm,yshift=-9cm] (v8\ind) at (\pgfmathresult:\lblradius);

\draw[black!40] (g81) -- (g82) -- (g83) -- (g84)-- (g85) -- (g86) -- (g87) -- (g88) -- cycle;
\draw[black!40] (g82) -- (g84) -- (g86) -- (g88) -- (g82) -- (g85) -- (g88);
\draw[black!40] (g88) -- (g84);
\draw[black!40] (g82) -- (g86);

\node [xx] at (g81) {};
\node at (v81) {$v_1$};
\node [place] at (g82) {};
\node at (v82) {$v_5$};
\node [xx] at (g83) {};
\node at (v83) {$v_3$};
\node [place] at (g84) {};
\node at (v84) {$v_8$};
\node [place] at (g85) {};
\node at (v85) {$v_6$};
\node [place] at (g86) {};
\node at (v86) {$v_4$};
\node [xx] at (g87) {};
\node at (v87) {$v_7$};
\node [place] at (g88) {};
\node at (v88) {$v_2$};

\coordinate [label=center:{$H_8$}] (H8) at (22.9,-22.5);
\end{tikzpicture}

\end{center}
\caption{A subfamily ${\cal H}$ of ${\cal G}_B(C_8)$ for $B=\{v_1,v_3,v_7\}$, where $\{H_1,H_2,H_3,H_4\} \subseteq {\cal G}_{B,f_1}(C_8)$ and $\{H_5,H_6,H_7,H_8\} \subseteq {\cal G}_{B,f_2}(C_8)$. For every $H_i \in {\cal H}$, $\dim_A(H_i)=\dim_A(C_8)=3$. Moreover, $B$ is a simultaneous adjacency basis of ${\cal H}$, so $\Sd_A({\cal H})=3$.}
\label{FigSubFamilyC8}
\end{figure}

With the above notation in mind we can state our next result. 

\begin{theorem}\label{dimAPermFamily}
Any  adjacency basis $B$    of a graph $G$ is  a simultaneous adjacency generator for  any family of graphs ${\cal  H} \subseteq {\cal  G}_B(G)$. Moreover, if   $G\in {\cal  H}$, then
 $$\Sd_A({\cal  H})=\dim_A(G).$$
\end{theorem}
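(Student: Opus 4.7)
The plan is to show that, for any permutation $f \in \mathcal{S}(B)$ and any $G' \in \mathcal{G}_{B,f}(G)$, the set $B$ remains an adjacency generator for $G'$. Once this is established, $B$ is simultaneously an adjacency generator for every member of $\mathcal{G}_B(G)$, hence for any subfamily $\mathcal{H} \subseteq \mathcal{G}_B(G)$, which yields $\Sd_A(\mathcal{H}) \le |B| = \dim_A(G)$. The matching lower bound, when $G \in \mathcal{H}$, is immediate from Remark~\ref{trivialBoundsSimAdjDim}(i), since $\Sd_A(\mathcal{H}) \ge \dim_A(G)$.

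The key observation for the upper bound is a translation of adjacency in $G'$ back to adjacency in $G$ via $f$. For $s \in B$, the defining relation $N_{G'}(s) = f(N_G(s))$ gives
\begin{equation*}
u \sim_{G'} s \quad \Longleftrightarrow \quad f^{-1}(u) \in N_G(s) \quad \Longleftrightarrow \quad s \sim_G f^{-1}(u).
\end{equation*}
Because $f$ fixes $B$ pointwise, so does $f^{-1}$, and therefore $f^{-1}$ restricts to a bijection of $V-B$ onto itself. Given two distinct vertices $u,v \in V-B$, set $x = f^{-1}(u)$ and $y = f^{-1}(v)$; these are distinct vertices of $V-B$. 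Since $B$ is an adjacency basis of $G$, there exists $s \in B$ that is $G$-adjacent to exactly one of $x$ and $y$. By the equivalence above, this very $s$ is $G'$-adjacent to exactly one of $u$ and $v$, which is precisely what is needed for $B$ to be an adjacency generator of $G'$.

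I do not anticipate a real obstacle here; the argument is essentially bookkeeping about how the permutation $f$ relabels the neighbourhoods of vertices in $B$. The only subtlety worth stating explicitly is that the pair $(u,v) \in (V-B)^2$ is resolved in $G'$ by looking at the pre-image pair $(f^{-1}(u), f^{-1}(v))$ in $G$, which is legitimate because $f^{-1}$ preserves the set $V-B$; without this stabilizer property the argument would fail, since an $x \in B$ could be sent outside $B$ and spoil the resolution.

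Combining the two inequalities yields $\Sd_A(\mathcal{H}) = \dim_A(G)$ whenever $G \in \mathcal{H}$, completing the proof.
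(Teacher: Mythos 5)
Your proposal is correct and follows essentially the same route as the paper: both transfer a pair $u,v\in V-B$ in $G'$ back to the pair $f^{-1}(u),f^{-1}(v)\in V-B$ in $G$ (legitimately, since $f\in{\cal S}(B)$ fixes $B$ and hence preserves $V-B$) and use $N_{G'}(s)=f(N_G(s))$ for $s\in B$ to carry the resolving vertex over, then combine the resulting upper bound with the trivial lower bound $\Sd_A({\cal H})\ge\dim_A(G)$ when $G\in{\cal H}$. Your explicit remark about why the stabilizer property is needed is a nice clarification of a step the paper leaves implicit.
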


\begin{proof}
Assume that $B$ is an adjacency basis of a graph $G=(V,E)$.
Let $f\in {\cal S}(B)$ and let $G'=(V,E')$ such that $N_{G'}(x)=f(N_G(x))$, for every $x\in B$.
We will show that $B$ is an adjacency generator for any graph $G'$.  
To this end, we take two different vertices $u',v'\in V-B$ of $G'$ and the corresponding vertices $u,v\in V$ of $G$ such that $f(u)=u'$ and $f(v)=v'$. Since $u\ne v$ and $u,v\not\in B$, there exists $x\in B$ such that $d_{G ,2}(u,x)\ne d_{G ,2}(v,x)$. Now, since $N_{G'}(x)=f(N_G(x))=\{f(w):\; w\in N_G(x)\}$, we obtain that $d_{G',2}(u',x)=d_{G,2}(u,x)\ne d_{G,2}(u,x) = d_{G',2}(v',x)$.
Hence, $B$ is an adjacency generator for $G'$ and, in consequence, is also a simultaneous adjacency generator for ${\cal  H}$. Then we conclude that $\Sd_A({\cal  H})\le |B| =\dim_A(G)$ and, if  $G\in {\cal  H}$, then $\Sd_A({\cal  H})\ge   \dim_A(G)$. Therefore, the result follows.  
\end{proof}

Notice that if $G\not\in \{K_n,N_n\}$, then the edge set of  any graph $G'\in {\cal{G}}_B(G) $  can be partitioned into two sets $E_1$, $E_2$, where $E_1$ consists of all edges of $G$ having at least one vertex in $B$ and $E_2$ is a subset of edges of a complete graph whose vertex set is $V-B$. Hence, ${\cal{G}}_B(G)$ contains
$2^{\frac{|V-B|(|V-B|-1)}{2}}|V-B|!$ different labelled graphs. As an example of large graph families that may be obtained according to this procedure, consider the cycle graph $C_8$, where $\dim_A(C_8)=3$. For each adjacency basis $B$ of $C_8$, we have that $\vert {\cal G}_B(C_8) \vert = 122880$. To illustrate this, Figure~\ref{FigSubFamilyC8} shows a graph family ${\cal H}=\{H_1,\ldots,H_8\} \subseteq {\cal G}_B(C_8)$, where $B=\{v_1,v_3,v_7\}$, $\{H_1,H_2,H_3,H_4\} \subseteq {\cal G}_{B,f_1}(C_8)$ and $\{H_5,H_6,H_7,H_8\} \subseteq {\cal G}_{B,f_2}(C_8)$.

The following result follows directly from Theorem~\ref{dimAPermFamily} and the fact that $\dim_A(G)=1$ if and only if $G \in \{P_2,P_3,\overline{P}_2,\overline{P}_3\}$.

\begin{corollary}\label{CaseDim=2}
Let $G$ be a graph of order $n\ge 4$. If $\dim_A(G)=2$, then for any adjacency basis $B$ of  $G$ and  any non-empty subfamily ${\cal  H} \subseteq {\cal  G}_B(G)$,
 $$\Sd_A({\cal  H})=2.$$
\end{corollary}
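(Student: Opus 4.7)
The plan is to combine Theorem \ref{dimAPermFamily} (which gives the upper bound almost for free) with Remark \ref{trivialBoundsSimAdjDim}(i) and the stated characterization of graphs with adjacency dimension one (which will yield the matching lower bound).

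For the upper bound, I would invoke Theorem \ref{dimAPermFamily} directly: since $B$ is an adjacency basis of $G$, it is a simultaneous adjacency generator for any ${\cal H} \subseteq {\cal G}_B(G)$, hence $\Sd_A({\cal H}) \le |B| = \dim_A(G) = 2$. Note that the theorem's lower conclusion $\Sd_A({\cal H}) \ge \dim_A(G)$ cannot be applied here since we do not assume $G \in {\cal H}$, so the lower bound must come from elsewhere.

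For the lower bound, I would use Remark \ref{trivialBoundsSimAdjDim}(i), which gives $\Sd_A({\cal H}) \ge \max_{H \in {\cal H}} \dim_A(H)$. Since ${\cal H}$ is nonempty, pick any $H \in {\cal H} \subseteq {\cal G}_B(G)$. By construction every graph in ${\cal G}_B(G)$ is defined on the same vertex set $V$ as $G$, so $|V(H)| = n \ge 4$. The quoted characterization states that $\dim_A(H) = 1$ iff $H \in \{P_2, P_3, \overline{P}_2, \overline{P}_3\}$, all of order at most $3$; hence $\dim_A(H) \ge 2$. Combining with the upper bound gives $\Sd_A({\cal H}) = 2$.

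There is no real obstacle here; the only subtlety worth flagging is that one should not try to apply the equality part of Theorem \ref{dimAPermFamily} (which would require $G$ itself to lie in ${\cal H}$), and instead derive the lower bound from the order hypothesis $n \ge 4$ via the adjacency-dimension-one classification. Everything else is a one-line assembly of the two bounds.
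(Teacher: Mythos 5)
Your proposal is correct and matches the paper's argument: the paper also derives the upper bound from Theorem \ref{dimAPermFamily} and the lower bound from the fact that $\dim_A(H)=1$ forces $H\in\{P_2,P_3,\overline{P}_2,\overline{P}_3\}$, which is impossible when $n\ge 4$. Your explicit remark that the equality clause of Theorem \ref{dimAPermFamily} is unavailable without $G\in{\cal H}$ is a correct and worthwhile clarification of why the order hypothesis is needed.
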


The next result, obtained in \cite{Estrada-Moreno2014a}, shows that Corollary \ref{CaseDim=2} is  only applicable  to families of graphs of order $4,5$ or $6$.

\begin{remark}{\rm\cite{Estrada-Moreno2014a}}\label{remAd_1-3}
If $G$ is a graph of order $n\ge 7$, then $\dim_A(G)\ge 3$.
\end{remark}

Theorem \ref{dimAPermFamily} and Remark \ref{remAd_1-3} immediately lead to the next result.

\begin{theorem}
Let $B$  be an adjacency basis  of  a graph $G$ of order $n\ge 7$.  If  $\dim_A(G)=3$, then for any family ${\cal H}\subseteq {\cal  G}_B(G)$, $$\Sd_A({\cal H})=3.$$
\end{theorem}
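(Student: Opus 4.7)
The plan is to obtain the value $3$ by squeezing $\Sd_A({\cal H})$ between a matching upper and lower bound, each of which is essentially immediate from results already in hand.

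For the upper bound, I would invoke Theorem~\ref{dimAPermFamily} directly. Since $B$ is an adjacency basis of $G$ with $|B|=\dim_A(G)=3$, that theorem tells us $B$ is a simultaneous adjacency generator for every subfamily ${\cal H}\subseteq {\cal G}_B(G)$. Therefore $\Sd_A({\cal H})\le |B|=3$.

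For the lower bound, I would use Remark~\ref{trivialBoundsSimAdjDim}(i), which gives $\Sd_A({\cal H})\ge \max_{H\in{\cal H}}\{\dim_A(H)\}$. Since ${\cal H}$ is nonempty, pick any $H\in{\cal H}$; by construction $H$ shares its vertex set with $G$, so $H$ has order $n\ge 7$, and Remark~\ref{remAd_1-3} then forces $\dim_A(H)\ge 3$. Hence $\Sd_A({\cal H})\ge 3$.

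Combining the two inequalities yields $\Sd_A({\cal H})=3$, as required. There is essentially no obstacle here: the only thing worth flagging is that the statement tacitly assumes ${\cal H}$ is nonempty (otherwise the lower-bound step, which needs an actual graph in ${\cal H}$ to apply Remark~\ref{remAd_1-3}, would have no content); this matches the convention adopted in the analogous Corollary~\ref{CaseDim=2}. Note also that, unlike the ``$G\in{\cal H}$'' clause in Theorem~\ref{dimAPermFamily}, we do not need $G$ itself to belong to ${\cal H}$, because Remark~\ref{remAd_1-3} supplies the lower bound uniformly for every graph on the common vertex set of order at least $7$.
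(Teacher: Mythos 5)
Your proposal is correct and follows essentially the same route as the paper, which derives the result immediately from Theorem~\ref{dimAPermFamily} (giving $\Sd_A({\cal H})\le |B|=\dim_A(G)=3$ without needing $G\in{\cal H}$) together with Remark~\ref{remAd_1-3} (giving $\dim_A(H)\ge 3$ for every $H\in{\cal H}$, hence the matching lower bound via Remark~\ref{trivialBoundsSimAdjDim}). Your remark about ${\cal H}$ being implicitly nonempty is a fair, minor observation and does not change the argument.
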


The family ${\cal H}$ shown in Figure~\ref{FigSubFamilyC8} is an example of the previous result.

\section{Families of join graphs}
\label{sectFamJoins}

For a graph family  ${\cal H}=\{H_1,H_2,\ldots,H_{k}\}$, defined on common vertex set  $V$,   and the graph $K_1=\langle v \rangle$, $v \notin V$, we define the family $$K_1+{\cal H}=\{K_1+H_i:\;1 \leq i \leq k\}.$$
Notice that since for any   $H_i \in {\cal H}$ the graph $K_1+H_i$ has diameter two, $$\Sd(K_1+{\cal H})=\Sd_A(K_1+{\cal H}).$$

\begin{theorem}\label{relSimAdjDimK1JoinFamily}
Let ${\cal G}$ be a family of non-trivial graphs on a common vertex set $V$. If for every simultaneous adjacency basis $B$ of ${\cal G}$ there exist $G \in {\cal G}$ and $x\in V$ such that $B\subseteq N_{G}(x)$, then $$\Sd(K_1+{\cal G})=\Sd_A({\cal G})+1.$$
Otherwise, $$\Sd(K_1+{\cal G})=\Sd_A({\cal G}).$$
\end{theorem}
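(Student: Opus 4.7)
The plan is to exploit the fact, noted just above the theorem, that $\Sd(K_1+{\cal G})=\Sd_A(K_1+{\cal G})$, since every $K_1+G$ has diameter at most $2$. Writing $v$ for the vertex of $K_1$, I would begin by producing a criterion for $B\subseteq V\cup\{v\}$ to be a simultaneous adjacency generator of $K_1+{\cal G}$. The key observation is that in $K_1+G$ the vertex $v$ is adjacent to every vertex of $V$ (hence distinguishes no pair in $V$), while the adjacency relation restricted to $V$ coincides with that of $G$. This splits into two cases: (a) if $v\in B$, then $B$ is a simultaneous adjacency generator of $K_1+{\cal G}$ if and only if $B-\{v\}$ is a simultaneous adjacency generator of ${\cal G}$; (b) if $v\notin B$, then $B$ is such a generator if and only if $B$ is a simultaneous adjacency generator of ${\cal G}$ \emph{and} for every $G\in{\cal G}$ and every $x\in V$, $B\not\subseteq N_G(x)$ (the case $x\in B$ is automatic since $x\notin N_G(x)$; for $x\in V-B$ it expresses precisely that some $s\in B$ with $s\notin N_G(x)$ distinguishes $v$ from $x$ in $K_1+G$).

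These criteria immediately yield the universal upper bound $\Sd_A(K_1+{\cal G})\le\Sd_A({\cal G})+1$, obtained by taking $B_0\cup\{v\}$ for any simultaneous adjacency basis $B_0$ of ${\cal G}$ and applying (a); and the universal lower bound $\Sd_A(K_1+{\cal G})\ge\Sd_A({\cal G})$, since the intersection with $V$ of any simultaneous adjacency basis of $K_1+{\cal G}$ is a simultaneous adjacency generator of ${\cal G}$ by (a) or (b). For the ``Otherwise'' branch, the assumption furnishes a simultaneous adjacency basis $B_0$ of ${\cal G}$ satisfying $B_0\not\subseteq N_G(x)$ for every $G\in{\cal G}$ and $x\in V$; criterion (b) then makes $B_0$ itself a simultaneous adjacency generator of $K_1+{\cal G}$, so $\Sd_A(K_1+{\cal G})\le\Sd_A({\cal G})$ and equality follows.

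For the first conclusion I would argue by contradiction: assume $\Sd_A(K_1+{\cal G})=\Sd_A({\cal G})$ and let $B^\ast$ be a simultaneous adjacency basis of $K_1+{\cal G}$. If $v\in B^\ast$, criterion (a) would force $|B^\ast|\ge\Sd_A({\cal G})+1$, so instead $v\notin B^\ast$ and $|B^\ast|=\Sd_A({\cal G})$; then (b) says $B^\ast$ is a simultaneous adjacency basis of ${\cal G}$. The hypothesis now yields $G\in{\cal G}$ and $x\in V$ with $B^\ast\subseteq N_G(x)$. Since $x\notin N_G(x)$ we have $x\in V-B^\ast$, and every $s\in B^\ast$ lies in $N_G(x)$, whence $d_{K_1+G}(s,x)=1=d_{K_1+G}(s,v)$; thus no element of $B^\ast$ distinguishes $v$ from $x$, contradicting (b). Hence $\Sd_A(K_1+{\cal G})\ge\Sd_A({\cal G})+1$, matching the upper bound. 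I expect the main obstacle to be precisely this final contradiction: one must argue that the $x$ produced by the hypothesis lies in $V-B^\ast$ (which is where $x\notin N_G(x)$ is used), so that the pair $\{v,x\}$ falls under the adjacency-generation requirement, and then exploit the fact that only a vertex outside $N_G(x)$ can separate $v$ from $x$ in $K_1+G$.
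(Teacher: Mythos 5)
Your proposal is correct and follows essentially the same route as the paper: the upper bounds come from $B_0\cup\{v\}$ (resp.\ $B_0$ itself in the second case), and the extra vertex in the first case is forced because no element of a basis contained in $N_G(x)$ can separate $v$ from $x$, exactly the paper's argument phrased through the equivalence $\Sd(K_1+{\cal G})=\Sd_A(K_1+{\cal G})$. Your explicit iff criterion (cases (a) and (b)) is just a cleaner packaging of the same observations the paper uses with the truncated metric $d_{K_1+G,2}$, so no substantive difference or gap.
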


\begin{proof}
Let $V(K_1)=\{v_0\}$.
Suppose that for every simultaneous adjacency basis $B$ of ${\cal G}$ there exist $G \in {\cal G}$ and $x\in V$ such that $B\subseteq N_{G}(x)$. In this case, first notice that for every pair of different vertices $u,v \in V$ we have that  $d_{K_1+G,2}(u,v_0)=d_{K_1+G,2}(v,v_0)=1$, so $v_0$ does not distinguish any pair of vertices. In consequence, a simultaneous metric basis of $K_1+{\cal G}$ must contain at least as many vertices as a simultaneous adjacency basis of ${\cal G}$. Secondly, since $B \subseteq N_{K_1+G}(v_0)$ and $   B\subseteq N_{K_1+G}(x)$, a simultaneous metric basis of $K_1+{\cal G}$ must additionally contain some vertex $v \in (V-N_{G}(x))\cup\{v_0\}$, so $\Sd(K_1+{\cal G}) \geq \Sd_A({\cal G})+1$. Let $B$ be a simultaneous adjacency basis of ${\cal G}$ and let $B'=B\cup\{v_0\}$ and $G'\in {\cal G}$. For every pair of different vertices $u,v \in V(K_1+G')-B'$, there exists a vertex $z \in B\subset B'$ such that $d_{K_1+G',2}(u,z)=d_{G',2}(u,z) \neq d_{G',2}(v,z)=d_{K_1+G',2}(v,z)$, so $B'$ is a simultaneous metric generator for $K_1+{\cal G}$ and, as a result,  $\Sd(K_1+{\cal G}) \leq |B'| = |B|+1= \Sd_A({\cal G})+1$. Consequently, $\Sd(K_1+{\cal G})=\Sd_A({\cal G})+1$.

Now suppose that there exists a simultaneous adjacency basis $B$ of ${\cal G}$ such that $B \nsubseteq N_{G}(x)$ for  every $G \in {\cal G}$ and every $x\in V$. In this case, first recall that a simultaneous metric basis of $K_1+{\cal G}$ must contain as many vertices as a simultaneous adjacency basis of ${\cal G}$, so $\Sd(K_1+{\cal G}) \geq \Sd_A({\cal G})$. As above, for every pair of different vertices $u,v\in V-B$, there exists a vertex $z \in B$ such that $d_{K_1+G,2}(u,z)=d_{G,2}(u,z) \neq d_{G,2}(v,z)=d_{K_1+G,2}(v,z)$. Now, for any $u\in V-B$ there exists $u'\in B-N_{G}(u)$ such that $d_{K_1+G,2}(u,u')=2 \neq 1 =d_{K_1+G,2}(v_0,u')$. 
Hence, $B$ is also a simultaneous metric generator for $K_1+{\cal G}$ and, consequently $\Sd(K_1+{\cal G})\le |B|=\Sd_A({\cal G})$. Therefore, $\Sd(K_1+{\cal G})=\Sd_A({\cal G})$.
\end{proof}

Since $K_t+G=K_1+(K_{t-1}+G)$ for any  $t \ge 2$, the previous result can be generalized as follows.

\begin{corollary}\label{relSimAdjDimKtJoinFamily}
Let ${\cal G}$ be a family of non-trivial graphs on a common vertex set $V$ and let $K_t$ be a complete graph of order $t\ge 1$. If for every simultaneous adjacency basis $B$ of ${\cal G}$ there exist $G \in {\cal G}$ and $x\in V$ such that $B\subseteq N_{G}(x)$, then $$\Sd(K_t+{\cal G})=\Sd_A({\cal G})+t.$$
Otherwise, $$\Sd(K_t+{\cal G})=\Sd_A({\cal G})+t-1.$$
\end{corollary}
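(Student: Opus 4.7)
The plan is to proceed by induction on $t$. The base case $t=1$ is exactly Theorem~\ref{relSimAdjDimK1JoinFamily}. For the inductive step, exploit the identity $K_t + G = K_1 + (K_{t-1}+G)$ already noted by the authors: write $\mathcal{H} = K_{t-1} + \mathcal{G}$ so that $K_t + \mathcal{G} = K_1 + \mathcal{H}$, and apply Theorem~\ref{relSimAdjDimK1JoinFamily} to $\mathcal{H}$. Since every graph in $\mathcal{H}$ has diameter at most two, $\Sd_A(\mathcal{H}) = \Sd(\mathcal{H})$, so the induction hypothesis yields $\Sd_A(\mathcal{H}) = \Sd_A(\mathcal{G}) + (t-1)$ in the first case of the corollary and $\Sd_A(\mathcal{G}) + (t-2)$ in the ``otherwise'' case.

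The key step is to verify that the hypothesis of Theorem~\ref{relSimAdjDimK1JoinFamily} always holds for $\mathcal{H}$ when $t \ge 2$, so that the theorem adds exactly one to $\Sd_A(\mathcal{H})$. Writing $V(K_{t-1}) = \{w_1, \ldots, w_{t-1}\}$ and observing that $N_{K_{t-1}+G}(w_i) = V(K_{t-1}+G) - \{w_i\}$ for every $G \in \mathcal{G}$, let $B'$ be a simultaneous adjacency basis of $\mathcal{H}$. If some $w_j \notin B'$ then $B' \subseteq N_{K_{t-1}+G}(w_j)$ trivially, and the condition is witnessed by $x = w_j$. Otherwise $V(K_{t-1}) \subseteq B'$; since the $w_i$'s are adjacent to every other vertex and so cannot distinguish any pair in $V$, the intersection $B' \cap V$ is forced to be a simultaneous adjacency generator for $\mathcal{G}$, and minimality of $B'$ combined with the inductive formula $\Sd_A(\mathcal{H}) = (t-1) + \Sd_A(\mathcal{G})$ then forces $|B' \cap V| = \Sd_A(\mathcal{G})$, so $B' \cap V$ is a simultaneous adjacency basis of $\mathcal{G}$. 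The hypothesis of the corollary applied to $\mathcal{G}$ supplies $G \in \mathcal{G}$ and $x \in V$ with $B' \cap V \subseteq N_G(x)$; since $x \notin N_G(x)$ forces $x \notin B'$, we conclude $B' \subseteq N_G(x) \cup V(K_{t-1}) = N_{K_{t-1}+G}(x)$.

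The main obstacle is precisely the subcase $V(K_{t-1}) \subseteq B'$, which must be handled differently in the two branches of the corollary. In the ``otherwise'' branch the preceding argument is replaced by showing that the subcase cannot actually arise for a minimum $B'$: given any simultaneous adjacency basis $B_0$ of $\mathcal{G}$ not contained in any $N_G(x)$, the set $(V(K_{t-1}) - \{w_j\}) \cup B_0$ is a simultaneous adjacency generator for $\mathcal{H}$ of size $(t-2) + \Sd_A(\mathcal{G})$, strictly smaller than $|B'| \ge (t-1) + \Sd_A(\mathcal{G})$, contradicting minimality. With the hypothesis of Theorem~\ref{relSimAdjDimK1JoinFamily} thus verified for $\mathcal{H}$ in both branches, applying that theorem to $K_1 + \mathcal{H}$ and substituting the inductive formula yields $\Sd(K_t + \mathcal{G}) = \Sd_A(\mathcal{G}) + t$ in the first case and $\Sd_A(\mathcal{G}) + (t-1)$ in the second, completing the induction.
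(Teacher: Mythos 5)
Your proof is correct and follows essentially the same route as the paper, which derives the corollary by iterating Theorem~\ref{relSimAdjDimK1JoinFamily} through the identity $K_t+G=K_1+(K_{t-1}+G)$; the paper simply states this as immediate, while you supply the omitted verification that the ``$+1$'' branch of the theorem always applies to $K_{t-1}+{\cal G}$ when $t\ge 2$. That verification (the two cases according to whether $V(K_{t-1})\subseteq B'$, the minimality contradiction in the ``otherwise'' branch, and the use of $\Sd_A({\cal H})=\Sd({\cal H})$ for diameter-two members) is sound.
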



By Remark \ref{Remark_induced_subgraphs_inG'}  and 
 Theorems \ref{dimAPermFamily} and \ref{relSimAdjDimK1JoinFamily} we deduce the following result.

\begin{theorem}\label{ThmFamiliaK1+H}
Let $B$ be an adjacency basis of a graph $G$ and let ${\cal H}\subseteq {\cal  G}_B(G)$ such that $G\in {\cal  H}$. The following assertions hold:

\begin{enumerate}[{\rm (i)}]
\item If for any adjacency basis $B'$ of $G$, there exists $v\in V(G)$ such that $B' \subseteq  N_G(v)$, then $$\Sd(K_1+{\cal  H})= \dim_A(G)+1.$$

\item If $B\not \subseteq  N_G(v)$ for all $v\in V(G)$, then $$\Sd(K_1+{\cal  H})= \dim_A(G).$$
\end{enumerate} 
\end{theorem}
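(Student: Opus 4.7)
The plan is to combine Theorem~\ref{dimAPermFamily}, which asserts $\Sd_A(\mathcal{H})=\dim_A(G)$, with the dichotomy provided by Theorem~\ref{relSimAdjDimK1JoinFamily}. Under that dichotomy, the value of $\Sd(K_1+\mathcal{H})$ equals $\Sd_A(\mathcal{H})+1$ precisely when every simultaneous adjacency basis of $\mathcal{H}$ lies inside the open neighbourhood of some vertex in some member of $\mathcal{H}$, and equals $\Sd_A(\mathcal{H})$ otherwise. Hence both parts reduce to translating the hypotheses on $G$ and $B$ into the corresponding common-neighbour condition on $\mathcal{H}$.

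For part (i), I first observe that any simultaneous adjacency basis $B''$ of $\mathcal{H}$ satisfies $|B''|=\dim_A(G)$, and since $G\in\mathcal{H}$, $B''$ is an adjacency generator of $G$, hence an adjacency basis of $G$. The assumption then supplies some $v\in V(G)$ with $B''\subseteq N_G(v)$, which is the required condition with $G'=G$ and $x=v$. Applying Theorem~\ref{relSimAdjDimK1JoinFamily} yields $\Sd(K_1+\mathcal{H})=\dim_A(G)+1$.

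For part (ii), the strategy is to exhibit one simultaneous adjacency basis of $\mathcal{H}$ that violates the common-neighbour condition in every member of $\mathcal{H}$; the natural candidate is $B$ itself, which is a simultaneous adjacency basis by Theorem~\ref{dimAPermFamily}. Suppose, for the sake of contradiction, that $B\subseteq N_{G'}(x)$ for some $G'\in\mathcal{H}$ and some $x\in V$. Because $x\notin N_{G'}(x)$, necessarily $x\in V-B$. Pick $f\in\mathcal{S}(B)$ with $G'\in\mathcal{G}_{B,f}(G)$, so that $N_{G'}(y)=f(N_G(y))$ for every $y\in B$. For each $y\in B$, the relation $y\in N_{G'}(x)$ is equivalent to $x\in N_{G'}(y)=f(N_G(y))$, i.e.\ $f^{-1}(x)\in N_G(y)$, i.e.\ $y\in N_G(f^{-1}(x))$. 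Therefore $B\subseteq N_G(f^{-1}(x))$, contradicting the hypothesis of (ii). Theorem~\ref{relSimAdjDimK1JoinFamily} then gives $\Sd(K_1+\mathcal{H})=\dim_A(G)$.

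The main technical point is precisely the translation in part (ii): one has to trace how the adjacency between $B$ and $V-B$ is transformed when passing from $G$ to an arbitrary $G'\in\mathcal{G}_B(G)$. This is what Remark~\ref{Remark_induced_subgraphs_inG'} encodes via the isomorphism $\langle B_G\rangle_w\cong\langle B_{G'}\rangle_w$ realised by $f$; since $f$ fixes $B$ pointwise, the existence of a common neighbour of $B$ in $G'$ is equivalent to the existence of a common neighbour of $B$ in $G$, which is exactly how the dichotomy of Theorem~\ref{relSimAdjDimK1JoinFamily} is inherited from $G$ by the whole family $\mathcal{H}$.
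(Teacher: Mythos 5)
Your proof is correct and follows essentially the same route as the paper: combine Theorem~\ref{dimAPermFamily} (so $\Sd_A({\cal H})=\dim_A(G)$ and every simultaneous adjacency basis of ${\cal H}$ is an adjacency basis of $G$) with the dichotomy of Theorem~\ref{relSimAdjDimK1JoinFamily}. The only cosmetic difference is that in (ii) you verify directly from the definition of ${\cal G}_{B,f}(G)$ that a common neighbour of $B$ in some $G'\in{\cal H}$ would pull back to a common neighbour of $B$ in $G$, whereas the paper delegates exactly this preservation step to Remark~\ref{Remark_induced_subgraphs_inG'}.
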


\begin{proof}
First of all, by Theorem \ref{dimAPermFamily}, $\Sd_A({\cal  H})=\dim_A(G)$ and, as a consequence, every 
 simultaneous adjacency basis of ${\cal H}$, which is also a simultaneous metric basis, is an adjacency basis of $G$. 
Now, if for any adjacency basis $B'$ of $G$, there exists $v\in V(G)$ such that $B' \subseteq  N_G(v)$, then by 
 Theorem    \ref{relSimAdjDimK1JoinFamily}, 
$\Sd(K_1+{\cal  H})=\Sd_A({\cal  H})+1=\dim_A(G)+1.$ Therefore, (i) follows.
On the other hand,  if  $B\not \subseteq  N_G(v)$ for all $v\in V(G)$, then by 
Remark \ref{Remark_induced_subgraphs_inG'} we have that for any $G'\in {\cal  G}_B(G)$, $B\not \subseteq  N_{G'}(v)$ for all $v\in V(G)$. Hence, by Theorem    \ref{relSimAdjDimK1JoinFamily}, 
$\Sd(K_1+{\cal  H})=\Sd_A({\cal  H})=\dim_A(G)$. Therefore, the proof of  (ii) is complete.
\end{proof}

To show some particular cases of the results above, we will state the following two results. 

\begin{remark}{\rm \cite{JanOmo2012}}\label{RemarkPathCycle} For any integer $n\ge 4$,
$$\dim_A(P_n)=\dim_A(C_n)=\displaystyle\left\lfloor\frac{2n+2}{5}\right\rfloor.$$
\end{remark}

\begin{lemma}\label{LemmaDiameter>6orPathorCycle}
Let $G$ be a connected graph. If $D(G)\ge 6$ or $G\in \{P_n,C_n\}$ for $n\ge 7$, or $G$ is a graph of girth $\mathtt{g}(G)\ge 5$ and minimum degree $\delta(G)\ge 3$, then for every adjacency generator $B$ for $G$ and every $v\in V(G)$, $B\not \subseteq N_G(v).$
\end{lemma}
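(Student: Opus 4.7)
The plan is to prove the contrapositive: suppose some adjacency generator $B$ of $G$ satisfies $B \subseteq N_G(v)$ for a fixed $v \in V(G)$; under each of the three hypotheses I will exhibit two distinct vertices $u_1, u_2 \in V(G) \setminus B$ with $N_G(u_1) \cap B = N_G(u_2) \cap B$, contradicting that $B$ is an adjacency generator. The key preliminary observation is that $B \subseteq N_G(v)$ forces every vertex $u$ with $d_G(u, v) \geq 3$ to satisfy $N_G(u) \cap B = \emptyset$, since a neighbor $b \in B$ of $u$ would yield a path $v - b - u$ of length $2$. Consequently, any two distinct vertices of $V(G) \setminus B$ at distance $\geq 3$ from $v$ share the empty $B$-signature and supply the required contradiction.

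For $D(G) \geq 6$, I split on the eccentricity $e$ of $v$. If $e \geq 4$, a shortest path from $v$ to a vertex realising the eccentricity already exposes two vertices at distances $3$ and $4$ from $v$. If $e = 3$, then for any pair $x, y$ with $d_G(x, y) = D(G)$ the triangle inequality yields $6 \leq D(G) \leq d_G(x,v) + d_G(v,y) \leq 2e = 6$, forcing $d_G(x,v) = d_G(v,y) = 3$ and again producing two such vertices. The remaining case $e \leq 2$ is incompatible with $D(G) \geq 6$. The subcase $G = P_n$ with $n \geq 7$ is absorbed by this, since $D(P_n) = n - 1 \geq 6$; for $G = C_n$ with $7 \leq n \leq 11$ (whose diameter falls below $6$) I observe directly that the vertices three steps clockwise and three steps counterclockwise from $v$ along the cycle are distinct (as $n \geq 7$) and both lie at distance $3$ from $v$.

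For the girth/degree hypothesis $\mathtt{g}(G) \geq 5$ and $\delta(G) \geq 3$, the two witnesses will instead sit at distance $2$ from $v$. I fix any $v_i \in N_G(v)$; since $\delta(v_i) \geq 3$, vertex $v_i$ has at least two neighbors $u_1, u_2$ distinct from $v$. Neither $u_j$ can be adjacent to $v$, else the triangle $v v_i u_j$ would violate $\mathtt{g}(G) \geq 5$, so $u_1, u_2$ lie at distance $2$ from $v$ and hence outside $B$. Moreover, a further neighbor $v_\ell \ne v_i$ of some $u_j$ inside $N_G(v)$ would close the $4$-cycle $v v_i u_j v_\ell$, again violating $\mathtt{g}(G) \geq 5$. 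Hence $N_G(u_j) \cap N_G(v) = \{v_i\}$ for $j = 1, 2$, which gives $N_G(u_1) \cap B = N_G(u_2) \cap B = \{v_i\} \cap B$, completing the contradiction.

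The main obstacle is the $D(G) \geq 6$ case: a long geodesic in $G$ does not a priori guarantee two far-away vertices from the arbitrarily chosen $v$. The eccentricity bifurcation around $e = 3$ is the delicate point, and is essentially why the threshold in the hypothesis is $6 = 2 \cdot 3$ rather than a smaller value.
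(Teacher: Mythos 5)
Your proof is correct. The engine is the same one that drives the paper's argument---if $B\subseteq N_G(v)$, then two vertices outside $B$ with the same trace on $B$ contradict the adjacency-generator property---and your treatment of the girth/minimum-degree case is essentially identical to the paper's (two neighbours $u_1,u_2$ of a fixed neighbour of $v$, with $\mathtt{g}(G)\ge 5$ excluding the triangles and $4$-cycles that would let a vertex of $N_G[v]$ separate them). You diverge in the other two cases. For $D(G)\ge 6$ the paper argues via domination: since two vertices with no neighbour in $B$ would be indistinguishable, at most one vertex of $V(G)-B$ is undominated by $B\subseteq N_G(v)$, which forces $D(G)\le 5$; your eccentricity split ($e\ge 4$, $e=3$, $e\le 2$) reaches the same conclusion by directly exhibiting two vertices at distance at least $3$ from $v$ with empty $B$-trace, a mild reorganization of the same counting. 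For $P_n$ and $C_n$, $n\ge 7$, the paper instead invokes the known value $\dim_A(P_n)=\dim_A(C_n)=\left\lfloor\frac{2n+2}{5}\right\rfloor\ge 3$ together with maximum degree $2$, so that $B$ simply cannot fit inside any neighbourhood, whereas you absorb $P_n$ into the diameter case and settle $C_n$ for $7\le n\le 11$ by hand with the two vertices three steps on either side of $v$. Your route buys self-containedness (no appeal to the adjacency-dimension formula cited from the literature), at the cost of a slightly longer case analysis; the paper's is shorter but leans on that external result.
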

 
\begin{proof}
Let $B$ be an adjacency generator for $G$.  First, suppose that there exists $v\in V(G)$ such that  $B\subseteq N_G(v)$. Since 
 $B$ is an adjacency generator for  $G$,   either $B$ is a dominating set or there exists exactly one vertex $u\in V(G)-B$ which is not dominated by $B$. In the first case, $D(G)\le 4$ and in the second one, either  $D(G)\le 5$ or $u$ is an isolated vertex. 
Hence,   if $D(G)\ge 6$, then $B\not \subseteq N_G(v)$.

Now, assume that $\delta(G)\ge 3$.    Let $v\in V(G)$, $u\in N_G(v)$ and $x,y\in N_G(u)-\{v\}$. If $\mathtt{g}(G)\ge 5$, then no vertex $z\in N_G[v]$ distinguishes $x$ from $y$ and, since $B$ is an adjacency generator for $G$, there exists $z'\in B-N_G[v]$ which distinguishes them. Thus, $B\not \subseteq N_G(v)$. 

Finally, if $G\in \{P_n,C_n\}$ for $n\ge 7$, then by Remark \ref{RemarkPathCycle}  we have $|B|\ge \dim_A(G)=\displaystyle\left\lfloor\frac{2n+2}{5}\right\rfloor \ge 3$ and, since $G$ has maximum degree two, the result follows.    
\end{proof}

According to Lemma \ref{LemmaDiameter>6orPathorCycle},  Theorem \ref{relSimAdjDimK1JoinFamily} immediately leads to the following result. 

\begin{proposition}
Let ${\cal G}$ be a family of  graphs on a common vertex set $V$ of cardinality $|V|\ge 7$. If every $G\in {\cal G}$ is a path graph, or a cycle graph, or $D(G)\ge 6$, or   $\mathtt{g}(G)\ge 5$ and   $\delta(G)\ge 3$, then   $$\Sd(K_1+{\cal G})=\Sd_A({\cal G}).$$
\end{proposition}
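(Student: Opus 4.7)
The plan is to reduce directly to Theorem \ref{relSimAdjDimK1JoinFamily} by showing that the family ${\cal G}$ falls into its second (``otherwise'') case. That is, I would aim to exhibit a simultaneous adjacency basis $B$ of ${\cal G}$ such that $B \nsubseteq N_G(x)$ for every $G \in {\cal G}$ and every $x \in V$; in fact, the argument will show that \emph{every} simultaneous adjacency basis works, which is even stronger than needed.

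First I would fix an arbitrary simultaneous adjacency basis $B$ of ${\cal G}$. By definition, $B$ is in particular an adjacency generator for each individual member $G \in {\cal G}$. Next, for each $G \in {\cal G}$ the hypothesis guarantees that $G$ satisfies one of the four structural conditions appearing in Lemma~\ref{LemmaDiameter>6orPathorCycle}: it is a path $P_n$, or a cycle $C_n$ (with $n = |V| \ge 7$ by hypothesis), or has $D(G) \ge 6$, or has $\mathtt{g}(G) \ge 5$ together with $\delta(G) \ge 3$. I would then apply Lemma~\ref{LemmaDiameter>6orPathorCycle} to $G$ with the adjacency generator $B$, obtaining $B \nsubseteq N_G(v)$ for every $v \in V$. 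Since $G$ was arbitrary in ${\cal G}$, this conclusion holds simultaneously for every member of the family.

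Finally, I would invoke Theorem~\ref{relSimAdjDimK1JoinFamily}: because we are \emph{not} in the situation where every simultaneous adjacency basis of ${\cal G}$ is contained in some $N_G(x)$ for some $G \in {\cal G}$, the theorem yields $\Sd(K_1 + {\cal G}) = \Sd_A({\cal G})$, as claimed.

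There is essentially no technical obstacle, since all heavy lifting has already been performed in Lemma~\ref{LemmaDiameter>6orPathorCycle} and Theorem~\ref{relSimAdjDimK1JoinFamily}; the only thing to be careful about is that the order hypothesis $|V| \ge 7$ is exactly what allows the path/cycle clause of Lemma~\ref{LemmaDiameter>6orPathorCycle} to apply uniformly across the family, and that the lemma's conclusion must be quantified over every simultaneous adjacency basis (which is automatic, since any such basis is an adjacency generator for each $G \in {\cal G}$).
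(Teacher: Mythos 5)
Your proposal is correct and follows exactly the paper's intended route: the paper derives this proposition directly from Lemma~\ref{LemmaDiameter>6orPathorCycle} (applied to each $G\in{\cal G}$, since any simultaneous adjacency basis is an adjacency generator for each member) together with the ``otherwise'' case of Theorem~\ref{relSimAdjDimK1JoinFamily}. No gaps; your remark that every simultaneous adjacency basis, not just some, satisfies the non-containment condition matches the paper's reasoning.
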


Theorem \ref{ThmFamiliaK1+H} and Lemma \ref{LemmaDiameter>6orPathorCycle}   immediately lead to the following result. 

\begin{proposition}
Let $G$ be a graph of order $n\ge 7$ and let $B$ be an adjacency basis of $G$. If $G$ is a path graph, a cycle graph, $D(G)\ge 6$, or $\mathtt{g}(G) \ge 5$ and $\delta(G) \ge 3$, then for any family ${\cal H} \subseteq {\cal G}_B(G)$ such that $G\in {\cal H} $,  $$\Sd(K_1+{\cal H})=\dim_A(G).$$
\end{proposition}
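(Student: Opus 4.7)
The plan is to derive the conclusion by chaining the two already-established results cited after the statement. Because $B$ is assumed to be an adjacency basis of $G$, it is in particular an adjacency generator for $G$, so the hypotheses of Lemma~\ref{LemmaDiameter>6orPathorCycle} are in force whenever $G$ is a path of order $n\ge 7$, a cycle of order $n\ge 7$, a graph of diameter $D(G)\ge 6$, or a graph with $\mathtt{g}(G)\ge 5$ and $\delta(G)\ge 3$. From that lemma I would conclude that for every vertex $v\in V(G)$, the basis $B$ is not contained in the open neighbourhood $N_G(v)$. This is the only non-trivial geometric input needed.

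Once the property $B\not\subseteq N_G(v)$ has been verified for all $v\in V(G)$, the conclusion is immediate from part~(ii) of Theorem~\ref{ThmFamiliaK1+H}: under exactly that hypothesis on some adjacency basis $B$ of $G$, every family $\mathcal{H}\subseteq \mathcal{G}_B(G)$ with $G\in\mathcal{H}$ satisfies $\Sd(K_1+\mathcal{H})=\dim_A(G)$. So I would simply invoke Theorem~\ref{ThmFamiliaK1+H}(ii) with the given $B$ and $\mathcal{H}$ to close the argument.

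There is no genuine obstacle here: the work has been factored out into the two auxiliary results, and the proposition is essentially a packaging lemma combining them. The only point requiring any care is the quantifier structure; Theorem~\ref{ThmFamiliaK1+H}(ii) needs the existence of \emph{some} adjacency basis $B$ of $G$ with $B\not\subseteq N_G(v)$ for all $v$, while Lemma~\ref{LemmaDiameter>6orPathorCycle} actually gives the stronger statement that \emph{every} adjacency generator of $G$ avoids being contained in any $N_G(v)$. Hence the specific basis $B$ fixed in the statement of the proposition qualifies, and the invocation of Theorem~\ref{ThmFamiliaK1+H}(ii) is legitimate for precisely this $B$ and the family $\mathcal{H}\subseteq\mathcal{G}_B(G)$ given in the hypothesis.
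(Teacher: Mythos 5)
Your proposal is correct and matches the paper's argument exactly: the paper derives this proposition as an immediate consequence of Lemma~\ref{LemmaDiameter>6orPathorCycle} (giving $B\not\subseteq N_G(v)$ for every $v$, since $B$ is in particular an adjacency generator) combined with Theorem~\ref{ThmFamiliaK1+H}(ii). Your remark on the quantifier structure is a fair point of care, but it introduces no divergence from the paper's route.
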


We now discuss particular cases where $\Sd(K_1+{\cal G})=\Sd_A({\cal G})+1$. First, consider a graph family ${\cal G}=\{G_1,G_2,\ldots,G_k\}$, defined on a common vertex set of cardinality $n$, such that $G_i \cong K_n$ for some $i \in \{1,\ldots,k\}$. Since $K_1+K_n=K_{n+1}$, we have that $\Sd(K_1+{\cal G})=n=\Sd_A({\cal G})+1$. Now recall the families ${\cal K}(V)$ of star graphs defined in Section~\ref{sectBasicResults}. The following result holds.

\begin{proposition}\label{partCaseK1PlusStars}
For every finite set $V$ of size $|V|\ge 4$, $\Sd(K_1 + {\cal K}(V))=\Sd_A({\cal K}(V))+1$.
\end{proposition}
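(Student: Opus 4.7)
The plan is to apply Theorem~\ref{relSimAdjDimK1JoinFamily} to the family ${\cal K}(V)$ by checking its first hypothesis: every simultaneous adjacency basis $B$ of ${\cal K}(V)$ must be contained in a common neighbourhood of some vertex in some member of the family. Since Corollary~\ref{partCaseStars} already gives $\Sd_A({\cal K}(V))=|V|-1$, every simultaneous adjacency basis has exactly $|V|-1$ vertices, so it must be of the form $B=V-\{v_i\}$ for some index $i$.

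Given such a $B$, I would single out the member $K_{1,n-1}^i \in {\cal K}(V)$, which by definition has $v_i$ as its central vertex and all vertices of $V-\{v_i\}$ as its leaves. Then $N_{K_{1,n-1}^i}(v_i)=V-\{v_i\}=B$, which immediately yields $B\subseteq N_{K_{1,n-1}^i}(v_i)$. This verifies the hypothesis of Theorem~\ref{relSimAdjDimK1JoinFamily} for every simultaneous adjacency basis of ${\cal K}(V)$, and the desired equality $\Sd(K_1+{\cal K}(V))=\Sd_A({\cal K}(V))+1$ follows at once.

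The only point that requires a brief sanity check is that sets of the form $V-\{v_i\}$ are indeed simultaneous adjacency generators for ${\cal K}(V)$ (so that they really are bases given the known size), but this is immediate: for every $K_{1,n-1}^j$, the complement $V-B=\{v_i\}$ has only one vertex, so there is no pair of vertices outside $B$ to distinguish, and the adjacency generator condition is satisfied vacuously. There is no real obstacle here; the proposition is a direct consequence of Corollary~\ref{partCaseStars} combined with Theorem~\ref{relSimAdjDimK1JoinFamily}.
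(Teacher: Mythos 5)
Your proposal is correct and follows essentially the same route as the paper: both identify that every simultaneous adjacency basis of ${\cal K}(V)$ must have the form $V-\{v_i\}$ (via Corollary~\ref{partCaseStars}), observe that $B\subseteq N_{K_{1,n-1}^i}(v_i)$, and then invoke Theorem~\ref{relSimAdjDimK1JoinFamily}. The extra sanity check that such sets are vacuously adjacency generators is harmless but not needed for the argument.
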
 

\begin{proof}
Every simultaneous adjacency basis $B$ of ${\cal K}(V)$ has the form $V-\{v_i\}$, $i \in \{1,\ldots,n\}$. In $K_{1,n-1}^i$, we have that $B \subseteq N_{K_{1,n-1}^i}(v_i)$, so the result is deduced by Theorem~\ref{relSimAdjDimK1JoinFamily}.
\end{proof}

For two graph families ${\cal G}=\{G_1,G_2,\ldots,G_{k_1}\}$ and ${\cal H}=\{H_1,H_2,\ldots,H_{k_2}\}$, defined on common vertex sets $V_1$ and $V_2$, respectively, such that $V_1 \cap V_2 = \emptyset$, we define the family $${\cal G}+{\cal H}=\{G_i+H_j:\;1 \leq i \leq k_1, 1 \leq j \leq k_2\}
.$$
Notice that, since for any $G_i \in {\cal G}$ and   $H_j \in {\cal H}$ the graph $G_i+H_j$ has diameter two, $$\Sd({\cal G}+{\cal H})=\Sd_A({\cal G}+{\cal H}).$$

\begin{theorem}\label{SimAdjDimJoinsFamilyCase1}
Let ${\cal G}$ and ${\cal H}$ be two families of non-trivial graphs on common vertex sets $V_1$ and $V_2$, respectively. If there exists a simultaneous adjacency basis $B$ of ${\cal G}$ such that for every  $G \in {\cal G}$ and every $g\in V_1$, $B\not\subseteq N_G(g)$,   
 then $$\Sd({\cal G}+{\cal H})=\Sd_A({\cal G})+\Sd_A({\cal H}).$$
\end{theorem}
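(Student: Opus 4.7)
The plan is to prove both inequalities $\Sd(\mathcal{G}+\mathcal{H})\ge \Sd_A(\mathcal{G})+\Sd_A(\mathcal{H})$ and $\Sd(\mathcal{G}+\mathcal{H})\le \Sd_A(\mathcal{G})+\Sd_A(\mathcal{H})$, using the identity $\Sd(\mathcal{G}+\mathcal{H})=\Sd_A(\mathcal{G}+\mathcal{H})$ noted above (each $G_i+H_j$ has diameter at most $2$).

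For the lower bound, I would take any simultaneous adjacency basis $W$ of $\mathcal{G}+\mathcal{H}$ and decompose it as $W_1=W\cap V_1$, $W_2=W\cap V_2$. The claim is that $W_1$ is a simultaneous adjacency generator for $\mathcal{G}$ and $W_2$ one for $\mathcal{H}$. To see the first, pick distinct $u,v\in V_1-W_1$ and any $G\in\mathcal{G}$, $H\in\mathcal{H}$. Some $s\in W$ distinguishes $u,v$ in $G+H$; but every vertex of $V_2$ is adjacent to both $u$ and $v$ in $G+H$, hence such $s$ must lie in $W_1$, and for $s\in V_1$ one has $s\sim u$ in $G+H$ iff $s\sim u$ in $G$, so $s$ distinguishes $u,v$ in $G$. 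The symmetric argument handles $W_2$. This yields $|W|=|W_1|+|W_2|\ge \Sd_A(\mathcal{G})+\Sd_A(\mathcal{H})$. Note that this direction does not use the hypothesis on $B$.

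For the upper bound, let $B$ be the hypothesised simultaneous adjacency basis of $\mathcal{G}$ (with $B\not\subseteq N_G(g)$ for all $G\in\mathcal{G}$, $g\in V_1$), and let $B'$ be a simultaneous adjacency basis of $\mathcal{H}$. I would show $B\cup B'$ is a simultaneous adjacency generator for $\mathcal{G}+\mathcal{H}$ by a three-case analysis on a pair of distinct $u,v\in(V_1\cup V_2)-(B\cup B')$. If both $u,v\in V_1$, an element of $B$ works (its adjacencies to $V_1$ are inherited from $G$). If both $u,v\in V_2$, an element of $B'$ works similarly. The main step is the mixed case $u\in V_1-B$, $v\in V_2-B'$: every $s\in V_2$ is adjacent to $u$ and every $s\in B$ is adjacent to $v$, so $s$ distinguishes $u,v$ in $G+H$ precisely when $s\in B$ and $s\not\sim u$ in $G$. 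The hypothesis $B\not\subseteq N_G(u)$ guarantees the existence of such $s$. Hence $B\cup B'$ is a simultaneous adjacency generator and $\Sd_A(\mathcal{G}+\mathcal{H})\le |B|+|B'|=\Sd_A(\mathcal{G})+\Sd_A(\mathcal{H})$.

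The only non-routine step is the mixed case in the upper bound; everything else is a mechanical verification from the definition of the join and from the fact that $d_{G+H,2}$ on pairs inside $V_i$ coincides with $d_{G,2}$ or $d_{H,2}$, respectively. The hypothesis on $B$ is used in exactly one place, namely to produce a non-neighbour of $u\in V_1$ inside $B$, which is what guarantees that mixed pairs can be resolved without adding any extra vertex to $B\cup B'$.
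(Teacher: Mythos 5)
Your proof is correct and follows essentially the same route as the paper: the same splitting of a basis of ${\cal G}+{\cal H}$ into $W\cap V_1$ and $W\cap V_2$ for the lower bound, and the same $B\cup B'$ construction with the mixed case resolved by a non-neighbour of $u$ inside $B$ for the upper bound. Phrasing everything through $\Sd({\cal G}+{\cal H})=\Sd_A({\cal G}+{\cal H})$ rather than directly with $d_{G+H,2}$ is only a cosmetic difference.
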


\begin{proof}
Let $B$ be a simultaneous adjacency basis of ${\cal G}$ such that $B\nsubseteq N_G(u)$ for every $u\in V_1$, and let  $B'$ be a simultaneous adjacency basis of ${\cal H}$. We claim that the set $S=B \cup B'$ is a simultaneous metric generator for ${\cal G+H}$. Consider a pair of different vertices $u,v\in (V_1 \cup V_2)-S$. If $u,v \in V_1$, then there exists $x \in B$ that distinguishes them in every $G \in {\cal G}$. An analogous situation occurs for $u,v \in V_2$. If $u \in V_1$ and $v \in V_2$, since $B\not\subseteq N_G(u)$, there exists $x \in B$ such that $d_{G+H,2}(u,x)=2 \neq 1=d_{G+H,2}(v,x)$ for every $G\in {\cal G}$ and  $H\in {\cal H}$. Thus, $S$ is a simultaneous metric generator for ${\cal G}+{\cal H}$ and, as a consequence, $\Sd({\cal G}+{\cal H}) \leq \vert S \vert = \vert B \vert + \vert B' \vert = \Sd_A({\cal G})+\Sd_A({\cal H})$.

To prove that $\Sd({\cal G}+{\cal H}) \geq \Sd_A({\cal G})+\Sd_A({\cal H})$, consider a simultaneous metric basis $W$ of ${\cal G}+{\cal H}$. Let $W_1=W \cap V_1$ and let $W_2=W\cap V_2$. Let $G\in {\cal G}$ and $H\in {\cal H}$. No pair of different vertices $u,v \in V_2-W_2$ is distinguished in $G+H$ by any vertex from $W_1$, whereas no pair of different vertices $u,v \in V_1-W_1$ is distinguished in $G+H$ by any vertex from $W_2$, so $W_1$ is a simultaneous adjacency generator for ${\cal G}$ and $W_2$ is a simultaneous adjacency generator for ${\cal H}$. Thus, $\Sd({\cal G}+{\cal H})=|W|= \vert W_1 \vert + \vert W_2 \vert \geq \Sd_A({\cal G})+\Sd_A({\cal H})$.
\end{proof}
By Lemma \ref{LemmaDiameter>6orPathorCycle} we deduce the following   consequence  of Theorem \ref{SimAdjDimJoinsFamilyCase1}.

\begin{corollary}
Let ${\cal G}$ be a family of  graphs on a common vertex set $V$ of cardinality $|V|\ge 7$. If every $G\in {\cal G}$ is a path graph, a cycle graph, $D(G)\ge 6$, or $\mathtt{g}(G) \ge 5$ and $\delta(G) \ge 3$, then for any family ${\cal H} $ of non-trivial graphs on a common vertex set,  $$\Sd({\cal G + H})=\Sd_A({\cal G})+\Sd_A({\cal H}).$$
\end{corollary}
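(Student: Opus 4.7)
The plan is to combine Theorem~\ref{SimAdjDimJoinsFamilyCase1} with Lemma~\ref{LemmaDiameter>6orPathorCycle} in an essentially mechanical way, since the corollary is just the two results composed. The only thing one really has to check is that the hypotheses line up correctly across the two statements.

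First, I would fix an arbitrary simultaneous adjacency basis $B$ of ${\cal G}$ (it exists because each $G\in{\cal G}$ is a non-trivial graph on a vertex set of size at least $7$, so $\Sd_A({\cal G})$ is well-defined and finite). By definition of simultaneous adjacency basis, $B$ is an adjacency generator for every individual graph $G\in{\cal G}$. Next, I would fix any $G\in{\cal G}$ and note that by hypothesis $G$ falls into (at least) one of the four classes: (a) $G$ is a path $P_n$ with $n\ge 7$, (b) $G$ is a cycle $C_n$ with $n\ge 7$, (c) $D(G)\ge 6$, or (d) $\mathtt{g}(G)\ge 5$ and $\delta(G)\ge 3$. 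Each of these is one of the sufficient conditions appearing in Lemma~\ref{LemmaDiameter>6orPathorCycle}, so that lemma applies with the adjacency generator $B$ to yield $B\not\subseteq N_G(v)$ for every $v\in V$.

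Since this holds for every $G\in{\cal G}$ with the same $B$, the chosen simultaneous adjacency basis $B$ satisfies the hypothesis of Theorem~\ref{SimAdjDimJoinsFamilyCase1}, namely that there exists a simultaneous adjacency basis $B$ of ${\cal G}$ with $B\not\subseteq N_G(u)$ for every $G\in{\cal G}$ and every $u\in V$. Applying Theorem~\ref{SimAdjDimJoinsFamilyCase1} to the pair of families ${\cal G}$ and ${\cal H}$ immediately gives $\Sd({\cal G}+{\cal H})=\Sd_A({\cal G})+\Sd_A({\cal H})$, which is the desired conclusion.

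There is no genuine obstacle here: the proof is a one-line composition once the hypotheses are unpacked. The only minor subtlety to mention is that the condition $|V|\ge 7$ is exactly what is needed so that the path/cycle cases of Lemma~\ref{LemmaDiameter>6orPathorCycle} (which require $n\ge 7$) are available uniformly, and also so that \emph{every} $G\in{\cal G}$ is covered by Lemma~\ref{LemmaDiameter>6orPathorCycle}. No explicit hypothesis on ${\cal H}$ beyond non-triviality is invoked because Theorem~\ref{SimAdjDimJoinsFamilyCase1} already takes care of that side.
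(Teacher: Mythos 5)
Your proof is correct and follows exactly the paper's route: the paper states this corollary as an immediate consequence of Lemma~\ref{LemmaDiameter>6orPathorCycle} (applied to each $G\in{\cal G}$ with a fixed simultaneous adjacency basis $B$, which is an adjacency generator for every member) combined with Theorem~\ref{SimAdjDimJoinsFamilyCase1}. Your hypothesis-checking, including the role of $|V|\ge 7$ for the path/cycle cases, matches the intended argument.
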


Theorems \ref{dimAPermFamily} and  \ref{SimAdjDimJoinsFamilyCase1} and Lemma \ref{LemmaDiameter>6orPathorCycle} lead to the next result.
 
\begin{theorem}
Let $G$ be a graph of order $n\ge 7$ and let $B$ be an adjacency basis of $G$. If $G$ is a path graph, or a cycle graph, or $D(G)\ge 6$, or   $\mathtt{g}(G)\ge 5$ and   $\delta(G)\ge 3$, then for any family ${\cal G}_1 \subseteq {\cal G}_B(G)$ such that $G\in {\cal G}_1 $  and any family ${\cal H} $ of non-trivial graphs on a common vertex set, $$\Sd({\cal G}_1+{\cal H})=\dim_A(G)+\Sd_A({\cal H}).$$
\end{theorem}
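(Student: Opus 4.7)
The plan is to chain together three earlier results: Theorem \ref{dimAPermFamily} (to compute $\Sd_A({\cal G}_1)$), Theorem \ref{SimAdjDimJoinsFamilyCase1} (to reduce $\Sd({\cal G}_1+{\cal H})$ to a sum of two simultaneous adjacency dimensions), and Lemma \ref{LemmaDiameter>6orPathorCycle} (to supply the hypothesis needed for Theorem \ref{SimAdjDimJoinsFamilyCase1}). First, since $G\in {\cal G}_1\subseteq {\cal G}_B(G)$ and $B$ is an adjacency basis of $G$, Theorem \ref{dimAPermFamily} gives that $B$ is itself a simultaneous adjacency basis of ${\cal G}_1$ and $\Sd_A({\cal G}_1)=\dim_A(G)$.

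Next I would apply Theorem \ref{SimAdjDimJoinsFamilyCase1} to the pair of families ${\cal G}_1$ and ${\cal H}$, which, once its hypothesis is verified, yields
$$\Sd({\cal G}_1+{\cal H})=\Sd_A({\cal G}_1)+\Sd_A({\cal H})=\dim_A(G)+\Sd_A({\cal H}),$$
as desired. The hypothesis to check is that $B$, already known to be a simultaneous adjacency basis of ${\cal G}_1$, additionally satisfies $B\not\subseteq N_{G'}(v)$ for every $G'\in {\cal G}_1$ and every $v\in V(G')$.

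For the distinguished graph $G$ itself, this is immediate from Lemma \ref{LemmaDiameter>6orPathorCycle}, whose four alternative structural assumptions (path, cycle, $D(G)\ge 6$, or $\mathtt{g}(G)\ge 5$ together with $\delta(G)\ge 3$) match exactly the hypotheses of the statement we wish to prove. The main (though modest) obstacle is transferring this property from $G$ to every other member $G'\in {\cal G}_1\subseteq {\cal G}_B(G)$. Here I would unfold the definition of ${\cal G}_B(G)$: such a $G'$ lies in some ${\cal G}_{B,f}(G)$ with $f\in {\cal S}(B)$, so $N_{G'}(x)=f(N_G(x))$ for every $x\in B$. If $B\subseteq N_{G'}(v)$ for some $v\in V(G')$, then $v\in f(N_G(x))$ for every $x\in B$, and applying $f^{-1}$ (well defined since $f$ is a permutation of $V$ fixing $B$ pointwise) yields $f^{-1}(v)\in N_G(x)$ for every $x\in B$, i.e.\ $B\subseteq N_G(f^{-1}(v))$, contradicting Lemma \ref{LemmaDiameter>6orPathorCycle} applied to $G$ itself. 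This transfer step is essentially a specialization of Remark \ref{Remark_induced_subgraphs_inG'}, which guarantees that $f$ is an isomorphism of the weakly induced subgraphs $\langle B_G\rangle_w$ and $\langle B_{G'}\rangle_w$, and therefore preserves the property of $B$ being contained in some open neighborhood. Once this verification is in place, combining the three results yields the stated equality.
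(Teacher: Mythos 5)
Your proposal is correct and follows essentially the same route as the paper, which states this theorem as a direct consequence of Theorems \ref{dimAPermFamily} and \ref{SimAdjDimJoinsFamilyCase1} together with Lemma \ref{LemmaDiameter>6orPathorCycle}. Your explicit transfer of the property $B\not\subseteq N_{G'}(v)$ from $G$ to every $G'\in{\cal G}_B(G)$ via the defining condition $N_{G'}(x)=f(N_G(x))$ is exactly the step the paper handles through Remark \ref{Remark_induced_subgraphs_inG'} (as in the proof of Theorem \ref{ThmFamiliaK1+H}), so nothing is missing.
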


The ideas introduced in Theorem~\ref{dimAPermFamily} allow us to define large families composed by subgraphs of a join graph $G+H$, which may be seen as the result of a relaxation of the join operation, in the sense that not every pair of nodes $u \in V(G)$, $v \in V(H)$, must be linked by an edge, yet any adjacency basis of $G+H$ is a simultaneous adjacency generator for the family, and thus a simultaneous metric generator. Since for any adjacency basis $B$ of $G+H$, the family ${\cal R}_B$ defined in the next result is a subfamily of ${\cal G}_B(G+H)$, the result follows directly from Theorem \ref{dimAPermFamily}.

\begin{corollary}\label{famJoinsEdgesRemoved}
Let $G$ and $H$ be two non-trivial graphs and let $B$ be an adjacency basis of $G+H$. Let $E'=\{uv \in E(G+H) :\; u \in V(G)-B,\;  v \in V(H)-B\}$ and let ${\cal R}_B=\{R_1,R_2,\ldots,R_k\}$ be a graph family, defined on the common vertex set $V(G+H)$, such that, for every $i \in \{1,\ldots,k\}$, $E(R_i)=E(G+H)-E_i$, for some edge subset  $E_i \subseteq E'$. Then $$\Sd({\cal R}_B) \leq \dim(G+H).$$
\end{corollary}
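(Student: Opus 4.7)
My plan is to verify the inclusion ${\cal R}_B \subseteq {\cal G}_B(G+H)$ hinted at in the statement, and then invoke Theorem~\ref{dimAPermFamily} together with the diameter-two observation to convert the resulting adjacency bound into a bound on the simultaneous metric dimension. So the proof will have essentially three short steps, and the only work is to confirm that the membership in ${\cal G}_B(G+H)$ really holds for every $R_i$.

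First, I would fix any $R_i \in {\cal R}_B$ and take the identity permutation $f=\Id \in {\cal S}(B)$. By hypothesis, $E(R_i)=E(G+H)-E_i$ with $E_i \subseteq E'$, and every edge in $E'$ has both endpoints outside $B$. Consequently, no edge incident to a vertex of $B$ is deleted, so $N_{R_i}(x)=N_{G+H}(x)=\Id(N_{G+H}(x))$ for every $x\in B$. By the definition of ${\cal G}_{B,\Id}(G+H)$, this gives $R_i \in {\cal G}_{B,\Id}(G+H)\subseteq {\cal G}_B(G+H)$, so ${\cal R}_B \subseteq {\cal G}_B(G+H)$.

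Next, applying Theorem~\ref{dimAPermFamily} to the adjacency basis $B$ of $G+H$ and to the subfamily ${\cal R}_B \subseteq {\cal G}_B(G+H)$, I obtain that $B$ is a simultaneous adjacency generator for ${\cal R}_B$, and hence
\[
\Sd_A({\cal R}_B) \le |B| = \dim_A(G+H).
\]
Since every simultaneous adjacency generator is a simultaneous metric generator (by Remark~\ref{trivialBoundsSimAdjDim}(ii), or directly because $d_{R_i,2}(u,v)\ne d_{R_i,2}(v,x)$ implies $d_{R_i}(u,x)\ne d_{R_i}(v,x)$), this yields $\Sd({\cal R}_B)\le \dim_A(G+H)$. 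Finally, $G+H$ has diameter at most $2$, so $\dim_A(G+H)=\dim(G+H)$, and the desired inequality $\Sd({\cal R}_B)\le \dim(G+H)$ follows.

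The only potential pitfall is the first step: one must be careful that the restriction $E_i\subseteq E'$ is exactly what guarantees that neighbourhoods of vertices in $B$ are preserved, so that the identity witnesses membership in ${\cal G}_B(G+H)$. Once that is observed, the rest is an immediate concatenation of Theorem~\ref{dimAPermFamily}, the inequality $\Sd\le \Sd_A$, and the diameter-two identity $\dim_A=\dim$.
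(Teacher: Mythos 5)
Your proposal is correct and follows essentially the same route as the paper, which simply observes that ${\cal R}_B$ is a subfamily of ${\cal G}_B(G+H)$ and invokes Theorem~\ref{dimAPermFamily}; you merely spell out the verification (via the identity permutation) that the paper leaves implicit, together with the standard facts $\Sd\le\Sd_A$ and $\dim_A(G+H)=\dim(G+H)$ for diameter-two graphs.
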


As the next result shows, it is possible to obtain families composed by join graphs of the form $G'+H'$, where $G'$ and $H'$ are the result of applying modifications to $G$ and $H$, respectively, in such a way that any adjacency basis of $G+H$ is a simultaneous adjacency generator for the family, and thus a simultaneous metric generator.

\begin{corollary}\label{famJoinsFactorsModified}
Let $G$ and $H$ be two non-trivial graphs and let $B$ be an adjacency basis of $G+H$. Let $B_1=B\cap V(G)$ and $B_2=B\cap V(H)$. Then for any family ${\cal H}\subseteq {\cal G}_{B_1}(G)+{\cal G}_{B_2}(H)$,
$$\Sd({\cal H}) \leq \dim(G+H).$$
Moreover, if $G+H\in {\cal H}$, then 
$$\Sd({\cal H}) = \dim(G+H).$$
\end{corollary}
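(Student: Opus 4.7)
The plan is to reduce the statement to Theorem~\ref{dimAPermFamily} by showing that ${\cal H}\subseteq {\cal G}_B(G+H)$, where $B=B_1\cup B_2$. Before doing this, I observe that since $G+H$ has diameter at most~$2$ we have $\dim(G+H)=\dim_A(G+H)=|B|$, and since every element of ${\cal H}$ is a join of two non-trivial graphs, it too has diameter at most~$2$; hence $\Sd({\cal H})=\Sd_A({\cal H})$. Consequently, it suffices to establish the bound $\Sd_A({\cal H})\le \dim_A(G+H)$, and, for the moreover part, the matching lower bound when $G+H\in{\cal H}$.

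The central step is the gluing of permutations. Take any member $G'+H'\in{\cal H}$, with $G'\in{\cal G}_{B_1,f_1}(G)$ and $H'\in{\cal G}_{B_2,f_2}(H)$ for some $f_1\in{\cal S}(B_1)$ and $f_2\in{\cal S}(B_2)$. Because $V(G)$ and $V(H)$ are disjoint, defining $f\colon V(G+H)\to V(G+H)$ by $f|_{V(G)}=f_1$ and $f|_{V(H)}=f_2$ yields a permutation in ${\cal S}(B)$. For $x\in B_1$ one has $N_{G+H}(x)=N_G(x)\cup V(H)$, and applying $f$ termwise gives $f_1(N_G(x))\cup f_2(V(H))=N_{G'}(x)\cup V(H)=N_{G'+H'}(x)$, using only that $f_2$ is a bijection of $V(H)$. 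The symmetric identity holds for $x\in B_2$, so $G'+H'\in{\cal G}_{B,f}(G+H)$ and therefore ${\cal H}\subseteq{\cal G}_B(G+H)$.

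With this inclusion in hand, Theorem~\ref{dimAPermFamily} delivers $\Sd_A({\cal H})\le\dim_A(G+H)=\dim(G+H)$, which together with $\Sd({\cal H})=\Sd_A({\cal H})$ yields the first inequality. For the moreover statement, if $G+H\in{\cal H}$ then Remark~\ref{trivialBoundsSimAdjDim}(i) gives $\Sd_A({\cal H})\ge\dim_A(G+H)=\dim(G+H)$, and combining the two bounds produces the claimed equality.

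I do not anticipate a serious obstacle. The only delicate point is the neighborhood identity for the glued permutation $f$, and this is essentially bookkeeping: the universal join edges from a vertex $x\in B_1$ to the whole of $V(H)$ are automatically preserved because $f_2$ permutes $V(H)$, and symmetrically for $x\in B_2$, so the disjointness of $V(G)$ and $V(H)$ lets the two factors be handled independently.
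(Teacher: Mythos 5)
Your proposal is correct and takes essentially the same route as the paper: the paper's proof consists precisely of asserting the inclusion ${\cal G}_{B_1}(G)+{\cal G}_{B_2}(H)\subseteq {\cal G}_B(G+H)$ and invoking Theorem~\ref{dimAPermFamily}, and your gluing of $f_1\in{\cal S}(B_1)$ and $f_2\in{\cal S}(B_2)$ into $f\in{\cal S}(B)$ is exactly the verification of that inclusion which the paper leaves implicit. The surrounding bookkeeping ($\dim(G+H)=\dim_A(G+H)$ and $\Sd({\cal H})=\Sd_A({\cal H})$ via diameter two, plus the lower bound when $G+H\in{\cal H}$) matches the paper's standard reductions.
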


\begin{proof}
The result is a direct consequence of Theorem \ref{dimAPermFamily}, as ${\cal G}_{B_1}(G)+{\cal G}_{B_2}(H)\subseteq {\cal G}_B(G+H)$.
\end{proof}

Given two families  ${\cal G}$ and ${\cal H}$ of non-trivial graphs on common vertex sets $V_1$ and $V_2$, respectively, we define  ${\cal B}({\cal G})$ and ${\cal B}({\cal H})$ as the sets composed by all simultaneous adjacency bases of ${\cal G}$ and ${\cal H}$, respectively. For a simultaneous adjacency basis $B \in {\cal B}({\cal G})$, consider the set 
$$P(B)=\{u \in V_1 :\; B\subseteq N_{G}(u)\textrm{ for some } G\in {\cal G}\}.$$

Similarly, for a simultaneous adjacency basis $B' \in {\cal B}({\cal H})$, consider the set $$Q(B')=\{v \in V_2 :\; B'\subseteq N_{H}(v)\textrm{ for some } H\in {\cal H}\}.$$

Based on the definitions of $P(B)$ and $Q(B')$, we define the parameter $\psi({  \cal G},{\cal H})$ as

\begin{displaymath}
\psi({  \cal G},{\cal H})=\underset{B' \in {\cal B}({\cal H})}{\underset{B \in {\cal B}({\cal G}),}{\min}}\left\{\vert P(B) \vert, \vert Q(B') \vert\right\}.
\end{displaymath}

The following result holds.

\begin{theorem}\label{SimAdjDimJoinsFamilyCase2}
Let ${\cal G}$ and ${\cal H}$ be two families of non-trivial graphs on common vertex sets $V_1$ and $V_2$, respectively. If for every simultaneous adjacency basis $B_1$ of ${\cal G}$ there exists $G \in {\cal G}$ and $g\in V_1$ such that  $B_1\subseteq N_G(g)$ and for every simultaneous adjacency basis $B_2$ of ${\cal H}$ there exists  $H \in {\cal H}$ and $h\in V_2$ such that $B_2\subseteq N_H(h)$,  then $$\Sd_A({\cal G})+\Sd_A({\cal H})+1 \leq \Sd({\cal G}+{\cal H}) \leq \Sd_A({\cal G})+\Sd_A({\cal H})+\psi({\cal G},{\cal H}).$$
\end{theorem}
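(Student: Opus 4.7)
The plan is to exploit the fact that every join graph $G+H$ has diameter two, so $\Sd({\cal G}+{\cal H})=\Sd_A({\cal G}+{\cal H})$, which reduces the whole argument to reasoning about $d_{\cdot,2}$ distances. I would then handle the two inequalities independently: the lower bound by a contradiction argument similar to the proof of Theorem \ref{SimAdjDimJoinsFamilyCase1}, and the upper bound by an explicit construction using the sets $P(B)$ and $Q(B')$.

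For the lower bound, let $W$ be a simultaneous metric basis of ${\cal G}+{\cal H}$ and split it as $W_1=W\cap V_1$, $W_2=W\cap V_2$. As in the proof of Theorem \ref{SimAdjDimJoinsFamilyCase1}, no vertex of $V_2$ can distinguish two vertices of $V_1-W_1$ in any $G+H$, so $W_1$ must be a simultaneous adjacency generator for ${\cal G}$, and symmetrically $W_2$ for ${\cal H}$; hence $|W_1|\ge \Sd_A({\cal G})$ and $|W_2|\ge \Sd_A({\cal H})$. Suppose for contradiction that both equalities hold, so $W_1$ and $W_2$ are simultaneous adjacency bases. Applying the hypothesis, choose $G\in {\cal G}$, $g\in V_1$ with $W_1\subseteq N_G(g)$ and $H\in{\cal H}$, $h\in V_2$ with $W_2\subseteq N_H(h)$. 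Then $g\notin W_1$ and $h\notin W_2$, and in $G+H$ every $w\in W_1$ satisfies $d_{G+H,2}(w,g)=1=d_{G+H,2}(w,h)$ (the latter by the join), while every $w'\in W_2$ satisfies $d_{G+H,2}(w',h)=1=d_{G+H,2}(w',g)$; thus no element of $W$ distinguishes $g$ from $h$, contradicting that $W$ is a simultaneous metric basis. Therefore $|W|\ge \Sd_A({\cal G})+\Sd_A({\cal H})+1$.

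For the upper bound, note that $\psi({\cal G},{\cal H})=\min\{\min_{B\in{\cal B}({\cal G})}|P(B)|,\min_{B'\in{\cal B}({\cal H})}|Q(B')|\}$. Without loss of generality assume the minimum is attained on the ${\cal G}$ side, and pick a simultaneous adjacency basis $B_1$ of ${\cal G}$ with $|P(B_1)|=\psi({\cal G},{\cal H})$, together with any simultaneous adjacency basis $B_2$ of ${\cal H}$. Define $S=B_1\cup B_2\cup P(B_1)$, so $|S|\le \Sd_A({\cal G})+\Sd_A({\cal H})+\psi({\cal G},{\cal H})$. I would verify that $S$ is a simultaneous metric generator for ${\cal G}+{\cal H}$ by checking three kinds of pairs $u,v\in (V_1\cup V_2)-S$: pairs entirely in $V_1-S\subseteq V_1-B_1$ are distinguished in every $G+H$ by some element of $B_1$ (since $B_1$ is a simultaneous adjacency generator for ${\cal G}$ and vertices of $B_1$ see $V_1$ and $V_2$ via the same $d_{\cdot,2}$ as in $G$); pairs entirely in $V_2-S$ are analogously handled by $B_2$; and for a mixed pair $u\in V_1-S$, $v\in V_2-S$, the key point is that $u\notin P(B_1)$, so for every $G\in {\cal G}$ there exists $x\in B_1$ with $x\notin N_G(u)$, giving $d_{G+H,2}(x,u)=2\ne 1=d_{G+H,2}(x,v)$ for every $H\in{\cal H}$.

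The main obstacle is keeping the bookkeeping of the mixed-pair case clean: one needs $P(B_1)\subseteq S$ precisely to guarantee that every leftover vertex of $V_1$ is missed by some element of $B_1$ in every graph of ${\cal G}$, which is exactly how the definition of $P(B_1)$ is used. The symmetric convention (taking the minimum on the ${\cal H}$ side instead) is handled by swapping the roles of ${\cal G}$ and ${\cal H}$ and using $Q(B_2)$ in place of $P(B_1)$, which works identically thanks to the symmetry of the join.
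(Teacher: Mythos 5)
Your proposal is correct and follows essentially the same route as the paper: the same contradiction argument for the lower bound (both $W_1$ and $W_2$ being simultaneous adjacency bases would leave $g$ and $h$ undistinguished in $G+H$), and the same construction $S=B_1\cup B_2\cup P(B_1)$ (or its symmetric counterpart with $Q(B_2)$) for the upper bound. Your direct use of $u\notin P(B_1)$ for the mixed pairs even streamlines the paper's per-graph case analysis slightly, but the underlying argument is identical.
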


\begin{proof}
We first address the proof of the lower bound. Let $W$ be a simultaneous metric  basis of ${\cal G}+{\cal H}$ and let $W_1=W \cap V_1$ and $W_2=W \cap V_2$.  Let $G\in {\cal G}$ and $H\in {\cal H}$. Since no pair of different vertices $u,v \in V_2-W_2$ is distinguished by any vertex in $W_1$, whereas no pair of different vertices $u,v \in V_1-W_1$ is distinguished by any vertex in $W_2$, we conclude that $W_1$ is an adjacency generator for $G$ and $W_2$ is an adjacency generator for $H$. Hence, $W_1$ is  a simultaneous adjacency generator for  ${\cal G}$ and  $W_2$ is   a simultaneous  adjacency generator for ${\cal H}$.  If $W_1$ is a simultaneous adjacency basis of ${\cal G}$ and $W_2$ is a simultaneous adjacency basis of ${\cal H}$, then under the assumptions of this theorem, for at least one graph $G+H\in {\cal G+H}$ there exist $x \in V_1-W_1$ and $y \in V_2-W_2$, such that $W\subseteq N_{G+H}(x)$ and $W\subseteq N_{G+H}(y)$, which is a contradiction. Thus, $W_1$ is not a simultaneous adjacency basis of ${\cal G}$ or $W_2$ is not a simultaneous  adjacency basis of ${\cal H}$. Hence,  $\vert W_1 \vert \geq \Sd_A({\cal G})+1$ or $\vert W_2 \vert \geq \Sd_A({\cal H})+1$. In consequence, we have that $\Sd({\cal G}+{\cal H})=|W|= \vert W_1 \vert + \vert W_2 \vert \geq \Sd_A({\cal G})+\Sd_A({\cal H})+1$.

We now address the proof of the upper bound. Let $B_1$ and $B_2$ be simultaneous adjacency bases of ${\cal G}$ and ${\cal H}$, respectively, for which $\psi({  \cal G},{\cal H})$ is obtained. Assume, without loss of generality, that $\vert P(B_1) \vert \leq \vert Q(B_2) \vert$. Let $S=B_1 \cup B_2 \cup P(B_1)$. We claim that $S$ is a simultaneous metric generator for ${\cal G}+{\cal H}$. To show this,  we differentiate two cases for any $G \in {\cal G} $ and $H \in {\cal H}$.

Case 1. There exists $g\in V_1$ such that $B_1\subseteq N_G(g)$.
We claim that the set $S'=B_1 \cup B_2 \cup \{g\}\subseteq S$ is a metric generator for $G+H$. To see this, we only need to check that for any $u\in V_1-(B_1\cup \{g\})$ and $v\in V_2-B_2$ there exists $s\in S'$ which distinguishes them, as $B_1$ and $B_2$ are adjacency generators for $G$ and $H$, respectively. 
That is, since $g$  is the sole vertex in $V_1$ satisfying $N_G(g)\supseteq B_1$, for any $u\in V_1-(B_1\cup \{g\})$ and $v\in V_2-B_2$ there exists $s\in B_1\subset S'$ such that $d_{G+H,2}(u,s)=2 \ne  1= d_{G+H,2}(v,s)$. Hence, the set $S'\subseteq S$ is a metric generator for $G+H$. 

Case 2. No vertex 
$g\in V_1$ satisfies  $B_1\subseteq N_G(g)$. In this case,  the set $S'=B_1 \cup B_2 \subseteq S$ is a metric generator for $G+H$, as $B_1$ and $B_2$ are adjacency generators for $G$ and $H$, respectively, and
for any $u\in V_1-B_1 $ and $v\in V_2-B_2$ there exists $s\in B_1\subset S'$ such that $d_{G+H,2}(u,s)=2 \ne  1= d_{G+H,2}(v,s)$. 

Therefore, $S$ is a simultaneous metric generator for ${\cal G}+{\cal H}$, so $\Sd({\cal G}+{\cal H}) \leq \vert S \vert = \vert B_1 \vert + \vert B_2 \vert + \vert P(B_1) \vert = \Sd_A({\cal G})+\Sd_A({\cal H})+\psi({  \cal G},{\cal H})$.
\end{proof}

As the following corollary shows, the inequalities above are tight. 

\begin{corollary}\label{partCaseFansOrWheelsPlusFansOrWheels}
Let ${\cal G}=\{G_1, G_2, \ldots, G_k\}$ and ${\cal G}'=\{G'_1, G'_2, \ldots, G'_{k'}\}$ be families composed by paths and/or cycle graphs on common vertex sets $V$ and $V'$ of sizes $n \ge 7$ and $n' \ge 7$, respectively. Let $u,v \notin V\cup V'$, $u\ne v$,  and let ${\cal H}=\{\langle u \rangle + G_1, \langle u \rangle + G_2, \ldots, \langle u \rangle + G_k\}$ and ${\cal H}'=\{\langle v \rangle + G'_1, \langle v \rangle + G'_2, \ldots, \langle v \rangle + G'_{k'}\}$.
Then, $$\Sd({\cal H}+{\cal H}')=\Sd_A({\cal H})+\Sd_A({\cal H}')+1.$$
\end{corollary}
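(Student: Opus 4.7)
The plan is to apply Theorem \ref{SimAdjDimJoinsFamilyCase2} to the families ${\cal H}$ and ${\cal H}'$ (living on $V_1=V\cup\{u\}$ and $V_2=V'\cup\{v\}$) and to show that its two bounds coincide by proving $\psi({\cal H},{\cal H}')=1$. The whole argument hinges on understanding the simultaneous adjacency bases of ${\cal H}$ (and, symmetrically, of ${\cal H}'$).

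First I would establish that $\Sd_A({\cal H})=\Sd_A({\cal G})$ and, crucially, that every simultaneous adjacency basis $B_1$ of ${\cal H}$ is contained in $V$. Each graph $\langle u\rangle+G_i$ has diameter $2$, so $\Sd_A({\cal H})=\Sd({\cal H})$; the Proposition preceding this corollary, applied to the family ${\cal G}$ of paths and cycles on $n\ge 7$ vertices, gives $\Sd({\cal H})=\Sd(K_1+{\cal G})=\Sd_A({\cal G})$. Since $u$ is adjacent to every vertex of $V$ in every $\langle u\rangle+G_i$, it distinguishes no pair of vertices inside $V$; hence for any $x,y\in V\setminus B_1$, an element of $B_1$ that distinguishes them in $\langle u\rangle+G_i$ must lie in $B_1\cap V$ and must also distinguish $x$ from $y$ in $G_i$ itself. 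Therefore $B_1\cap V$ is a simultaneous adjacency generator of ${\cal G}$, so $|B_1\cap V|\ge\Sd_A({\cal G})=\Sd_A({\cal H})=|B_1|$, which forces $u\notin B_1$. The symmetric reasoning yields $v\notin B_2$ for every simultaneous adjacency basis $B_2$ of ${\cal H}'$.

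Second, I would verify the hypothesis of Theorem \ref{SimAdjDimJoinsFamilyCase2}: for any simultaneous adjacency basis $B_1$ of ${\cal H}$, $B_1\subseteq V=N_{\langle u\rangle+G_i}(u)$ for every $G_i$, so $g=u$ works; likewise $g=v$ works for ${\cal H}'$. Hence Theorem \ref{SimAdjDimJoinsFamilyCase2} yields
$$\Sd_A({\cal H})+\Sd_A({\cal H}')+1\;\le\;\Sd({\cal H}+{\cal H}')\;\le\;\Sd_A({\cal H})+\Sd_A({\cal H}')+\psi({\cal H},{\cal H}').$$

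Finally, I would show that $\psi({\cal H},{\cal H}')=1$. By the previous step, $u\in P(B_1)$ for every simultaneous adjacency basis $B_1$ of ${\cal H}$. For any other candidate $g\in V$, the identity $N_{\langle u\rangle+G_i}(g)=N_{G_i}(g)\cup\{u\}$ together with $u\notin B_1$ reduces the inclusion $B_1\subseteq N_{\langle u\rangle+G_i}(g)$ to $B_1\subseteq N_{G_i}(g)$. Since $B_1$ is an adjacency generator of each $G_i$, a path or cycle of order at least $7$, Lemma \ref{LemmaDiameter>6orPathorCycle} forbids this inclusion for every $g\in V$. Hence $P(B_1)=\{u\}$, and symmetrically $Q(B_2)=\{v\}$, so $\psi({\cal H},{\cal H}')=1$ and the two bounds above collapse to the desired equality. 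The main obstacle is the first step, pinning down that $u$ never belongs to a simultaneous adjacency basis of ${\cal H}$; once that is secured, the role of Lemma \ref{LemmaDiameter>6orPathorCycle} in computing $P(B_1)$ and $Q(B_2)$ is immediate.
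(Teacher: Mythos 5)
Your proposal is correct and follows essentially the same route as the paper: apply Theorem~\ref{SimAdjDimJoinsFamilyCase2} and force its two bounds to coincide by showing, via Lemma~\ref{LemmaDiameter>6orPathorCycle}, that $P(B_1)=\{u\}$ and $Q(B_2)=\{v\}$, hence $\psi({\cal H},{\cal H}')=1$. The only difference is cosmetic: where the paper cites the proof of Theorem~\ref{relSimAdjDimK1JoinFamily} to identify the simultaneous adjacency bases of ${\cal H}$ with those of ${\cal G}$ (in particular $u\notin B_1$), you reprove that identification explicitly, which is a valid filling-in of the same step.
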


\begin{proof} By Lemma \ref{LemmaDiameter>6orPathorCycle} we have that
for every simultaneous adjacency generator $B$ for $G\in {\cal G}$ and every $v\in V(G)$, $B\not \subseteq N_G(v).$ Hence, as we have shown in the proof of Theorem  \ref{relSimAdjDimK1JoinFamily},   any simultaneous adjacency basis of ${\cal G}$ is a simultaneous adjacency basis of $K_1+{\cal G}\cong \langle u\rangle+{\cal G}={\cal H}$ and vice versa. So, for any simultaneous adjacency basis $B$ of ${\cal H}$ we have that $P(B)=\{u\}$. Analogously, for any simultaneous adjacency basis $B'$ of ${\cal H}'$, we have  $Q(B')=\{v\}$ and so $\psi({  \cal H},{\cal H'})=1$.
\end{proof}

Notice that the result above can be extended to any pair of graph families ${\cal G}$ and ${\cal G}'$ satisfying the premises of   Lemma \ref{LemmaDiameter>6orPathorCycle}.

\section{Families of standard lexicographic product graphs}
\label{sectFamStdLex}

Note that the lexicographic product of two graphs is not a commutative operation. Moreover, $G\circ H$ is a connected graph if and only if $G$ is connected. We would point out the following known result.

\begin{claim}{\rm \cite{Hammack2011}}\label{basictoolLexicographic} Let $G$ and $H$ be two non-trivial graphs such that $G$ is connected. Then the following assertions hold for any  $a,c\in V(G)$ and $b,d\in V(H)$ such  that $a\ne c$.
\begin{enumerate}[{\rm (i)}]
\item $N_{G\circ H}(a,b)=\left(\{a\}\times  N_H(b)\right)\cup \left( N_G(a)\times  V(H)\right)$.
\item  $d_{G\circ H}((a,b),(c,d)) = d_{G}(a,c)$
\item   $d_{G\circ H}((a,b),(a,d)) = d_{H,2}(b,d)$.
\end{enumerate}
\end{claim}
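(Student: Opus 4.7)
The plan is to unpack each of the three assertions directly from the edge rule of the lexicographic product, namely that $(u_i,v_r)(u_j,v_s) \in E(G\circ H)$ iff $u_iu_j \in E(G)$, or $i=j$ and $v_rv_s \in E(H)$.

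For (i) I would simply read off the neighbours of $(a,b)$ from the adjacency rule. A vertex $(x,y)$ is adjacent to $(a,b)$ precisely when either $x \in N_G(a)$ (with $y$ arbitrary), contributing $N_G(a)\times V(H)$, or $x=a$ and $y \in N_H(b)$, contributing $\{a\}\times N_H(b)$. The union gives the claimed identity. This part is essentially a restatement of the definition.

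For (ii) I would argue two inequalities. For the upper bound $d_{G\circ H}((a,b),(c,d)) \le d_G(a,c)$, take a shortest $a$--$c$ path $a=u_0u_1\cdots u_k=c$ in $G$ with $k=d_G(a,c)\ge 1$, choose arbitrary $y_1,\ldots,y_{k-1}\in V(H)$, and set $y_0=b$, $y_k=d$. By (i), consecutive pairs $(u_i,y_i),(u_{i+1},y_{i+1})$ are adjacent in $G\circ H$ because $u_iu_{i+1}\in E(G)$, so this is a walk of length $k$. For the lower bound, given any path $(a,b)=(z_0,w_0),(z_1,w_1),\ldots,(z_\ell,w_\ell)=(c,d)$ in $G\circ H$, project onto the first coordinate. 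By (i), consecutive $z_i,z_{i+1}$ are either equal or adjacent in $G$, so after contracting repeated entries one obtains an $a$--$c$ walk in $G$ of length at most $\ell$; hence $d_G(a,c) \le \ell$.

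For (iii) I would split into cases. If $b=d$ both sides equal $0$. If $bd\in E(H)$, then by (i) $(a,b)(a,d) \in E(G\circ H)$, so the distance is $1 = d_{H,2}(b,d)$. Otherwise $b\ne d$ and $bd\notin E(H)$; by (i) $(a,b)$ and $(a,d)$ are non-adjacent in $G\circ H$, so the distance is at least $2$. Since $G$ is connected and non-trivial there exists a neighbour $a' \in N_G(a)$, so $(a,b),(a',b),(a,d)$ is a walk of length $2$ in $G\circ H$ (both edges arising from the $N_G(a)\times V(H)$ term), yielding distance exactly $2 = d_{H,2}(b,d)$. The only subtle point I would be careful about is the lower bound in (ii): one must observe that projecting a path in $G\circ H$ onto its first coordinate produces a walk in $G$ (edges lying inside a single fibre collapse to a repeated vertex) rather than a path, and then check that contracting repetitions preserves the endpoints and only decreases the length. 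Everything else is a direct verification from the adjacency rule.
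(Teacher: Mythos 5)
Your proof is correct: each of (i)--(iii) is verified exactly as it should be, including the one genuinely delicate point, namely that in (ii) the projection of a path in $G\circ H$ onto the first coordinate is only a walk (possibly with repeated vertices) and must be contracted to bound $d_G(a,c)$ from above by the path length. The paper itself gives no proof of this claim --- it is quoted from the handbook of product graphs \cite{Hammack2011} --- and your direct verification from the adjacency rule of the lexicographic product is the standard argument for it.
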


Several results on the metric dimension of the lexicographic product $G \circ H$ of two graphs $G$ and $H$, and its relation to the adjacency dimension of $H$, are presented in \cite{JanOmo2012}. In this section, we study the simultaneous metric  dimension of several families composed by lexicographic product graphs, exploiting the simultaneous adjacency dimension as an important tool.

First, we introduce some necessary notation. Let $S$ be a subset of $V(G \circ H)$. The \emph{projection} of $S$ onto $V(G)$ is the set $\{u:\;(u ,v) \in S\}$, whereas the projection of $S$ onto $V(H)$ is the set $\{v :\;(u,v) \in S\}$. We define the \textit{twin  equivalence relation} ${\cal T}$ on $V(G)$ as follows:
$$x {\cal T} y \Longleftrightarrow N_G[x]=N_G[y] \; \; \mbox{\rm or } \; N_G(x)=N_G(y).$$

In what follows, we will denote the equivalence class of vertex $x$ by $x^*=\{y\in V(G):\; y{\cal T} x\}$.
Notice that every equivalence class may be a singleton set, a clique of size at least two of $G$ or an independent set of size at least two of $G$. We will refer to equivalence classes which are non-singleton cliques as \emph{true twin equivalence classes} and to equivalence classes which are non-singleton independent sets as \emph{false twin equivalence classes}. From now on, $T(G)$ denotes the set of all true twin equivalence classes in $V(G)$, whereas $F(G)$ denotes the set of all false twin equivalence classes in $V(G)$. Finally, $V_T(G)$ and $V_F(G)$ denote the sets of vertices belonging to true and false twin equivalence classes, respectively.

For two graph families ${\cal G}=\{G_1,G_2,\ldots,G_{k_1}\}$ and ${\cal H}=\{H_1,H_2,\ldots,H_{k_2}\}$, defined on common vertex sets $V_1$ and $V_2$, respectively, we define the family $${\cal G}\circ{\cal H}=\{G_i \circ H_j:\;1 \leq i \leq k_1, 1 \leq j \leq k_2\}.$$

In particular, if ${\cal G}=\{G\}$ we will use the notation $G \circ {\cal H}$.

Our first result allows to extend any result on the simultaneous adjacency dimension of ${\cal G}\circ{\cal H}$ to the simultaneous metric dimension, and \textit{vice versa}.

\begin{theorem}\label{mainLemmaLexProd}
Let $G$ be a connected graph and let $H$ be a non-trivial graph. Then, every metric generator for $G \circ H$ is also an adjacency generator, and \textit{vice versa}.
\end{theorem}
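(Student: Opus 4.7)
The plan is to prove the two directions separately. The forward direction (every adjacency generator is a metric generator) is immediate: if some $s\in S$ satisfies $d_{G\circ H,2}(s,x)\neq d_{G\circ H,2}(s,y)$, then the actual distances $d_{G\circ H}(s,x),d_{G\circ H}(s,y)$ must also differ, because equal distances have equal truncations at $2$. This applies in any graph, so needs no structural hypothesis on $G$ or $H$.

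For the reverse direction I would argue by contradiction. Assume $S$ is a metric generator but some pair $(a,b),(c,d)\in V(G\circ H)\setminus S$ fails to be adjacency-distinguished by $S$. The case $a=c$ is easy: by Claim~\ref{basictoolLexicographic}, only $(a,t)\in S$ can distinguish $(a,b)$ from $(a,d)$ at all (for $s\ne a$ the distance equals $d_G(s,a)$ on both sides), and those distances $d_{H,2}(t,b),d_{H,2}(t,d)$ lie in $\{0,1,2\}$, so metric and adjacency distinguishing coincide inside a fiber, giving a contradiction. The main case is $a\ne c$, where from the failure of adjacency-distinguishing I would read off, using Claim~\ref{basictoolLexicographic}, the conditions
\begin{enumerate}[(A)]
\item every $(s,t)\in S$ with $s\notin\{a,c\}$ satisfies $s\notin N_G(a)\triangle N_G(c)$;
\item every $(a,t)\in S$ satisfies $d_{H,2}(t,b)=d_{G,2}(a,c)$;
\item every $(c,t)\in S$ satisfies $d_{H,2}(t,d)=d_{G,2}(a,c)$.
\end{enumerate}

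I would then split on whether $R:=(N_G(a)\triangle N_G(c))\setminus\{a,c\}$ is empty. If $R\ne\emptyset$, pick $w\in R$: condition (A) forces $S\cap(\{w\}\times V(H))=\emptyset$, and since $|V(H)|\ge 2$, any two vertices of the fiber $\{w\}\times V(H)$ are at equal $G\circ H$-distance from every $(s,t)\in S$ (because $s\ne w$ makes the distance $d_G(s,w)$, independent of the $H$-coordinate), contradicting that $S$ is a metric generator. If $R=\emptyset$, a short check shows $a,c$ must be twins in $G$ (true twins if $a\sim c$, false twins otherwise), whence connectedness of $G$ forces $d_G(a,c)\in\{1,2\}$ and so $d_{G,2}(a,c)=d_G(a,c)$; the standard fact that twins share distances to every third vertex then makes each $(s,t)\in S$ with $s\notin\{a,c\}$ incapable of metric-distinguishing $(a,b)$ from $(c,d)$, while (B) and (C) make the distances from $(a,t)$ and $(c,t)$ to the two targets match as well, so no element of $S$ metric-distinguishes them, the desired contradiction. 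The main obstacle is that a metric distinguisher sitting far from both $a$ and $c$ (so that adjacency distances saturate at $2$) need not itself be an adjacency distinguisher; the fiber argument built on condition (A), leveraging the non-triviality of $H$, is what closes that gap.
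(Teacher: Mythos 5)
Your proposal is correct and takes essentially the same route as the paper's proof: both rely on the distance formulas of Claim~\ref{basictoolLexicographic}, on the fact that every fiber $\{w\}\times V(H)$ must meet a metric generator (your $R\neq\emptyset$ argument is the contrapositive of this), and on the twins/non-twins dichotomy for the first coordinates, with truncation at $2$ harmless in the same-fiber and twin cases. The only differences are organizational: you argue by contradiction and merge the true- and false-twin cases via $R=\emptyset$, whereas the paper argues directly through four cases.
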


\begin{proof}
By definition, every adjacency generator for $G \circ H$ is also a metric generator, so we only need to prove that any metric generator for $G \circ H$ is also an adjacency generator. Let $S$ be a metric generator for $G \circ H$. For a vertex $u_i \in V(G)$, let $R_i=\{u_i\} \times V(H)$. Notice that $R_i \cap S\ne \emptyset$, for every $u_i \in V(G)$, as no vertex outside of $\{u_i\}\times V(H)$ distinguishes pairs of vertices in $\{u_i\}\times V(H)$.
 We differentiate the following cases for two different vertices $(u_i,v_r),(u_j,v_s)\in V(G \circ H)-S$:
\begin{enumerate}
\item $i = j$. In this case, no vertex from $R_x \cap S$, $x \neq i$, distinguishes $(u_i,v_r)$ and $(u_j,v_s)$, so there exists $(u_i,v) \in R_i \cap S$ such that $d_{G \circ H,2}((u_i,v_r),(u_i,v))=d_{G \circ H}((u_i,v_r),(u_i,v)) \neq d_{G \circ H}((u_j,v_s),(u_i,v))=d_{G \circ H,2}((u_j,v_s),(u_i,v))$.
\item $u_i$ and $u_j$  are true twins $(i \neq j)$. Here, no vertex from $R_x \cap S$, $x \notin \{i,j\}$, distinguishes $(u_i,v_r)$ and $(u_j,v_s)$, so there exists $(u_i,v)\in R_i \cap S$ such that $d_{G \circ H,2}((u_i,v_r),(u_i,v))=d_{G \circ H}((u_i,v_r),(u_i,v))=2 \neq 1=d_{G \circ H}((u_j,v_s),(u_i,v))=d_{G \circ H,2}((u_j,v_s),(u_i,v))$, or there exists $(u_j,v)\in R_j \cap S$ such that $d_{G \circ H,2}((u_i,v_r),(u_j,v))=d_{G \circ H}((u_i,v_r),(u_j,v))=1 \neq 2=d_{G \circ H}((u_j,v_s),(u_j,v))=d_{G \circ H,2}((u_j,v_s),(u_j,v))$.
\item   $u_i$ and $u_j$ are false twins ($i \neq j$). As in the previous case, no vertex from $R_x \cap S$, $x \notin \{i,j\}$, distinguishes $(u_i,v_r)$ and $(u_j,v_s)$, so there exists $(u_i,v)\in R_i \cap S$ such that $d_{G \circ H,2}((u_i,v_r),(u_i,v))=d_{G \circ H}((u_i,v_r),(u_i,v))=1 \neq 2=d_{G \circ H}((u_j,v_s),(u_i,v))=d_{G \circ H,2}((u_j,v_s),(u_i,v))$, or there exists $(u_j,v)\in R_j \cap S$ such that $d_{G \circ H,2}((u_i,v_r),(u_j,v))=d_{G \circ H}((u_i,v_r),(u_j,v))=2 \neq 1=d_{G \circ H}((u_j,v_s),(u_j,v))=d_{G \circ H,2}((u_j,v_s),(u_j,v))$.
\item $u_i$ and $u_j$ are not twins. In this case, there exists $u_x \in V(G)-\{u_i,u_j\}$ such that $d_{G,2}(u_i,u_x)\neq d_{G,2}(u_j,u_x)$. Hence, for any $(u_x,v) \in R_x \cap S$ we have that $d_{G \circ H,2}((u_i,v_r),(u_x,v))=d_{G,2}(u_i,u_x)$ $\neq d_{G,2}(u_j,u_x)=d_{G \circ H,2}((u_j,v_s),(u_x,v))$.
\end{enumerate}
In conclusion, $S$ is an adjacency generator for $G \circ H$. The proof is complete.
\end{proof}

\begin{corollary}\label{mainLemmaFamLexProd}
For any  connected graph   and any  non-trivial graph $H$, $$\dim(G\odot H)=\dim_A(G\odot H).$$
In general, for every family ${\cal G}$ composed by connected graphs on a common vertex set, and every family ${\cal H}$ composed by non-trivial graphs on a common vertex set, $$\Sd({\cal G}\circ{\cal H})=\Sd_A({\cal G}\circ{\cal H}).$$
\end{corollary}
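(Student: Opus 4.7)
The plan is to derive both equalities as direct consequences of Theorem~\ref{mainLemmaLexProd}, which already establishes the key non-trivial content: the coincidence of metric generators and adjacency generators for an arbitrary lexicographic product of a connected graph with a non-trivial graph. The corollary is essentially a bookkeeping step that transfers this coincidence from single graphs to (simultaneous) minimum-cardinality parameters on families.

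For the single-graph statement $\dim(G\circ H)=\dim_A(G\circ H)$, I would observe that by definition every adjacency generator is a metric generator, hence $\dim(G\circ H)\le \dim_A(G\circ H)$. The reverse inequality is exactly the content of Theorem~\ref{mainLemmaLexProd}: any metric basis of $G\circ H$ is automatically an adjacency generator, so its cardinality bounds $\dim_A(G\circ H)$ from above.

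For the family version, I would prove the two inequalities separately. The inequality $\Sd({\cal G}\circ{\cal H})\le \Sd_A({\cal G}\circ{\cal H})$ follows from Remark~\ref{trivialBoundsSimAdjDim}(ii) applied to the family ${\cal G}\circ{\cal H}$. For the reverse, I would take a simultaneous metric basis $S$ of ${\cal G}\circ{\cal H}$; by definition, $S$ is a metric generator for every single product $G_i\circ H_j$ with $G_i\in{\cal G}$ and $H_j\in{\cal H}$. Because each $G_i$ is connected and each $H_j$ is non-trivial, the hypotheses of Theorem~\ref{mainLemmaLexProd} are satisfied for each such product individually, and hence $S$ is an adjacency generator for every $G_i\circ H_j$. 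Thus $S$ is a simultaneous adjacency generator for ${\cal G}\circ{\cal H}$, giving $\Sd_A({\cal G}\circ{\cal H})\le |S|=\Sd({\cal G}\circ{\cal H})$.

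Since Theorem~\ref{mainLemmaLexProd} has absorbed all the combinatorial work (the case analysis on whether two vertices project to the same factor, to twin vertices, or to non-twin vertices of $G$), there is no real obstacle remaining: the proof is a one-line application per graph in the family, combined with the trivial inequality $\Sd\le \Sd_A$. The only minor care needed is to check that the hypotheses of Theorem~\ref{mainLemmaLexProd} hold uniformly for every member of the family, which is guaranteed by the assumptions that ${\cal G}$ consists of connected graphs and ${\cal H}$ of non-trivial graphs.
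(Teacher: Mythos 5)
Your proof is correct and matches the paper's intent exactly: the corollary is stated there without a separate proof, precisely because it follows by applying Theorem~\ref{mainLemmaLexProd} to each product $G_i\circ H_j$ in the family together with the trivial inequality that every adjacency generator is a metric generator. No gaps; your handling of the family case via Remark~\ref{trivialBoundsSimAdjDim}(ii) and the uniform applicability of the theorem's hypotheses is exactly the intended argument.
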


 We would point out that the equalities above hold, even for lexicographic product graphs of diameter greater than two.

The following result, presented in \cite{JanOmo2012}, gives a lower bound on $\dim(G \circ H)$, which depends on the order of $G$ and $\dim_A(H)$.

\begin{theorem}
\label{lowerBoundDimLexAdjH}
{\rm \cite{JanOmo2012}} Let $G$ be a connected graph of order $n$ and let $H$ be a non-trivial graph. Then $\dim(G \circ H) \geq n\cdot\dim_A(H)$.
\end{theorem}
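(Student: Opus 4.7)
The plan is to show that a metric basis of $G\circ H$ must contain, for \emph{each} copy of $H$, enough vertices to form an adjacency generator for $H$, and then add up these contributions over the $n$ copies.

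Let $V(G)=\{u_1,\dots,u_n\}$ and, for each $i$, denote the $i$-th fiber by $R_i=\{u_i\}\times V(H)$. Given a metric basis $S$ of $G\circ H$, set $S_i = S\cap R_i$ and let $S_i'\subseteq V(H)$ be the projection of $S_i$ onto $V(H)$, so that $|S_i'|=|S_i|$. The first step is a locality lemma: any two distinct vertices lying in the same fiber $R_i$ can only be distinguished in $G\circ H$ by a vertex that also lies in $R_i$. This is an immediate consequence of Claim~\ref{basictoolLexicographic}(ii): for any $(u_i,v_r),(u_i,v_s)\in R_i$ and any $(u_j,v_t)$ with $j\neq i$, one has $d_{G\circ H}((u_i,v_r),(u_j,v_t))=d_G(u_i,u_j)=d_{G\circ H}((u_i,v_s),(u_j,v_t))$, so $(u_j,v_t)$ fails to distinguish them.

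The second step is to upgrade each $S_i'$ to an adjacency generator of $H$. Take two distinct vertices $v_r,v_s\in V(H)-S_i'$; then $(u_i,v_r)$ and $(u_i,v_s)$ lie in $R_i\setminus S$, and since $S$ is a metric generator they are distinguished by some $s\in S$. By the locality observation above, $s=(u_i,v)$ for some $v\in S_i'$, and Claim~\ref{basictoolLexicographic}(iii) gives
\[
d_{H,2}(v_r,v)=d_{G\circ H}((u_i,v_r),(u_i,v))\neq d_{G\circ H}((u_i,v_s),(u_i,v))=d_{H,2}(v_s,v).
\]
Since $v\notin\{v_r,v_s\}$, the only way for these two truncated distances to differ is that $v$ is adjacent in $H$ to exactly one of $v_r,v_s$. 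Hence $S_i'$ is an adjacency generator for $H$, giving $|S_i|=|S_i'|\ge\dim_A(H)$.

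Summing over $i$ and using that the fibers $R_i$ partition $V(G\circ H)$, we obtain
\[
\dim(G\circ H)=|S|=\sum_{i=1}^{n}|S_i|\;\geq\; n\cdot\dim_A(H),
\]
as desired. The only nontrivial step is the locality observation in the first paragraph; once that is in place, the adjacency-generator argument on each fiber is essentially forced by the definition of $d_{G\circ H}$ restricted to a fiber.
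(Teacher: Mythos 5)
Your proposal is correct and follows essentially the same route the paper relies on: it cites \cite{JanOmo2012} for precisely the fact that, for a metric generator $S$ of $G\circ H$, the projection of $S\cap R_i$ onto $V(H)$ is an adjacency generator for $H$, and then sums over the $n$ fibers (see the proof of Theorem~\ref{lowerBoundSdLexSdAH}). Your locality observation via Claim~\ref{basictoolLexicographic}(ii) and the truncated-distance argument via (iii) are exactly the standard justification of that step, so nothing is missing.
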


We now generalise the previous result for families composed by lexicographic product graphs.

\begin{theorem}\label{lowerBoundSdLexSdAH}
Let ${\cal G}$ be a family of connected graphs on a common vertex set $V_1$ and let ${\cal H}$ be a family of non-trivial graphs on a common vertex set $V_2$. Then $$\Sd({\cal G}\circ{\cal H}) \geq \vert V_1 \vert \cdot \Sd_A({\cal H}).$$
\end{theorem}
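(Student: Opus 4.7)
The plan is to invoke Corollary~\ref{mainLemmaFamLexProd} to replace $\Sd({\cal G}\circ{\cal H})$ by $\Sd_A({\cal G}\circ{\cal H})$, and then argue ``column by column''. Let $S$ be any simultaneous adjacency basis of ${\cal G}\circ{\cal H}$. For each $u_i \in V_1$ set $R_i = \{u_i\}\times V_2$, $S_i = S\cap R_i$, and let $S_i' \subseteq V_2$ be the projection of $S_i$ onto $V_2$; note that $|S_i|=|S_i'|$ and the $S_i$'s partition $S$. The goal is to show that each $S_i'$ is itself a simultaneous adjacency generator for ${\cal H}$, whence $|S_i'|\ge \Sd_A({\cal H})$ and summing over $i\in\{1,\dots,|V_1|\}$ yields the bound.

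To verify the claim, fix $u_i\in V_1$ and $H\in {\cal H}$, and choose any $G\in{\cal G}$. Take two distinct vertices $v_r,v_s \in V_2 - S_i'$, so that $(u_i,v_r),(u_i,v_s)\in V(G\circ H)-S$. Since $S$ is an adjacency generator for $G\circ H$, some element $(u_j,v)\in S$ satisfies $d_{G\circ H,2}((u_i,v_r),(u_j,v)) \ne d_{G\circ H,2}((u_i,v_s),(u_j,v))$. By Claim~\ref{basictoolLexicographic}(ii), if $j\ne i$ then both of these distances equal $\min\{d_G(u_i,u_j),2\}$, which is impossible. Hence $j=i$, so $(u_i,v)\in S_i$, i.e.\ $v\in S_i'$, and by Claim~\ref{basictoolLexicographic}(iii) the inequality becomes $d_{H,2}(v_r,v)\ne d_{H,2}(v_s,v)$. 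Since $v\in S_i'$ while $v_r,v_s\notin S_i'$, this says $v$ distinguishes $v_r$ from $v_s$ in $H$ under the adjacency metric. Thus $S_i'$ is an adjacency generator for $H$, and since $H$ was arbitrary, $S_i'$ is a simultaneous adjacency generator for ${\cal H}$.

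Putting the pieces together,
\begin{equation*}
\Sd({\cal G}\circ{\cal H}) \;=\; \Sd_A({\cal G}\circ{\cal H}) \;=\; |S| \;=\; \sum_{u_i\in V_1} |S_i| \;=\; \sum_{u_i\in V_1} |S_i'| \;\ge\; |V_1|\cdot \Sd_A({\cal H}),
\end{equation*}
as required. There is no real obstacle here: the argument is forced once one observes that vertices sitting in a ``column'' $R_j$ with $j\ne i$ are equidistant from every vertex of $R_i$, so pairs of twins of that form can only be separated by their own column. The only point needing care is the bookkeeping that $v$ (chosen from $S_i'$) automatically lies outside $\{v_r,v_s\}$, which is immediate since $v_r,v_s\in V_2-S_i'$.
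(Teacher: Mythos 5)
Your proof is correct and follows essentially the same route as the paper: partition a basis into the columns $R_i=\{u_i\}\times V_2$, show each projection onto $V_2$ is a simultaneous adjacency generator for ${\cal H}$, and sum over $u_i\in V_1$. The only cosmetic differences are that you prove the column claim directly from Claim~\ref{basictoolLexicographic} (where the paper cites the analogous argument of \cite{JanOmo2012}) and that your initial appeal to Corollary~\ref{mainLemmaFamLexProd} is harmless but unnecessary, since the same argument works verbatim with a simultaneous metric basis and the metric $d_{G\circ H}$.
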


\begin{proof}
It was shown  in \cite{JanOmo2012} that if $S'$ is a metric generator for $G \circ H$, and $R_i = \{u_i\} \times V(H)$ for some $u_i \in V(G)$, then $S' \cap R_i$ resolves all vertex pairs in $R_i$, and the projection of $S' \cap R_i$ onto $V(H)$ is an adjacency generator for $H$. Following an analogous reasoning, consider a simultaneous metric generator $S$ for ${\cal G}\circ{\cal H}$, and let $R_i=\{u_i\} \times V_2$ for some $u_i \in V_1$. We have that the projection of $S \cap R_i$ onto $V_2$ is an adjacency generator for every $H \in {\cal H}$ and, in consequence, a simultaneous adjacency generator for ${\cal H}$, so $|R_i \cap S|\ge \Sd_A({\cal H})$. Thus, $\Sd({\cal G}\circ{\cal H})=|S|=\displaystyle{\sum_{u_i \in V_1}|R_i \cap S|} \geq |V_1|\cdot\Sd_A({\cal H})$.
\end{proof}

In order to present our next results, we introduce some additional definitions. For a graph family ${\cal G}$, defined on a common vertex set $V$, let $V_M({\cal G})=\{u :\; u \in V_T(G), u \in V_F(G') \textrm{ for some } G,G' \in {\cal G}\}$. Moreover, for a family ${\cal H}$ composed by non-trivial graphs on a common vertex set $V'$, let ${\cal B}_1({\cal H})$ be the set of simultaneous adjacency bases $B$ of ${\cal H}$ satisfying $B \nsubseteq N_{H}(v)$ for every $H \in {\cal H}$ and every $v \in V'$, and let ${\cal B}_2({\cal H})$ be the set of simultaneous adjacency bases of ${\cal H}$ that are also dominating sets of every $H \in {\cal H}$. Finally, we define the parameter $$\zeta({\cal H})=\min \left\{k_2,\underset{\underset{B_2 \in {\cal B}_2({\cal H})}{_{B_1 \in {\cal B}_1({\cal H})}}}{\min} \left\{\vert B_2 - B_1 \vert\right\}\right\}.$$

With these definitions in mind, we give the next result.

\begin{theorem}\label{upperBoundFamilyLex1FamGFamH}
Let ${\cal G}=\{G_1,G_2,\ldots,G_{k_1}\}$ be a family of connected graphs on a common vertex set $V_1$, let ${\cal H}=\{H_1,H_2,\ldots,H_{k_2}\}$ be a family of non-trivial graphs, defined on a common vertex set $V_2$, such that ${\cal B}_1({\cal H})$ and ${\cal B}_2({\cal H})$ are not empty, and let $\overline{\cal H}=\{\overline{H}_1,\overline{H}_2\,\ldots,\overline{H}_{k_2}\}$. If $V_M({\cal G})=\emptyset$ or ${\cal B}_1({\cal H}) \cap {\cal B}_2({\cal H}) \neq \emptyset$, then
\begin{equation}\label{eqFirstEqSdA}
\Sd({\cal G} \circ {\cal H})=\Sd({\cal G} \circ \overline{\cal H})=\vert V_1 \vert \cdot \Sd_A({\cal H}).
\end{equation}

\noindent
Otherwise,
\begin{equation}\label{eqIneqSdA}
\vert V_1 \vert \cdot \Sd_A({\cal H})+\vert V_M({\cal G}) \vert\leq\Sd({\cal G} \circ {\cal H})=\Sd({\cal G} \circ \overline{\cal H}) \leq \vert V_1 \vert \cdot \Sd_A({\cal H}) + \zeta({\cal H}) \cdot \vert V_M({\cal G}) \vert.
\end{equation}
\end{theorem}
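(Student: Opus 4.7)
I would begin by invoking Corollary~\ref{mainLemmaFamLexProd} to replace $\Sd$ by $\Sd_A$ throughout, so that the entire argument happens at the combinatorial ``$b$ distinguishes $v_r, v_s$'' level rather than via the geodesic distance. The baseline $\Sd({\cal G} \circ {\cal H}) \geq |V_1| \cdot \Sd_A({\cal H})$ is already Theorem~\ref{lowerBoundSdLexSdAH}, and its proof supplies the projection lemma I would rely on: for any simultaneous metric generator $S$, the projection $B^{(i)} := \{v \in V_2 : (u_i,v) \in S\}$ onto each column $\{u_i\} \times V_2$ is itself a simultaneous adjacency generator of ${\cal H}$, so $|B^{(i)}| \geq \Sd_A({\cal H})$. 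The key technical subroutine is then a case analysis of how a pair $(u_i,v_r),(u_j,v_s) \notin S$ gets distinguished in $G_l \circ H_m$: (a) if $i=j$, by an adjacency-basis element of $B^{(i)}$ in $H_m$; (b) if $u_i,u_j$ are non-twins in $G_l$, by any element of $\{u_x\} \times B^{(x)}$ for an $u_x$ distinguishing $u_i,u_j$ under $d_{G_l,2}$; (c) if they are true twins in $G_l$, iff $B^{(i)} \nsubseteq N_{H_m}(v_r)$ or $B^{(j)} \nsubseteq N_{H_m}(v_s)$; (d) if they are false twins in $G_l$, iff $B^{(i)} \cap N_{H_m}(v_r) \ne \emptyset$ or $B^{(j)} \cap N_{H_m}(v_s) \ne \emptyset$.

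For the upper bounds I would build $S = \bigcup_i \{u_i\} \times B^{(i)}$ by choosing the $B^{(i)}$'s to kill all pairs in cases (c) and (d) simultaneously. In the equality regime, if ${\cal B}_1({\cal H}) \cap {\cal B}_2({\cal H}) \ne \emptyset$ I pick a common $B$ in this intersection and use $B^{(i)} := B$ for all $i$, so (c) is killed by the ${\cal B}_1$ property and (d) by the ${\cal B}_2$ property; and if instead $V_M({\cal G}) = \emptyset$, I split $V_1$ into columns ever appearing in a true-twin pair (assigned a basis from ${\cal B}_1$) and columns ever appearing in a false-twin pair (assigned a basis from ${\cal B}_2$) --- the emptiness of $V_M$ is exactly what makes this split well defined. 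Both sub-cases give $|S| = |V_1| \cdot \Sd_A({\cal H})$, matching the general lower bound. In the otherwise regime, I take $B_1 \in {\cal B}_1$ and $B_2 \in {\cal B}_2$ realising $\min|B_2 - B_1|$ and, for each $u_i \in V_M({\cal G})$, augment the column's assigned basis by the $|B_2 - B_1|$ extra vertices needed so that (c) and (d) both hold for that column; when $k_2 < \min|B_2 - B_1|$, I instead add one vertex per $H_m$ witnessing the failure of the relevant condition, which accounts for the $\min\{k_2, \cdot\}$ in the definition of $\zeta({\cal H})$.

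The lower bound $|V_1| \cdot \Sd_A({\cal H}) + |V_M({\cal G})|$ in the otherwise regime is what I anticipate as the main obstacle. The plan there is a charging argument: each $u_i \in V_M({\cal G})$ sits simultaneously in a true-twin pair (in some $G_l$) and a false-twin pair (in some $G_{l'}$), so conditions (c) and (d) must both be satisfied on column $u_i$; because ${\cal B}_1 \cap {\cal B}_2 = \emptyset$, a basis-sized $B^{(i)}$ necessarily fails at least one of the ${\cal B}_1$ or ${\cal B}_2$ properties and must therefore be augmented by at least one extra vertex, or the augmentation must appear in a twin partner's column, to which the charge is then transferred. Balancing these charges over $V_M({\cal G})$ yields $|S| \geq |V_1| \cdot \Sd_A({\cal H}) + |V_M({\cal G})|$. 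Finally, the equality $\Sd({\cal G} \circ {\cal H}) = \Sd({\cal G} \circ \overline{\cal H})$ follows from the involutive symmetries ${\cal B}_1(\overline{\cal H}) = {\cal B}_2({\cal H})$ and ${\cal B}_2(\overline{\cal H}) = {\cal B}_1({\cal H})$, which are immediate from $N_{\overline{H}}(v) = V_2 \setminus N_H[v]$, combined with $\Sd_A({\cal H}) = \Sd_A(\overline{\cal H})$ of Remark~\ref{simAdjDimCompl}; together these render the entire hypothesis dichotomy and every bound quantity invariant under replacing ${\cal H}$ by $\overline{\cal H}$, so the same bounds force equality of the two simultaneous dimensions.
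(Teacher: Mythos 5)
Your reduction to the column conditions (a)--(d) is correct, and your upper-bound constructions are essentially the ones in the paper's proof: a common basis $B\in{\cal B}_1({\cal H})\cap{\cal B}_2({\cal H})$ placed on every column, the true-twin/false-twin split of $V_1$ when $V_M({\cal G})=\emptyset$, and, on the $V_M$-columns, either $B_1\cup B_2$ or an augmentation by at most $k_2$ vertices (for the latter you still owe the observation that, since $B_1$ and $B_2$ are adjacency generators, each $H\in{\cal H}$ has at most one vertex it fails to dominate and at most one vertex whose neighbourhood contains the whole basis; this is what caps the augmentation at $k_2$ and makes it cover all outside vertices). The genuine gap is the lower bound $\vert V_1\vert\cdot\Sd_A({\cal H})+\vert V_M({\cal G})\vert$, and it sits exactly where you hedge. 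Conditions (c) and (d) are disjunctions over the two columns of a twin pair, so the ``charge'' you transfer to a twin partner's column can be absorbed there at zero cost by a plain basis-sized projection, and two columns of $V_M({\cal G})$ can absorb each other's charges, one carrying a basis from ${\cal B}_1({\cal H})$ and the other a basis from ${\cal B}_2({\cal H})$. Concretely, take ${\cal G}=\{K_3,P_3\}$ on $V_1=\{a,b,c\}$ with $P_3$ centred at $c$ (so $V_M({\cal G})=\{a,b\}\neq\emptyset$) and ${\cal H}=\{P_5\}$ (so ${\cal B}_1({\cal H})\cap{\cal B}_2({\cal H})=\emptyset$, as computed in Section~\ref{sectFamStdLex}). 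The set $S=(\{a\}\times\{v_2,v_4\})\cup(\{b\}\times\{v_2,v_3\})\cup(\{c\}\times\{v_2,v_3\})$ has cardinality $6=\vert V_1\vert\cdot\Sd_A({\cal H})$, and your own criteria (a)--(d) show it is a simultaneous metric generator: each column carries an adjacency basis of $P_5$; $\{v_2,v_4\}$ dominates $P_5$, which settles the false-twin pair $\{a,b\}$ of $P_3$; $\{v_2,v_3\}$ is contained in no neighbourhood of $P_5$, which settles every true-twin pair of $K_3$ since each such pair has a column carrying $\{v_2,v_3\}$; the remaining pairs fall under (a) and (b). So no balancing of charges over $V_M({\cal G})$ can yield the claimed bound, and your plan for the left-hand inequality of Equation~(\ref{eqIneqSdA}) cannot be completed.

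This is also the point where your route and the paper's diverge, and where neither survives the example above. The paper does not charge anything: it asserts the per-column bound $\vert B_p\vert\geq\Sd_A({\cal H})+1$ for every $u_p\in V_M({\cal G})$, on the grounds that when $B'_p\subseteq N_{H'}(v)$ no vertex of the basis distinguishes $(u_p,v)$ from $(u_t,w)$ for a true twin $u_t$ of $u_p$; that assertion overlooks the vertices of the partner column $\{u_t\}\times B'_t$, which do distinguish the pair whenever $B'_t\nsubseteq N_{H'}(w)$ --- precisely the loophole your transfer clause names and the construction above exploits. So your instinct that the partner column must be accounted for is sound, but the accounting cannot be closed without strengthening the hypotheses. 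Separately, your final step is too quick: the symmetry ${\cal B}_1(\overline{\cal H})={\cal B}_2({\cal H})$, ${\cal B}_2(\overline{\cal H})={\cal B}_1({\cal H})$ does give the same upper and lower bounds for ${\cal G}\circ\overline{\cal H}$, but two quantities trapped between the same non-coincident bounds need not be equal, so in the second regime this does not prove $\Sd({\cal G}\circ{\cal H})=\Sd({\cal G}\circ\overline{\cal H})$ (the paper's proof likewise only derives the same sandwich for $\overline{\cal H}$); only in the first regime, where the bounds coincide, does the equality follow.
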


\begin{proof}
We first assume that $V_M({\cal G})=\emptyset$. By Theorem~\ref{lowerBoundSdLexSdAH}, we have that $\Sd({\cal G} \circ {\cal H}) \geq |V_1|\cdot\Sd_A({\cal H})$. Thus, it only remains to prove that $\Sd({\cal G} \circ {\cal H}) \le |V_1|\cdot\Sd_A({\cal H})$. To this end, consider the partition $\{V'_1,V''_1\}$ of $V_1$, where $V'_1=\{u:\;u \in V_T(G)\textrm{ for some }G \in {\cal G}\}$, and a pair of simultaneous adjacency bases $B_1 \in {\cal B}_1({\cal H})$ and $B_2 \in {\cal B}_2({\cal H})$. 
Consider the set $$S=\left(V'_1 \times B_1\right) \cup \left(V''_1 \times B_2\right).$$
It  was shown in \cite{JanOmo2012} that a set constructed in this manner, considering ${\cal G}=\{G\}$ and  ${\cal H}=\{H\}$, is a metric generator for $G \circ H$. Following an analogous reasoning, we shall deduce that $S$ is also a metric generator for every $G \circ H\in    {\cal G}\circ{\cal H}$, and thus a simultaneous metric generator for ${\cal G} \circ {\cal H}$. For the sake of thoroughness of our discussion, we elaborate the four cases for two different vertices $(u_i,v_r),(u_j,v_s)\in V(G\circ H)-S$:
\begin{enumerate}
\item $i=j$. In this case, $r\ne s$. Let $R_i=\{u_i\} \times V_2$. Since $S\cap R_i=\{u_i\}\times B_1$ or $S\cap R_i=\{u_i\}\times B_2$ and both $B_1$ and $B_2$ are adjacency generators for $H$, there exists $v \in B_1$ such that $d_{H,2}(v,v_r)\ne d_{H,2}(v,v_s)$, or there exists $v \in B_2$ such that $d_{H,2}(v,v_r)\ne d_{H,2}(v,v_s)$. Since for every $(u_i,v_r),(u_i,v_s)\in R_i$ we have that $d_{G \circ H,2}((u_i,v_r),(u_i,v_s))=d_{H,2}(v_r,v_s)$, we conclude that at least one element from $S$ distinguishes $(u_i,v_r)$ and $(u_i,v_s)$.
\item $i\ne j$ and $u_i,u_j$ are true twins. Here, since $B_1 \nsubseteq N_{H}(v_r)$, there exists $v\in B_1$ such that $d_{H,2}(v_r,v)=2$. Thus, $d_{G \circ H,2}((u_i,v_r),(u_i,v))=d_{H,2}(v_r,v)=2\ne 1=d_{G,2}(u_j,u_i)=d_{G \circ H,2}((u_j,v_s),(u_i,v))$.
\item $i\ne j$ and $u_i,u_j$ are false twins. Here, since $B_2$ is a dominating set of $H$, there exists $v\in B_2$ such that $d_{H,2}(v_r,v)=1$. Thus, $d_{G \circ H,2}((u_i,v_r),(u_i,v))=d_{H,2}(v_r,v)=1\ne 2=d_{G,2}(u_j,u_i)=d_{G \circ H,2}((u_j,v_s),(u_i,v))$.
\item $i\ne j$ and $u_i,u_j$ are not twins. Here, there exists $u_z \in V_1$ such that $d_{G,2}(u_i,u_z)\ne d_{G,2}(u_j,u_z)$. Since $S \cap R_z\neq\emptyset$, we have that $d_{G \circ H,2}((u_i,v_r),(u_z,v))=d_{G,2}(u_i,u_z)\ne d_{G,2}(u_j,u_z)=d_{G \circ H,2}((u_j,v_s),(u_z,v))$ for every $(u_z,v)\in S$.
\end{enumerate}
Therefore, $S$ is a metric generator for every $G \circ H \in {\cal G} \circ {\cal H}$ and, in consequence, a simultaneous metric generator for ${\cal G} \circ {\cal H}$. Hence, $\Sd({\cal G} \circ {\cal H}) \le \vert S \vert = |V_1|\cdot\Sd_A({\cal H})$ and the equality holds.

We now address the proof of $\Sd({\cal G} \circ \overline{\cal H})=|V_1| \cdot \Sd_A({\cal H})$. As pointed out in \cite{JanOmo2012}, $B_1$ is a dominating set of every $\overline{H} \in \overline{\cal H}$ and $B_2$ satisfies $B_2 \nsubseteq N_{\overline{H}}(v)$ for every $\overline{H} \in \overline{\cal H}$ and every $v \in V_2$. Since $\Sd_A({\cal H})=\Sd_A(\overline{{\cal H}})$, by exchanging the roles of $B_1$ and $B_2$ and proceeding in a manner analogous to the one used for proving that $\Sd({\cal G} \circ {\cal H})\leq |V_1|\cdot\Sd_A({\cal H})$, we obtain that $\Sd({\cal G} \circ \overline{\cal H}) \leq |V_1|\cdot\Sd_A(\overline{\cal H})=|V_1|\cdot\Sd_A({\cal H})$. Since $\Sd({\cal G} \circ \overline{\cal H}) \geq |V_1|\cdot\Sd_A(\overline{\cal H})=|V_1|\cdot\Sd_A({\cal H})$ by Theorem~\ref{lowerBoundSdLexSdAH}, the equality holds.

From now on, we assume that $V_M({\cal G}) \neq \emptyset$ and ${\cal B}_1({\cal H}) \cap {\cal B}_2({\cal H}) \neq \emptyset$. Consider a simultaneous adjacency basis $B \in {\cal B}_1({\cal H}) \cap {\cal B}_2({\cal H})$. By a reasoning analogous to the one previously shown, we have that the set $S=V_1 \times B$ is a metric generator for every $G \circ H \in {\cal G}\circ{\cal H}$ and every $G \circ \overline{H} \in {\cal G}\circ \overline{\cal H}$. Consequently, $S$ is a simultaneous metric generator for ${\cal G} \circ {\cal H}$ and ${\cal G} \circ \overline{\cal H}$, so $\Sd({\cal G} \circ {\cal H}) \le \vert S \vert = \vert V_1 \vert \cdot \Sd_A({\cal H})$ and $\Sd({\cal G} \circ \overline{\cal H}) \le \vert S \vert = \vert V_1 \vert \cdot \Sd_A({\cal H})$. By Theorem~\ref{lowerBoundSdLexSdAH}, $\Sd({\cal G} \circ {\cal H}) \ge \vert V_1 \vert \cdot \Sd_A({\cal H})$ and $\Sd({\cal G} \circ \overline{\cal H}) \ge \vert V_1 \vert \cdot \Sd_A({\cal H})$, so the equalities hold.

From now on, we assume that $V_M({\cal G}) \neq \emptyset$ and ${\cal B}_1({\cal H}) \cap {\cal B}_2({\cal H}) = \emptyset$. Let $B$ be a simultaneous metric basis of ${\cal G} \circ {\cal H}$ and let $B_p=B \cap (\{u_p\} \times V_2)$ for some $u_p \in V_1$. Recall that, as shown in the proof of Theorem~\ref{lowerBoundSdLexSdAH}, the projection of $B_p$ onto $V_2$ is a simultaneous adjacency generator for ${\cal H}$. Let $B'_p$ be the projection onto $V_2$ of some $B_p$ such that $u_p \in V_M({\cal G})$. Suppose, for the purpose of contradiction, that $|B'_p|=\Sd_A({\cal H})$. Let $G \in {\cal G}$ be a graph where $u_p \in V_T(G)$ and let $G' \in {\cal G}$ be a graph where $u_p \in V_F(G')$. We have that there exists $v \in V_2-B'_p$ such that either $B'_p \subseteq N_{H'}(v)$ for some $H' \in {\cal H}$ or $B'_p \cap N_{H''}(v)=\emptyset$ for some $H'' \in {\cal H}$. In the first case, no vertex $(x,y)\in B$ distinguishes in $G \circ H'$ the vertex $(u_p,v)$ from any vertex $(u_t,w)$ such that $u_p$ and $u_t$ are true twins in $G$, whereas in the second case, no vertex $(x,y) \in B$ distinguishes in $G' \circ H''$ the vertex $(u_p,v)$ from any vertex $(u_f,w)$ such that $u_p$ and $u_f$ are false twins in $G'$. In either case, we have a contradiction with the fact that $B$ is a simultaneous metric basis of ${\cal G}\circ{\cal H}$. Thus, for every $u_p \in V_M({\cal G})$, we have that $\vert B_p \vert = \vert B'_p \vert \geq \Sd_A({\cal H})+1$. In conclusion,

\begin{displaymath}
\begin{array}{rcl}
\Sd({\cal G} \circ {\cal H})&=&\vert B \vert = \displaystyle{\sum_{u_p \in V_1-V_M({\cal G})}}\vert B_p \vert + \displaystyle{\sum_{u_p \in V_M({\cal G})}} \vert B_p \vert \geq\\
&\geq&\displaystyle{\sum_{u_p \in V_1-V_M({\cal G})}}\Sd_A({\cal H}) + \displaystyle{\sum_{u_p \in V_M({\cal G})}}(\Sd_A({\cal H})+1)=\\
&=&\vert V_1-V_M({\cal G}) \vert \cdot \Sd_A({\cal H})+\vert V_M({\cal G}) \vert \cdot (\Sd_A({\cal H})+1)=\\
&=&\vert V_1 \vert \cdot \Sd_A({\cal H})+\ \vert V_M({\cal G}) \vert.
\end{array}
\end{displaymath}

In order to prove the upper bound, consider the partition $\{V_M({\cal G}),V'_1,V''_1\}$ of $V_1$, where $V'_1=\{u:\;u \in V_T(G)\textrm{ for some }G \in {\cal G}\}$. Since ${\cal B}_1({\cal H})$ and ${\cal B}_2({\cal H})$ are disjoint, for any $B_1 \in {\cal B}_1({\cal H})$ and $B_2 \in {\cal B}_2({\cal H})$, there exist up to $k_2$ vertices $v_{p_1},v_{p_2},\ldots,v_{p_r} \in V_2-B_1$ such that $B_1 \cap N_{H}(v_{p_i})=\emptyset$ for some $H \in {\cal H}$ and up to $k_2$ vertices $v_{q_1},v_{q_2},\ldots,v_{q_s} \in V_2-B_2$ such that $B_2 \subseteq N_{H}(v_{q_i})$ for some $H \in {\cal H}$. We define the sets $B'_1=B_1 \cup \{v_{p_1},v_{p_2},\ldots,v_{p_r}\}$ and $B'_2=B_2 \cup \{v_{q_1},v_{q_2},\ldots,v_{q_s}\}$, which are simultaneous adjacency generators for ${\cal H}$ that are also dominating sets of every $H \in {\cal H}$ and satisfy $B'_1 \nsubseteq N_{H}(w)$ and $B'_2 \nsubseteq N_{H}(w)$ for every $w \in V_2$ and every $H \in {\cal H}$.

Consider one $B_1 \in {\cal B}_1({\cal H})$  such that $|B'_1|$ is minimum and any $B_2 \in {\cal B}_2({\cal H})$. We define the set $S_1=(V'_1 \times B_1) \cup (V''_1 \times B_2) \cup (V_M({\cal G}) \times B'_1)$. Likewise, consider one $B_2 \in {\cal B}_2({\cal H})$ such that $|B'_2|$ is minimum and any $B_1 \in {\cal B}_1({\cal H})$. We define the set $S_2=(V'_1 \times B_1) \cup (V''_1 \times B_2) \cup (V_M({\cal G}) \times B'_2)$. Finally, consider a pair of simultaneous adjacency bases $B_1 \in {\cal B}_1({\cal H})$ and $B_2 \in {\cal B}_2({\cal H})$ such that $|B_1 \cup B_2|$ is minimum. As $\vert B_1 \vert = \vert B_2 \vert$, we have that $\vert B_2 - B_1 \vert = \vert B_1 - B_2 \vert$ and is also minimum. We define the set $S_3=(V'_1 \times B_1) \cup (V''_1 \times B_2) \cup (V_M({\cal G}) \times (B_1 \cup B_2))$. Now, recall that for every $G \in {\cal G}$ the sets $S=(V_T(G) \times B_1) \cup ((V_1-V_T(G)) \times B_2)$ and $S'=((V_1-V_F(G)) \times B_1) \cup (V_F(G) \times B_2)$ are metric generators for every $G \circ H \in {\cal G}\circ{\cal H}$. Clearly, $S \subseteq S_1$ or $S' \subseteq S_1$, whereas $S \subseteq S_2$ or $S' \subseteq S_2$, and $S \subseteq S_3$ or $S' \subseteq S_3$, so we have that $S_1$, $S_2$ and $S_3$ are simultaneous metric generators for ${\cal G}\circ{\cal H}$. Thus, 

\begin{displaymath}
\begin{array}{rcl}
\Sd({\cal G} \circ {\cal H})&\le&\min\{\vert S_1 \vert, \vert S_2 \vert, \vert S_3 \vert\}=\\
&=&\vert V_1-V_M({\cal G}) \vert \cdot \Sd_A({\cal H}) +\\
&&+ \vert V_M({\cal G}) \vert \cdot \min \left\{\displaystyle{\min_{B_1 \in {\cal B}_1({\cal H})}}\{\vert B'_1 \vert\},\displaystyle{\min_{B_2 \in {\cal B}_2({\cal H})}}\{\vert B'_2 \vert\}, \underset{\underset{B_2 \in {\cal B}_2({\cal H})}{_{B_1 \in {\cal B}_1({\cal H})}}}{\min} \{\vert B_1 \cup B_2 \vert\}\right\}\leq\\
&\leq&\vert V_1-V_M({\cal G}) \vert \cdot \Sd_A({\cal H}) + \vert V_M({\cal G}) \vert \cdot \left(\Sd_A({\cal H})+\zeta({\cal H})\right)=\\
&=&\vert V_1 \vert \cdot \Sd_A({\cal H}) + \zeta({\cal H}) \cdot \vert V_M({\cal G}) \vert.
\end{array}
\end{displaymath}

As in the previous cases, by exchanging the roles of ${\cal B}_1$ and ${\cal B}_2$ for $\overline{\cal H}$ and proceeding in an analogous manner as above, we obtain that $$\vert V_1 \vert \cdot \Sd_A({\cal H})+\vert V_M({\cal G}) \vert \leq \Sd({\cal G} \circ \overline{\cal H}) \leq \vert V_1 \vert \cdot \Sd_A({\cal H}) + \zeta({\cal H}) \cdot \vert V_M({\cal G}) \vert.$$

The proof is thus complete.
\end{proof}

We now analyse the different cases described in Theorem~\ref{upperBoundFamilyLex1FamGFamH}. First, note that if $\zeta({\cal H})=1$, then Equation~(\ref{eqIneqSdA}) becomes an equality. In particular, $\zeta({\cal H})=1$ for every ${\cal H}=\{H\}$. Additionally, if there exists a simultaneous adjacency basis $B_1 \in {\cal B}_1({\cal H})$ such that one vertex $v \in V_2-B_1$ satisfies $B_1 \cap N_{H}(v)=\emptyset$ for every $H \in {\cal H}$, then $\zeta({\cal H})=1$. In an analogous manner, if there exists a simultaneous adjacency basis $B_2 \in {\cal B}_2({\cal H})$ such that one vertex $v \in V_2-B_2$ satisfies $B_2 \subseteq N_{H}(v)$ for every $H \in {\cal H}$, then $\zeta({\cal H})=1$. Finally, if there exist two simultaneous adjacency bases $B_1 \in {\cal B}_1({\cal H})$ and $B_2 \in {\cal B}_2({\cal H})$ such that $|B_1 \cup B_2| = \Sd_A({\cal H})+1$, then $\zeta({\cal H})=1$.

Next, we discuss Equation~(\ref{eqFirstEqSdA}). First, note that $V_M(\{G\})=\emptyset$ for every graph $G$. Now, we analyse several non-trivial conditions under which a graph family ${\cal G}$ composed by connected graphs on a common vertex set satisfies $V_M({\cal G})=\emptyset$. Consider two vertices $u$ and $v$ that are true twins in some graph $G$, and a vertex $x \in V(G)-\{u,v\}$ such that $x \sim u$ and $x \sim v$. We have that $\langle\{u,v,x\}\rangle_G \cong C_3$. This fact allows us to characterize a large number of families composed by true-twins-free graphs, for which $V_M({\cal G})=\emptyset$.

\begin{remark}\label{cond3VMEmpty}
Let ${\cal G}$ be a graph family on a common vertex set, such that every $G\in {\cal G}$ is a tree or satisfies $\mathtt{g}(G)\ge 4$. Then, $V_M({\cal G})=\emptyset$.
\end{remark}

In particular, for families composed by path or cycle graphs of order greater than or equal to four, not only all members are true-twins-free, but they are also false-twins-free. Moreover, families composed by hypercubes of degree $r \ge 2$ satisfy that all their members have girth four.

We now study the behaviour of $V_M({\cal H})$ for ${\cal H} \subseteq {\cal G}_B(G)$, where $B$ is an adjacency basis of $G$.

\begin{remark}\label{cond1VMEmpty}
For every adjacency basis $B$ of a graph $G$, and every family ${\cal H} \subseteq {\cal G}_B(G)$, $$V_M({\cal H})=\emptyset.$$
\end{remark}

\begin{proof}
Let $B$ be an adjacency basis of $G$. Consider a pair of vertices $x,y \in B$. By the construction of ${\cal G}_B(G)$, we have that in every $H \in {\cal H}$ either $x$ and $y$ are true twins, or they are false twins, or they are not twins. Moreover, since $B$ is a simultaneous adjacency generator for ${\cal H}$, no pair of vertices $x,y \in V(G)-B$ are twins in any $H \in {\cal H}$. Finally, consider two vertices $x \in B$ and $y \in V(G)-B$. If there exist graphs $H_1,H_2,\ldots,H_k \in {\cal H}$ where $N_{H_i}(x)=N_{H_i}(y)$, $i \in \{1,\ldots,k\}$, we have that, by the construction of ${\cal G}_B(G)$, either $x \sim y$ in every $H_i$, $i \in \{1,\ldots,k\}$, or $x \nsim y$ in every $H_i$, $i \in \{1,\ldots,k\}$. Hence, $x$ and $y$ are true twins in every $H_i$, $i \in \{1,\ldots,k\}$, or they are false twins in every $H_i$, $i \in \{1,\ldots,k\}$. In consequence, $V_M({\cal H})=\emptyset$.
\end{proof}

We now discuss several cases where a graph family ${\cal H}$ satisfies ${\cal B}_1({\cal H}) \cap {\cal B}_2({\cal H}) \neq \emptyset$. First, we introduce an auxiliary result.

\begin{lemma}\label{lemmaExistAdjBDominatingCyclePath}
Let $P_n$ and $C_n$ be a path and a cycle graph of order $n \ge 7$. If $n \mod 5 \in \{0,2,4\}$, then there exist adjacency bases of $P_n$ and $C_n$ that are dominating sets.
\end{lemma}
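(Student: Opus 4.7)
The plan is to exhibit a single explicit vertex set
\[
B \;=\; \{v_j : 1\le j\le n,\ j\equiv 2 \text{ or } 4 \pmod 5\}
\]
and show that, for every $n\ge 7$ with $n\bmod 5\in\{0,2,4\}$, $B$ is simultaneously an adjacency basis and a dominating set of both $P_n$ and $C_n$. Writing $n=5k+r$ with $r\in\{0,2,4\}$, a direct count yields $|B|=2k$, $2k+1$, or $2k+2$ respectively, which matches $\lfloor(2n+2)/5\rfloor=\dim_A(P_n)=\dim_A(C_n)$ from Remark~\ref{RemarkPathCycle}. Hence minimality is automatic, and the lemma will follow from verifying the adjacency-generator and domination properties for this single candidate set.

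Both properties can be checked in one sweep by computing, for each non-basis vertex $v_i$, its trace $N_G(v_i)\cap B$ in $G\in\{P_n,C_n\}$. Since the maximum degree is two, traces have at most two elements. A direct calculation by residue of $i$ modulo $5$ shows that, for interior non-basis vertices: $i\equiv 1\pmod 5$ yields trace $\{v_{i+1}\}$; $i\equiv 3\pmod 5$ yields trace $\{v_{i-1},v_{i+1}\}$; and $i\equiv 0\pmod 5$ yields trace $\{v_{i-1}\}$. Every such trace is nonempty, proving domination. Moreover, each singleton trace $\{v_j\}$ uniquely pinpoints its element $v_j\in B$, and each pair trace $\{v_{i-1},v_{i+1}\}$ uniquely pinpoints its midpoint $v_i$, so distinct interior non-basis vertices receive distinct traces.

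The delicate step is handling the boundary vertices $v_1$ and $v_n$ in the cycle case, where the wrap-around edge $v_1v_n$ can turn a path-case singleton trace into a pair trace. When $r=0$, $v_n\notin B$ and both $v_1$ and $v_n$ keep their path-case singleton traces $\{v_2\}$ and $\{v_{n-1}\}$, which fit the interior formula. When $r\in\{2,4\}$, $v_n\in B$ and $v_1$ acquires the pair trace $\{v_2,v_n\}$ in $C_n$. Since every interior pair trace consists of two $B$-indices that differ by exactly $2$, while $n-2\ge 5$ for $n\ge 7$, the pair $\{v_2,v_n\}$ cannot coincide with any bulk interior pair; the only competing pair that also involves $v_n$ is $\{v_{n-2},v_n\}$, which arises only when $r=4$ and is distinct from $\{v_2,v_n\}$ since $n\ge 7$ forces $n-2\ne 2$. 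Completing these explicit checks finishes the proof, and the only real obstacle is the bookkeeping of the boundary and wrap-around cases across the three residue classes; no deeper idea is required.
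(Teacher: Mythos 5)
Your proof is correct, but it takes a genuinely different route from the paper's. You exhibit the explicit periodic set $B=\{v_j:\ j\equiv 2 \text{ or } 4 \pmod 5\}$, observe that $|B|=\lfloor (2n+2)/5\rfloor=\dim_A(P_n)=\dim_A(C_n)$ by Remark~\ref{RemarkPathCycle} (so minimality is automatic once $B$ is an adjacency generator), and then verify both the generator property and domination by listing the traces $N_G(v_i)\cap B$ of the non-basis vertices, with explicit care for the wrap-around traces $\{v_2,v_n\}$ and $\{v_{n-2},v_n\}$ in $C_n$; your boundary analysis for the three residue classes is accurate. The paper argues structurally instead: it classifies the maximal runs of non-basis vertices of an adjacency basis of $C_n$ into 1-, 2- and 3-gaps, bounds the number of 2-gaps in each residue class, and shows that a set realizing the maximum number of 2-gaps (plus the appropriate number of 1-gaps) is an adjacency basis with no 3-gap, hence a dominating set; the path case is then obtained by transferring a dominating adjacency basis of the cycle $C'_n$ that avoids both leaves back to $P_n$. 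Your construction buys a single concrete set that is simultaneously an adjacency basis and dominating set of both $P_n$ and $C_n$ (precisely the kind of set invoked later in Remark~\ref{upperBoundFamilyLex1FamGFamH_Example1} and Proposition~\ref{upperBoundFamilyLex1_Example1}), at the cost of case-by-case bookkeeping, while the paper's gap argument is less explicit but yields structural information about which adjacency bases dominate and needs only one extra step for paths (note your $B$ contains $v_n$ when $n\bmod 5\in\{2,4\}$, so you rightly check $P_n$ directly rather than via the leaf-avoidance trick). One expository point: the assertion that each singleton trace ``uniquely pinpoints its element'' should be complemented by the observation that a basis vertex $v_j$ is the singleton trace of at most one non-basis vertex, since its neighbour on the other side either lies in $B$ or carries a pair trace (immediate from the residues $j\equiv 2,4\pmod 5$); this is routine and does not affect the correctness of your argument.
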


\begin{proof}
In $C_n$, consider the path $v_iv_{i+1}v_{i+2}v_{i+3}v_{i+4}$, where the subscripts are taken modulo $n$, and an adjacency basis $B$. If $v_i,v_{i+2} \in B$ and $v_{i+1} \notin B$, then $\{v_{i+1}\}$ is said to be a 1-gap of $B$. Likewise, if $v_i,v_{i+3} \in B$ and $v_{i+1},v_{i+2} \notin B$, then $\{v_{i+1},v_{i+2}\}$ is said to be a 2-gap of $B$ and if $v_i,v_{i+4} \in B$ and $v_{i+1},v_{i+2},v_{i+3} \notin B$, then $\{v_{i+1},v_{i+2},v_{i+3}\}$ is said to be a 3-gap of $B$. Since $B$ is an adjacency basis of $C_n$, it has no gaps of size 4 or larger and it has at most one 3-gap. Moreover, every 2- or 3-gap must be neighboured by two 1-gaps and the number of gaps of either size is at most $\dim_A(C_n)$. We now differentiate the following cases for $C_n$:

\begin{enumerate}
\item $n=5k$, $k\ge 2$. In this case, $\dim_A(C_n)=2k$ and $n-\dim_A(C_n)=3k$. Since any 2-gap must be neighboured by two 1-gaps, any adjacency basis has at most $k$ 2-gaps. Any set $B$ having exactly $k$ 2-gaps and exactly $k$ 1-gaps is an adjacency basis of $C_n$, as $|B|\ge 2k=\dim_A(C_n)$ and $|(N_{C_n}(x) \cap B) \triangledown (N_{C_n}(y) \cap B)| \ge 1$ for any pair of different vertices $x,y \in V(C_n)-B$. Since the number of vertices of $V(C_n)-B$ belonging to a 1- or 2-gap is $3k=n-|B|$, we deduce that $B$ has no 3-gaps, \textit{i.e.} it is a dominating set.
\item $n=5k+2$, $k\ge 1$. In this case, $\dim_A(C_n)=2k+1$ and $n-\dim_A(C_n)=3k+1$. As in the previous
case, any adjacency basis has at most $k$ 2-gaps. Moreover, any set $B$ having exactly $k$ 2-gaps and exactly $k+1$ 1-gaps is an adjacency basis of $C_n$, and the number of vertices of $V(C_n)-B$ belonging to a 1- or 2-gap is $3k+1=n-|B|$, so $B$ has no 3-gaps, \textit{i.e.} it is a dominating set.
\item $n=5k+4$, $k\ge 1$. In this case, $\dim_A(C_n)=2k+2$ and $n-\dim_A(C_n)=3k+2$. Assume that some adjacency basis $B$ has $k+1$ 2-gaps. Then, $B$ would have at least $k+1$ 1-gaps, making $|V(C_n)-B|\ge 3k+3$, which is a contradiction. So, any adjacency basis has at most $k$ 2-gaps. As in
the previous cases, any set $B$ having exactly $k$ 2-gaps and exactly $k+2$ 1-gaps is an adjacency basis of $C_n$, and the number of vertices of $V(C_n)-B$ belonging to a 1- or 2-gap is $3k+2=n-|B|$, so $B$ has no 3-gaps, \textit{i.e.} it is a dominating set.
\end{enumerate}

By the set of cases above, the result holds for $C_n$.

Now consider the path $P_n$, where $n \mod 5 \in \{0,2,4\}$, and let $C'_n$ be the cycle obtained from $P_n$ by joining its leaves $v_1$ and $v_n$ by an edge. Let $B$ be an adjacency basis of $C'_n$ which is also a dominating set and satisfies $v_1,v_n \notin B$ (at least one such $B$ exists). We have that every $u \in B$ and every $v \in V(P_n)-B$ satisfy $d_{C'_n,2}(u,v)=d_{P_n,2}(u,v)$, so $B$ is also an adjacency basis and a dominating set of $P_n$.

\end{proof}

The following results hold.

\begin{remark}\label{upperBoundFamilyLex1FamGFamH_Example1}
Let $P_n$ be a path graph of order $n \geq 7$, where $n \mod 5 \in \{0,2,4\}$, and let $C_n$ be the cycle graph obtained from $P_n$ by joining its leaves by an edge. Let $B$ be an adjacency basis of $P_n$ and $C_n$ which is also a dominating set of both. Then, every ${\cal H} \subseteq {\cal G}_B(P_n)\cup{\cal G}_B(C_n)$ such that $P_n \in {\cal H}$ or $C_n \in {\cal H}$ satisfies ${\cal B}_1({\cal H}) \cap {\cal B}_2({\cal H}) \neq \emptyset$.
\end{remark}

\begin{proof}
The existence of $B$ is a consequence of Lemma~\ref{lemmaExistAdjBDominatingCyclePath}. Since $P_n \in {\cal H}$ or $C_n \in {\cal H}$, we have that $B$ is a simultaneous adjacency basis of ${\cal H}$. Let $V=V(P_n)=V(C_n)$. By the definition of ${\cal G}_B$, we have that $\displaystyle{\bigcup_{v \in B}}N_{H}(v)=\displaystyle{\bigcup_{v \in B}}N_{P_n}(v)=V$ or $\displaystyle{\bigcup_{v \in B}}N_{H}(v)=\displaystyle{\bigcup_{v \in B}}N_{C_n}(v)=V$ for every $H \in {\cal H}$, so $B$ is a dominating set of every $H \in {\cal H}$. Moreover, by Lemma~\ref{LemmaDiameter>6orPathorCycle}, we have that $B \nsubseteq N_{P_n}(v)$ and $B \nsubseteq N_{C_n}(v)$ for every $v \in V$. Furthermore, by the definition of ${\cal G}_B$, we have that $B \cap N_{H}(v)=B \cap N_{P_n}(v)$ or $B \cap N_{H}(v)=B \cap N_{C_n}(v)$ for every $H \in {\cal H}$ and every $v\in V$, so $B \nsubseteq N_{H}(v)$ for every $H \in {\cal H}$ and every $v\in V$. In consequence, $B \in {\cal B}_1({\cal H}) \cap {\cal B}_2({\cal H})$, so the result holds.
\end{proof}

The following result is a direct consequence of Theorem~\ref{upperBoundFamilyLex1FamGFamH} and Remark~\ref{upperBoundFamilyLex1FamGFamH_Example1}.

\begin{proposition}\label{upperBoundFamilyLex1_Example1}
Let ${\cal G}$ be a family of connected graphs on a common vertex set $V$, let $P_n$ be a path graph of order $n \geq 7$, where $n~\mod~5 \in \{0,2,4\}$, and let $C_n$ be the cycle graph obtained from $P_n$ by joining its leaves by an edge. Let $B$ be an adjacency basis of $P_n$ and $C_n$ which is also a dominating set of both. Then, for every ${\cal H} \subseteq {\cal G}_B(P_n)\cup{\cal G}_B(C_n)$ such that $P_n \in {\cal H}$ or $C_n \in {\cal H}$, $$\Sd({\cal G} \circ {\cal H})=|V|\cdot\left\lfloor\frac{2n+2}{5}\right\rfloor.$$
\end{proposition}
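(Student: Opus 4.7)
The plan is to combine Remark~\ref{upperBoundFamilyLex1FamGFamH_Example1} with the equality case of Theorem~\ref{upperBoundFamilyLex1FamGFamH} to collapse $\Sd({\cal G}\circ{\cal H})$ to $|V|\cdot \Sd_A({\cal H})$, and then to compute $\Sd_A({\cal H})$ using Theorem~\ref{dimAPermFamily} together with Remark~\ref{RemarkPathCycle}. Concretely, Remark~\ref{upperBoundFamilyLex1FamGFamH_Example1} guarantees that $B\in {\cal B}_1({\cal H})\cap {\cal B}_2({\cal H})$, so this intersection is non-empty and, in particular, both ${\cal B}_1({\cal H})$ and ${\cal B}_2({\cal H})$ are non-empty. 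This is precisely the hypothesis needed to invoke equality~(\ref{eqFirstEqSdA}) of Theorem~\ref{upperBoundFamilyLex1FamGFamH}, yielding $\Sd({\cal G}\circ{\cal H}) = |V|\cdot \Sd_A({\cal H})$.

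Next, I would show $\Sd_A({\cal H})=\left\lfloor (2n+2)/5\right\rfloor$. For the upper bound, I would observe that the proof of Theorem~\ref{dimAPermFamily} establishes that whenever $B$ is an adjacency basis of a graph $G'$, every graph in ${\cal G}_B(G')$ has $B$ as an adjacency generator. Applying this separately to $G'=P_n$ and to $G'=C_n$, both of which have $B$ as an adjacency basis by hypothesis, one obtains that $B$ is an adjacency generator for every member of ${\cal G}_B(P_n)\cup {\cal G}_B(C_n)$, and hence a simultaneous adjacency generator for any ${\cal H}\subseteq {\cal G}_B(P_n)\cup {\cal G}_B(C_n)$. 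Thus $\Sd_A({\cal H})\le |B|$. For the matching lower bound, since $P_n\in {\cal H}$ or $C_n\in {\cal H}$, Remark~\ref{trivialBoundsSimAdjDim}(i) combined with Remark~\ref{RemarkPathCycle} gives $\Sd_A({\cal H})\ge \dim_A(P_n)=\dim_A(C_n)=\left\lfloor (2n+2)/5\right\rfloor=|B|$. Combining these two inequalities yields $\Sd_A({\cal H})=\left\lfloor (2n+2)/5\right\rfloor$, and substituting into the equality from the first paragraph proves the proposition.

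I do not expect a substantive obstacle, since almost all the heavy lifting has already been isolated into earlier results: the existence of a common adjacency basis and dominating set $B$ for $P_n$ and $C_n$ is Lemma~\ref{lemmaExistAdjBDominatingCyclePath}, the verification that ${\cal B}_1({\cal H})\cap {\cal B}_2({\cal H})\ne\emptyset$ is Remark~\ref{upperBoundFamilyLex1FamGFamH_Example1}, and the reduction to $|V|\cdot\Sd_A({\cal H})$ comes directly from Theorem~\ref{upperBoundFamilyLex1FamGFamH}. The only step that is not a verbatim invocation of a cited result is the mild extension of Theorem~\ref{dimAPermFamily} from a single base graph to a union ${\cal G}_B(P_n)\cup {\cal G}_B(C_n)$ sharing a common adjacency basis; this extension is immediate because the proof of Theorem~\ref{dimAPermFamily} handles each base graph independently and never mixes their edge structures.
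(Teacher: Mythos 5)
Your proposal is correct and follows the same route as the paper, which presents this proposition as a direct consequence of Remark~\ref{upperBoundFamilyLex1FamGFamH_Example1} (giving $B\in{\cal B}_1({\cal H})\cap{\cal B}_2({\cal H})$) and the equality case of Theorem~\ref{upperBoundFamilyLex1FamGFamH}, with $\Sd_A({\cal H})=|B|=\left\lfloor(2n+2)/5\right\rfloor$ coming from $B$ being a simultaneous adjacency basis of ${\cal H}$ together with Remark~\ref{RemarkPathCycle}. Your explicit extension of Theorem~\ref{dimAPermFamily} to the union ${\cal G}_B(P_n)\cup{\cal G}_B(C_n)$ is exactly the (unstated) detail the paper relies on, so no gap remains.
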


\begin{remark}\label{upperBoundFamilyLex1FamGFamH_Example2}
Let ${\cal H}$ be a graph family on a common vertex set $V$ of cardinality $|V|\ge 7$ such that every $H\in {\cal H}$ is a path graph, a cycle graph, $D(H)\ge 6$, or $\mathtt{g}(H) \ge 5$ and $\delta(H) \ge 3$. Let ${\cal H}'$ be a graph family on a common vertex set $V'$ of cardinality $|V'|\ge 7$ satisfying the same conditions as ${\cal H}$.  Then, ${\cal B}_1({\cal H}+{\cal H}') \cap {\cal B}_2({\cal H}+{\cal H}') \neq \emptyset$.
\end{remark}

\begin{proof}
As we discussed in the proof of Theorem~\ref{SimAdjDimJoinsFamilyCase1}, there exists a simultaneous metric basis $B$ of ${\cal H}+{\cal H}'$, which is also a simultaneous adjacency basis, such that the sets $W=B\cap V$ and $W'=B\cap V'$ satisfy $W \nsubseteq N_{H}(v)$ for every $H \in {\cal H}$ and every $v \in V$, and $W' \nsubseteq N_{H'}(w)$ for every $H' \in {\cal H}'$ and every $w \in V'$. In consequence, we have that $B \nsubseteq N_{H+H'}(v)$ for every $H+H' \in {\cal H}+{\cal H}'$ and every $v \in V \cup V'$. Moreover, every vertex in $V$ is dominated by every vertex in $W'$, whereas every vertex in $V'$ is dominated by every vertex in $W$, so $B$ is a dominating set for every $H+H' \in {\cal H}+{\cal H}'$. In consequence, $B \in {\cal B}_1({\cal H}+{\cal H}') \cap {\cal B}_2({\cal H}+{\cal H}')$, so the result holds.
\end{proof}

By an analogous reasoning, Theorems~\ref{dimAPermFamily} and  \ref{SimAdjDimJoinsFamilyCase1} lead to the next result.

\begin{remark}\label{upperBoundFamilyLex1FamGFamH_Example3}
Let $H$ be a graph of order $n \ge 7$ which is a path graph, or a cycle graph, or satisfies $D(H)\ge 6$, or $\mathtt{g}(H) \ge 5$ and $\delta(H) \ge 3$. Let $H'$ be a graph of order $n' \ge 7$ that satisfies the same conditions as $H$. Let $B$ and $B'$ be adjacency bases of $H$ and $H'$, respectively. Then, any pair of families ${\cal H} \subseteq {\cal G}_B(H)$ and ${\cal H}' \subseteq {\cal G}_{B'}(H')$ such that $H \in {\cal H}$ and $H' \in {\cal H}'$ satisfies ${\cal B}_1({\cal H}+{\cal H}') \cap {\cal B}_2({\cal H}+{\cal H}') \neq \emptyset$.
\end{remark}

The two following results are direct consequences of Theorem~\ref{upperBoundFamilyLex1FamGFamH} and Remarks~\ref{upperBoundFamilyLex1FamGFamH_Example2} and~\ref{upperBoundFamilyLex1FamGFamH_Example3}.

\begin{proposition}\label{upperBoundFamilyLex1_Example2}
Let ${\cal G}$ be a family of connected graphs on a common vertex set $V_1$. Let ${\cal H}$ be a graph family on a common vertex set $V_2$ of cardinality $|V_2|\ge 7$ such that every $H\in {\cal H}$ is a path graph, a cycle graph, $D(H)\ge 6$, or $\mathtt{g}(H) \ge 5$ and $\delta(H) \ge 3$. Let ${\cal H}'$ be a graph family on a common vertex set $V'_2$ of cardinality $|V'_2|\ge 7$ satisfying the same conditions as ${\cal H}$.  Then, $$\Sd({\cal G} \circ ({\cal H}+{\cal H}'))=|V_1| \cdot \Sd_A({\cal H})+|V_1| \cdot \Sd_A({\cal H}').$$
\end{proposition}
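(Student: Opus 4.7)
The plan is to combine three tools that have already been established: the characterization of when equality holds in Theorem~\ref{upperBoundFamilyLex1FamGFamH}, the observation in Remark~\ref{upperBoundFamilyLex1FamGFamH_Example2} about the join family ${\cal H}+{\cal H}'$, and the additivity of the simultaneous adjacency dimension under joins (the corollary following Theorem~\ref{SimAdjDimJoinsFamilyCase1}, together with the fact that $\Sd({\cal H}+{\cal H}')=\Sd_A({\cal H}+{\cal H}')$ since every member of ${\cal H}+{\cal H}'$ has diameter two).

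First I would verify the hypotheses of Equation~(\ref{eqFirstEqSdA}) of Theorem~\ref{upperBoundFamilyLex1FamGFamH} when applied to the pair $({\cal G},\,{\cal H}+{\cal H}')$. Remark~\ref{upperBoundFamilyLex1FamGFamH_Example2} guarantees directly that
\[
{\cal B}_1({\cal H}+{\cal H}')\cap{\cal B}_2({\cal H}+{\cal H}')\neq \emptyset,
\]
so in particular ${\cal B}_1({\cal H}+{\cal H}')$ and ${\cal B}_2({\cal H}+{\cal H}')$ are both nonempty, which is the standing assumption of Theorem~\ref{upperBoundFamilyLex1FamGFamH}, and the second alternative in the equality case is fulfilled. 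Applying that theorem therefore yields
\[
\Sd({\cal G}\circ({\cal H}+{\cal H}'))=|V_1|\cdot\Sd_A({\cal H}+{\cal H}').
\]

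Next I would evaluate $\Sd_A({\cal H}+{\cal H}')$. Since every $H\in{\cal H}$ is a path, a cycle, has $D(H)\ge 6$, or has $\mathtt{g}(H)\ge 5$ and $\delta(H)\ge 3$ (and similarly for ${\cal H}'$), the corollary stated just after Theorem~\ref{SimAdjDimJoinsFamilyCase1} applies, giving
\[
\Sd({\cal H}+{\cal H}')=\Sd_A({\cal H})+\Sd_A({\cal H}').
\]
Because every graph of the form $H+H'$ has diameter at most two, the simultaneous metric and simultaneous adjacency dimensions of ${\cal H}+{\cal H}'$ coincide, so $\Sd_A({\cal H}+{\cal H}')=\Sd_A({\cal H})+\Sd_A({\cal H}')$. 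Substituting this into the previous display gives the claimed equality
\[
\Sd({\cal G}\circ({\cal H}+{\cal H}'))=|V_1|\cdot\Sd_A({\cal H})+|V_1|\cdot\Sd_A({\cal H}').
\]

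There is no real obstacle in this argument; the statement is genuinely a direct consequence of previously proved results. The only point requiring a small amount of care is making explicit the passage $\Sd({\cal H}+{\cal H}')=\Sd_A({\cal H}+{\cal H}')$, so that the additivity formula—originally phrased for the simultaneous metric dimension of the join—can legitimately feed the simultaneous adjacency quantity that appears in the right-hand side of Theorem~\ref{upperBoundFamilyLex1FamGFamH}.
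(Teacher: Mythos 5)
Your proposal is correct and follows essentially the same route the paper intends: Remark~\ref{upperBoundFamilyLex1FamGFamH_Example2} supplies ${\cal B}_1({\cal H}+{\cal H}')\cap{\cal B}_2({\cal H}+{\cal H}')\neq\emptyset$, so Equation~(\ref{eqFirstEqSdA}) of Theorem~\ref{upperBoundFamilyLex1FamGFamH} gives $\Sd({\cal G}\circ({\cal H}+{\cal H}'))=|V_1|\cdot\Sd_A({\cal H}+{\cal H}')$, and the additivity $\Sd_A({\cal H}+{\cal H}')=\Sd_A({\cal H})+\Sd_A({\cal H}')$ comes from Theorem~\ref{SimAdjDimJoinsFamilyCase1} (via Lemma~\ref{LemmaDiameter>6orPathorCycle}) together with the diameter-two identity $\Sd({\cal H}+{\cal H}')=\Sd_A({\cal H}+{\cal H}')$. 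Your explicit note about that last identity is exactly the small point the paper leaves implicit, so nothing is missing.
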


\begin{proposition}\label{upperBoundFamilyLex1_Example3}
Let ${\cal G}$ be a family of connected graphs on a common vertex set $V$. Let $H$ be a graph of order $n \ge 7$ which is a path graph, or a cycle graph, or satisfies $D(H)\ge 6$, or $\mathtt{g}(H) \ge 5$ and $\delta(H) \ge 3$. Let $H'$ be a graph of order $n' \ge 7$ that satisfies the same conditions as $H$. Let $B$ and $B'$ be adjacency bases of $H$ and $H'$, respectively. Then, for any pair of families ${\cal H} \subseteq {\cal G}_B(H)$ and ${\cal H}' \subseteq {\cal G}_{B'}(H')$ such that $H \in {\cal H}$ and $H' \in {\cal H}'$, $$\Sd({\cal G} \circ ({\cal H}+{\cal H}'))=|V| \cdot \dim_A(H)+|V| \cdot \dim_A(H').$$
\end{proposition}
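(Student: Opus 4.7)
The plan is to chain together three earlier results: Theorem \ref{dimAPermFamily} (which computes $\Sd_A$ of the families $\mathcal H$ and $\mathcal H'$), Theorem \ref{SimAdjDimJoinsFamilyCase1} (which converts the join $\mathcal H + \mathcal H'$ into a sum of two $\Sd_A$'s), and Theorem \ref{upperBoundFamilyLex1FamGFamH} (which, under the appropriate hypothesis on $\mathcal H + \mathcal H'$, turns the lexicographic product into $|V|\cdot\Sd_A(\mathcal H + \mathcal H')$). The required hypothesis for Theorem \ref{upperBoundFamilyLex1FamGFamH} is handed to us by Remark \ref{upperBoundFamilyLex1FamGFamH_Example3}, so the only genuine work is to check that the hypothesis of Theorem \ref{SimAdjDimJoinsFamilyCase1} holds.

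First, since $H$ and $H'$ belong to $\mathcal H$ and $\mathcal H'$ respectively, Theorem \ref{dimAPermFamily} gives $\Sd_A(\mathcal H)=\dim_A(H)$ and $\Sd_A(\mathcal H')=\dim_A(H')$, with $B$ a simultaneous adjacency basis of $\mathcal H$ and $B'$ one of $\mathcal H'$. Next, because $H$ is a path, cycle, or satisfies $D(H)\ge 6$, or $\mathtt{g}(H)\ge 5$ and $\delta(H)\ge 3$, Lemma \ref{LemmaDiameter>6orPathorCycle} ensures that $B\not\subseteq N_H(v)$ for every $v\in V(H)$. I then invoke Remark \ref{Remark_induced_subgraphs_inG'}: for any $\tilde H\in\mathcal H\subseteq\mathcal G_B(H)$, the weakly induced subgraph $\langle B_{\tilde H}\rangle_w$ is isomorphic to $\langle B_H\rangle_w$ via a bijection fixing $B$, which forces $B\not\subseteq N_{\tilde H}(v)$ for every $v\in V(\tilde H)$ as well. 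Therefore the hypothesis of Theorem \ref{SimAdjDimJoinsFamilyCase1} is met, and
\[
\Sd(\mathcal H + \mathcal H')=\Sd_A(\mathcal H)+\Sd_A(\mathcal H')=\dim_A(H)+\dim_A(H').
\]
Since every member of $\mathcal H + \mathcal H'$ has diameter two, this value equals $\Sd_A(\mathcal H + \mathcal H')$.

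Finally, by Remark \ref{upperBoundFamilyLex1FamGFamH_Example3} we have $\mathcal B_1(\mathcal H + \mathcal H')\cap\mathcal B_2(\mathcal H + \mathcal H')\ne\emptyset$, so equation (\ref{eqFirstEqSdA}) of Theorem \ref{upperBoundFamilyLex1FamGFamH} applies and yields
\[
\Sd(\mathcal G\circ(\mathcal H + \mathcal H'))=|V|\cdot\Sd_A(\mathcal H + \mathcal H')=|V|\cdot\dim_A(H)+|V|\cdot\dim_A(H'),
\]
as claimed. The only non-mechanical step is the transfer of the property $B\not\subseteq N_H(v)$ from $H$ to every graph of $\mathcal H$, and this is handled cleanly by Remark \ref{Remark_induced_subgraphs_inG'}; beyond that, the argument is essentially a bookkeeping combination of results already in place.
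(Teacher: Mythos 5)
Your proposal is correct and follows essentially the same route as the paper, which presents this proposition as a direct consequence of Theorem~\ref{upperBoundFamilyLex1FamGFamH} together with Remark~\ref{upperBoundFamilyLex1FamGFamH_Example3}, with the value $\Sd_A({\cal H}+{\cal H}')=\dim_A(H)+\dim_A(H')$ coming from the combination of Theorem~\ref{dimAPermFamily}, Theorem~\ref{SimAdjDimJoinsFamilyCase1} and Lemma~\ref{LemmaDiameter>6orPathorCycle}. Your explicit transfer of the property $B\not\subseteq N_H(v)$ to every graph in ${\cal G}_B(H)$ via Remark~\ref{Remark_induced_subgraphs_inG'} is exactly the mechanism the paper uses (in the proof of Theorem~\ref{ThmFamiliaK1+H}), so you have merely filled in details the paper leaves implicit.
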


We now analyse several conditions under which a graph family ${\cal G}$ composed by connected graphs on a common vertex set satisfies $V_M({\cal G}) \neq \emptyset$ and, in some cases, we exactly determine the value of $V_M({\cal G})$. It is simple to see that any graph of the form $K_t+G$, $t \ge 2$, satisfies $V(K_t) \subseteq v^*$ for some $v^* \in T(K_t+G)$. Likewise, any graph of the form $N_t+G$, $t \ge 2$, satisfies $V(N_t) \subseteq v^*$ for some $v^* \in F(N_t+G)$. Moreover, any complete graph $K_n$, $n \ge 2$, satisfies $T(G)=\{V(K_n)\}$. The next results are direct consequences of these facts.

\begin{remark}\label{cond2VMNotEmpty}
Let ${\cal G}=\{G_1,G_2,\ldots,G_k\}$ be a family of connected graphs on a common vertex set $V$ such that, for some $i \in \{1,\ldots,k\}$, $G_i=N_t+G'$, where $N_t$ is an empty graph on the vertex set $V' \subset V$, $|V'| \ge 2$, and $G'=(V-V',E')$. If, for some $j \in \{1,\ldots,k\}-\{i\}$, $G_j=K_t+G''$, where $K_t$ is a complete graph on the vertex set $V'$ and $G''=(V-V',E'')$, then $V_M({\cal G}) \neq \emptyset$.
\end{remark}

\begin{corollary}\label{cond2VMNotEmptyPartCase1}
Let ${\cal G}=\{G_1,G_2,\ldots,G_k\}$ be a family composed by path or cycle graphs on a common vertex set $V_1$ of size $n \ge 4$, and let $\{K_t,N_t\}$ be a family composed by a complete and an empty graph on a common vertex set $V_2$ of size $t \ge 2$. Then every ${\cal H} \subseteq \{N_t+G_1,N_t+G_2,\ldots,N_t+G_k\}$, ${\cal H}\neq\emptyset$, and every ${\cal H}' \subseteq \{K_{n+t}, K_t+G_1,K_t+G_2,\ldots,K_t+G_k\}$, ${\cal H}'\neq\emptyset$, satisfy $V_M({\cal H} \cup {\cal H}')=V_2$.
\end{corollary}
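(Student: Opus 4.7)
The plan is to prove the equality $V_M({\cal H}\cup{\cal H}')=V_2$ by establishing the two inclusions separately. The main technical input is the fact that $P_n$ and $C_n$ with $n\ge 4$ contain neither true nor false twins, i.e., any two distinct vertices $u,w$ of such a graph satisfy $N(u)\ne N(w)$ and $N[u]\ne N[w]$; this is either invoked as a standard observation or verified by direct inspection of neighbor sets.

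For $V_2\subseteq V_M({\cal H}\cup{\cal H}')$, I would fix an arbitrary $u\in V_2$ and, using the nonemptiness of ${\cal H}$ and ${\cal H}'$, exhibit one member of each family in which $u$ admits a false and a true twin, respectively. For any $H=N_t+G_i\in{\cal H}$ every vertex of $V_2$ has open neighborhood equal to $V_1$ in $H$, so $u$ shares this neighborhood with each of the $t-1\ge 1$ other vertices of $V_2$ and hence $u\in V_F(H)$. For any $H'\in{\cal H}'$: if $H'=K_{n+t}$ then every pair of vertices are true twins; if $H'=K_t+G_j$ then the vertices of $V_2$ form a clique each joined to all of $V_1$, so they share the closed neighborhood $V_1\cup V_2$ and $u\in V_T(H')$. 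Either way $u\in V_M({\cal H}\cup{\cal H}')$.

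For the reverse inclusion, fix $u\in V_1$; it suffices to verify that $u\notin V_F(G)$ for every $G\in{\cal H}\cup{\cal H}'$, since this rules out $u\in V_M$. Let $w\ne u$ be a candidate false twin, so that we would need $N_G(u)=N_G(w)$. If $w\in V_1$, then in $G=N_t+G_i$ and in $G=K_t+G_j$ the equality $N_G(u)=N_G(w)$ reduces to $N_{G_i}(u)=N_{G_i}(w)$, respectively $N_{G_j}(u)=N_{G_j}(w)$, which fails by the twin-freeness of paths and cycles of order $n\ge 4$; and in $G=K_{n+t}$ one has $w\in N_G(u)\setminus N_G(w)$. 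If $w\in V_2$, then in $G=K_{n+t}$ or $G=K_t+G_j$ one again has $w\in N_G(u)\setminus N_G(w)$, while in $G=N_t+G_i$ the set $V_2$ lies in $N_G(u)$ but $N_G(w)=V_1$ is disjoint from $V_2$. In every case $N_G(u)\ne N_G(w)$, so $u$ has no false twin in $G$, completing the inclusion.

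The main obstacle is simply the bookkeeping of the case analysis above---three graph types $N_t+G_i$, $K_t+G_j$, $K_{n+t}$ crossed with the two possible locations of the candidate twin, $V_1$ versus $V_2$---rather than any single subtle step. The only substantive underlying ingredient is the twin-freeness of $P_n$ and $C_n$ for $n\ge 4$, which makes the reduction to the neighborhood comparison in $G_i$ or $G_j$ instantaneous once the neighborhood of a vertex in a join graph is computed.
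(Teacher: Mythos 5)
Your argument follows essentially the same route as the paper's (the paper treats this corollary as a direct consequence of the twin structure of join graphs stated just before Remark~\ref{cond2VMNotEmpty}, together with the assertion, made right after Remark~\ref{cond3VMEmpty}, that path and cycle graphs of order at least four are both true- and false-twins-free): you get $V_2\subseteq V_M({\cal H}\cup{\cal H}')$ from $V_2$ being a false twin class in each $N_t+G_i$ and a true twin class in each $K_t+G_j$ and in $K_{n+t}$, and you get the reverse inclusion by reducing to twin-freeness of the $G_i$. Your refinement of only having to rule out \emph{false} twins of vertices of $V_1$ is a good and in fact necessary precision, since in $K_{n+t}$ every vertex of $V_1$ does have true twins.

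There is, however, one step that fails as written: the fact you lean on, that $C_n$ has no false twins for $n\ge 4$, is false for $C_4$ (which is $K_{2,2}$: each pair of antipodal vertices is a pair of false twins). So the claim ``$N_{G_i}(u)=N_{G_i}(w)$ fails by twin-freeness'' breaks when some $G_i\cong C_4$ (forcing $n=4$). In that situation every vertex of $V_1$ lies in a false twin class of $N_t+C_4$, and if moreover $K_{n+t}\in{\cal H}'$, every vertex of $V_1$ also lies in a true twin class of $K_{n+t}$, so $V_M({\cal H}\cup{\cal H}')=V_1\cup V_2\neq V_2$; no argument can repair this, because the stated equality itself fails in that corner case. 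This defect is inherited from the paper (whose false-twins-freeness assertion after Remark~\ref{cond3VMEmpty} overlooks $C_4$); outside of it --- i.e.\ whenever $n\ge 5$, or no $G_i\cong C_4$ contributes a member to ${\cal H}$, or $K_{n+t}\notin{\cal H}'$ --- your case analysis is correct and matches the paper's intended proof.
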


We now analyse cases of families containing a graph and its complement.

\begin{remark}\label{cond1VMNotEmpty}
Let $G$ be a connected graph such that $|T(G)|\ge 1$ or $|F(G)|\ge 1$, and $\overline{G}$ is connected. Then any family ${\cal G}$ composed by connected graphs on a common vertex set such that $G \in {\cal G}$ and $\overline{G} \in {\cal G}$ satisfies $V_M({\cal G})\neq \emptyset$.
\end{remark}

\begin{proof}
First assume that $|T(G)|\ge 1$. Consider a true twin equivalence class $v_1^*=\{v_1,v_2,\ldots,v_t\} \in T(G)$. For every pair of vertices $v_i,v_j\in v_1^*$, we have that $N_{\overline{G}}(v_i)=N_{\overline{G}}(v_j)$ and $v_i \nsim_{\overline{G}} v_j$. In consequence, $v_1^*$ is a false twin equivalence class of $\overline{G}$. Now assume that $|F(G)|\ge 1$ and consider a false twin equivalence class $w_1^*=\{w_1,w_2,\ldots,w_f\} \in F(G)$. For every pair of vertices $w_i,w_j\in w_1^*$, we have that $N_{\overline{G}}[w_i]=N_{\overline{G}}[w_j]$, so $w_1^*$ is a true twin equivalence class of $\overline{G}$. In consequence, $V_T(G) \cup V_F(G) \subseteq V_M({\cal G})$, so the result follows.
\end{proof}

\begin{corollary}\label{cond1VMNotEmptyPartCase1}
For every connected graph $G$ such that $\overline{G}$ is connected, $V_M(\{G,\overline{G}\})=V_T(G) \cup V_F(G)$.
\end{corollary}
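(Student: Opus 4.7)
The plan is to expand the definition of $V_M$ for the two-element family $\{G,\overline{G}\}$, which gives
\[
V_M(\{G,\overline{G}\}) = \bigl(V_T(G) \cup V_T(\overline{G})\bigr) \cap \bigl(V_F(G) \cup V_F(\overline{G})\bigr).
\]
Remark~\ref{cond1VMNotEmpty} already supplies the inclusion $V_T(G) \cup V_F(G) \subseteq V_M(\{G,\overline{G}\})$, so only the reverse inclusion remains, and it will suffice to show that $V_T(\overline{G}) = V_F(G)$ and $V_F(\overline{G}) = V_T(G)$.

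The key observation, already implicit in the proof of Remark~\ref{cond1VMNotEmpty}, is that complementation swaps the two types of twins. Concretely, if $u,v$ are true twins in $G$, then $uv \in E(G)$ and $N_G(u)\setminus\{v\}=N_G(v)\setminus\{u\}$, which translates under complementation to $uv \notin E(\overline{G})$ and $N_{\overline{G}}(u)=N_{\overline{G}}(v)$, making $\{u,v\}$ false twins in $\overline{G}$; the analogous argument handles false twins in $G$. This gives $V_T(G) \subseteq V_F(\overline{G})$ and $V_F(G) \subseteq V_T(\overline{G})$. Applying the same reasoning to $\overline{G}$ and using $\overline{\overline{G}}=G$ yields the reverse inclusions, so the two identities $V_T(\overline{G}) = V_F(G)$ and $V_F(\overline{G}) = V_T(G)$ hold.

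Plugging these identities into the set-theoretic expression above collapses it to
\[
V_M(\{G,\overline{G}\}) = \bigl(V_T(G) \cup V_F(G)\bigr) \cap \bigl(V_F(G) \cup V_T(G)\bigr) = V_T(G) \cup V_F(G),
\]
completing the proof. I do not anticipate any real obstacle here; the statement is essentially a set-theoretic consequence of Remark~\ref{cond1VMNotEmpty}, and the only tiny subtlety to double-check is that in both identifications one works with \emph{non-singleton} equivalence classes (as per the definitions of $V_T$ and $V_F$), but this is immediate since a class is non-singleton in $G$ if and only if it is non-singleton in $\overline{G}$.
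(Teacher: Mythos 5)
Your proof is correct and follows essentially the same route as the paper: the paper derives this corollary directly from Remark~\ref{cond1VMNotEmpty}, whose proof contains exactly the twin-swap observation ($V_T(\overline{G})=V_F(G)$ and $V_F(\overline{G})=V_T(G)$) that you use to get the reverse inclusion. Your explicit set-theoretic expansion of $V_M(\{G,\overline{G}\})$ and the remark about non-singleton classes being preserved just make precise what the paper leaves implicit.
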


Finally, we analyse some examples of families ${\cal H}$ satisfying ${\cal B}_1({\cal H}) \cap {\cal B}_2({\cal H}) = \emptyset$. Consider the family ${\cal H}_5=\{P_5,C_5\}$, where $V(P_5)=V(C_5)=\{v_1,v_2,v_3,v_4,v_5\}$, $E(P_5)=\{v_1v_2,v_2v_3,v_3v_4,$ $v_4v_5\}$ and $E(C_5)=E(P_5) \cup \{v_1v_5\}$. We have that ${\cal B}_1({\cal H}_5)=\{\{v_1,v_5\},\{v_2,v_3\},$ $\{v_3,v_4\}\}$ and ${\cal B}_2({\cal H}_5)=\{\{v_2,v_4\}\}$, that is ${\cal B}_1({\cal H}_5) \cap {\cal B}_2({\cal H}_5) = \emptyset$. Likewise, ${\cal B}_1(\{P_5\})=\{\{v_1,v_5\},\{v_2,v_3\},\{v_3,v_4\}\}$ and ${\cal B}_2(\{P_5\})=\{\{v_2,v_4\}\}$, \textit{i.e.} ${\cal B}_1(\{P_5\}) \cap {\cal B}_2(\{P_5\}) = \emptyset$; whereas ${\cal B}_1(\{C_5\})=\{\{v_1,v_2\},\{v_1,v_5\},\{v_2,v_3\},$ $\{v_3,v_4\},\{v_4,v_5\}\}$ and ${\cal B}_2(\{C_5\})=\{\{v_1,v_3\},\{v_1,v_4\},\{v_2,v_4\},$ $\{v_2,v_5\},\{v_3,v_5\}\}$, \textit{i.e.} ${\cal B}_1(\{C_5\}) \cap {\cal B}_2(\{C_5\}) = \emptyset$. Moreover, the vertex $v_3$ satisfies $\{v_2,v_4\} \subseteq N_{P_5}(v_3)$ and $\{v_2,v_4\} \subseteq N_{C_5}(v_3)$, so $\zeta({\cal H})=1$ for every non-empty subfamily ${\cal H} \subseteq {\cal H}_5$.

Additionally, consider the family ${\cal H}_{ex}^{(n)}=\{H_1,H_2,H_3,H_4\}$ depicted in Figure~\ref{figAddFamilyH}. ${\cal H}_{ex}^{(n)}$ is defined on the common vertex set $V=\{v_1,\ldots,v_n,v_{n+1},\ldots,v_{n+6}\}$, $n \ge 7$, $n \mod 5 \in \{0,2,4\}$, and the dashed lines in the figure indicate that $H_i$ differs from $H_j$ in the fact of containing, or not, each one of the edges $v_1v_n$ and $v_{n+2}v_{n+4}$. Let $V_1=\{v_1,\ldots,v_n\}$ and $V_2=\{v_{n+1},\ldots,v_{n+6}\}$. We have that, for every $H \in {\cal H}_{ex}^{(n)}$, $\langle V_1 \rangle_H \cong P_n$ or $\langle V_1 \rangle_H \cong C_n$.
In consequence, for every non-empty subfamily ${\cal H} \subseteq {\cal H}_{ex}^{(n)}$, we have that $\Sd_A({\cal H})=\dim_A(P_n)+2=\dim_A(C_n)+2$, and every simultaneous adjacency basis $B$ has the form $B=B' \cup X$, where $X \subset V_2$ and $B'$ is a simultaneous adjacency basis of ${\cal H}'=\{\langle V_1 \rangle_{H} :\; H  \in {\cal H}\}$. Moreover, we have that ${\cal B}_1({\cal H})=\{B' \cup X\}$, where $B'$ is a simultaneous adjacency basis of ${\cal H}'$ that is also a dominating set of every $H' \in {\cal H}'$ (Lemma~\ref{lemmaExistAdjBDominatingCyclePath}, and the fact that two graphs in ${\cal H}'$ differ at most in the fact of containing, or not, the edge $v_1v_n$, guarantee the existence of such $B'$) and $X \in \{\{v_{n+2},v_{n+3}\},\{v_{n+3},v_{n+4}\},\{v_{n+3},v_{n+5}\},\{v_{n+3},v_{n+6}\},\{v_{n+5},v_{n+6}\}\}$. Likewise, ${\cal B}_2({\cal H})=\{B' \cup \{v_{n+2},v_{n+4}\}\}$, where $B'$ is a simultaneous adjacency basis of ${\cal H}'$ that is also a dominating set of every $H' \in {\cal H}'$. Clearly, ${\cal B}_1({\cal H}) \cap {\cal B}_2({\cal H})=\emptyset$. Moreover, for every $B \in {\cal B}_2({\cal H})$, the vertex $v_{n+1}$ satisfies $B \subseteq N_{H}(v_{n+1})$ for every $H \in {\cal H}$, so $\zeta({\cal H})=1$.

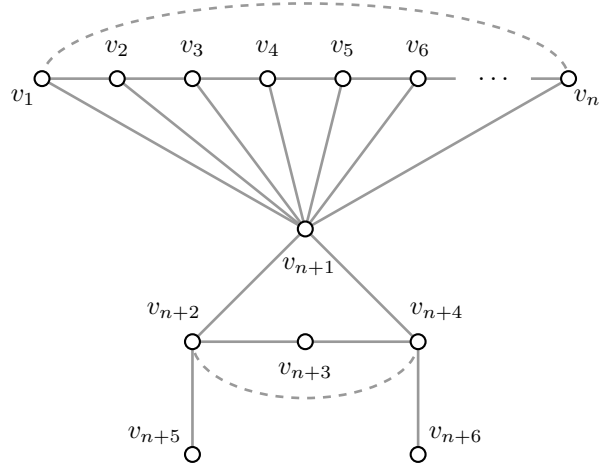
\begin{figure}[h]
\begin{center}
\begin{tikzpicture}[inner sep=0.7mm, place/.style={circle,draw=black,
fill=white,thick},dxx/.style={circle,draw=black!99,fill=black!99,thick},lxx/.style={circle,draw=black!99,densely dotted,fill=black!60,thick},transition/.style={rectangle,draw=black!50,fill=black!20,thick},line width=1pt,scale=0.5]

\coordinate (A) at (4,0);
\coordinate (B) at (10,0);
\coordinate (C) at (4,3);
\coordinate (D) at (7,3);
\coordinate (E) at (10,3);
\coordinate (F) at (7,6);

\coordinate (M) at (0,10);
\coordinate (N) at (2,10);
\coordinate (O) at (4,10);
\coordinate (P) at (6,10);
\coordinate (Q) at (8,10);
\coordinate (R) at (10,10);
\coordinate (R1) at (11,10);
\coordinate (S1) at (13,10);
\coordinate (S) at (14,10);

\draw[black!40] (A) -- (C) -- (D) -- (E) -- (B);
\draw[black!40] (C) -- (F) -- (E);
\draw[dashed,black!40] (E) arc (360:180:3 and 1.5);
\draw[black!40] (M) -- (N) -- (O) -- (P) -- (Q) -- (R) -- (R1);
\draw[black!40] (S1) -- (S);
\draw[dashed,black!40] (M) arc (180:0:7 and 2);
\draw[black!40] (F) -- (M);
\draw[black!40] (F) -- (N);
\draw[black!40] (F) -- (O);
\draw[black!40] (F) -- (P);
\draw[black!40] (F) -- (Q);
\draw[black!40] (F) -- (R);
\draw[black!40] (F) -- (S);

\node at (A) [place]  {};
\coordinate [label=center:{$v_{n+5}$}] (v17) at (3,0.5);
\node at (B) [place]  {};
\coordinate [label=center:{$v_{n+6}$}] (v14) at (11,0.5);
\node at (C) [place]  {};
\coordinate [label=center:{$v_{n+2}$}] (v18) at (3.5,3.8);
\node at (D) [place]  {};
\coordinate [label=center:{$v_{n+3}$}] (v12) at (7,2.2);
\node at (E) [place]  {};
\coordinate [label=center:{$v_{n+4}$}] (v15) at (10.5,3.8);
\node at (F) [place]  {};
\coordinate [label=center:{$v_{n+1}$}] (v16) at (7.1,5);

\node at (M) [place]  {};
\coordinate [label=center:{$v_1$}] (v24) at (-0.5,9.5);
\node at (N) [place]  {};
\coordinate [label=center:{$v_2$}] (v21) at (2,10.8);
\node at (O) [place]  {};
\coordinate [label=center:{$v_3$}] (v25) at (4,10.8);
\node at (P) [place]  {};
\coordinate [label=center:{$v_4$}] (v22) at (6,10.8);
\node at (Q) [place]  {};
\coordinate [label=center:{$v_5$}] (v32) at (8,10.8);
\node at (R) [place]  {};
\coordinate [label=center:{$v_6$}] (v38) at (10,10.8);
\coordinate [label=center:{$\ldots$}] (dots1) at (12,10);
\node at (S) [place]  {};
\coordinate [label=center:{$v_n$}] (v33) at (14.5,9.5);

\end{tikzpicture}

\end{center}
\caption{For $n \ge 7$, $n \mod 5 \in \{0,2,4\}$, every non-empty subfamily ${\cal H}$ of the family ${\cal H}_{ex}^{(n)}=\{H_1,H_2,H_3,H_4\}$ satisfies ${\cal B}_1({\cal H}) \cap {\cal B}_2({\cal H})=\emptyset$ and $\zeta({\cal H})=1$.}
\label{figAddFamilyH}
\end{figure}

The aforementioned facts, along with Corollaries~\ref{cond2VMNotEmptyPartCase1} and~\ref{cond1VMNotEmptyPartCase1}, allows us to obtain examples where Equation~(\ref{eqIneqSdA}) becomes an equality.

\begin{proposition}\label{secondEqualityLexFamGFamH_expl}
Let ${\cal G}=\{G_1,G_2,\ldots,G_k\}$ be a family composed by path or cycle graphs on a common vertex set $V_1$ of size $p \ge 4$, and let $\{K_t,N_t\}$ be a family composed by a complete and an empty graph on a common vertex set $V_2$ of size $t \ge 2$. Let ${\cal G}' \subseteq \{N_t+G_1,N_t+G_2,\ldots,N_t+G_k\}$, ${\cal G}'\neq\emptyset$, and let ${\cal G}'' \subseteq \{K_{n+t}, K_t+G_1,K_t+G_2,\ldots,K_t+G_k\}$, ${\cal G}''\neq\emptyset$. Then, the following assertions hold:
\begin{enumerate}[{\rm (i)}]
\item For every non-empty subfamily ${\cal H} \subseteq {\cal H}_5$, $$\Sd(({\cal G}' \cup {\cal G}'') \circ {\cal H})=|V_1 \cup V_2|\cdot\Sd_A({\cal H})+|V_M({\cal G}' \cup {\cal G}'')|=2p+3t.$$
\item For every $n \ge 7$, where $n \mod 5 \in \{0,2,4\}$, and every non-empty subfamily ${\cal H} \subseteq {\cal H}_{ex}^{(n)}$, $$\Sd(({\cal G}' \cup {\cal G}'')\circ {\cal H})=|V_1 \cup V_2|\cdot\Sd_A({\cal H})+|V_M({\cal G}' \cup {\cal G}'')|=(p+t)\cdot\left(\left\lfloor\frac{2n+2}{5}\right\rfloor+2\right)+t.$$
\end{enumerate}
\end{proposition}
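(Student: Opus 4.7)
The plan is to deduce both identities as direct consequences of Theorem~\ref{upperBoundFamilyLex1FamGFamH} applied to the outer family ${\cal F}:={\cal G}'\cup{\cal G}''$ (whose common vertex set has cardinality $p+t$) and the inner family ${\cal H}$. The strategy is to verify that we fall into the regime governed by the double inequality~(\ref{eqIneqSdA}) and then force equality by checking that $\zeta({\cal H})=1$ for the specific inner families under consideration.

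First I would establish $V_M({\cal F})\neq\emptyset$ and ${\cal B}_1({\cal H})\cap{\cal B}_2({\cal H})=\emptyset$. The first is a direct application of Corollary~\ref{cond2VMNotEmptyPartCase1}, which identifies $V_M({\cal F})=V_2$ and hence $|V_M({\cal F})|=t$. The second is already recorded in the paragraphs immediately preceding the proposition: for every non-empty ${\cal H}\subseteq{\cal H}_5$ the explicit enumerations of ${\cal B}_1(\{P_5\})$, ${\cal B}_2(\{P_5\})$, ${\cal B}_1(\{C_5\})$, ${\cal B}_2(\{C_5\})$ and ${\cal B}_1({\cal H}_5)$, ${\cal B}_2({\cal H}_5)$ show no common basis, and for every non-empty ${\cal H}\subseteq{\cal H}_{ex}^{(n)}$ the parametric description of ${\cal B}_1({\cal H})$ and ${\cal B}_2({\cal H})$ does the same. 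Theorem~\ref{upperBoundFamilyLex1FamGFamH} therefore yields
\begin{equation*}
(p+t)\,\Sd_A({\cal H})+t\;\le\;\Sd({\cal F}\circ{\cal H})\;\le\;(p+t)\,\Sd_A({\cal H})+t\cdot\zeta({\cal H}).
\end{equation*}

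Next I would argue $\zeta({\cal H})=1$ in each case. Since ${\cal B}_1({\cal H})\cap{\cal B}_2({\cal H})=\emptyset$, we have $|B_2-B_1|\ge 1$ for every choice of $B_1\in{\cal B}_1({\cal H})$ and $B_2\in{\cal B}_2({\cal H})$, so $\zeta({\cal H})\ge 1$. For the matching upper bound, in (i) the pair $B_1=\{v_2,v_3\}\in{\cal B}_1({\cal H})$ and $B_2=\{v_2,v_4\}\in{\cal B}_2({\cal H})$ satisfies $|B_2-B_1|=1$, and in (ii) the pair $B_1=B'\cup\{v_{n+2},v_{n+3}\}\in{\cal B}_1({\cal H})$ and $B_2=B'\cup\{v_{n+2},v_{n+4}\}\in{\cal B}_2({\cal H})$ does the same, so $\zeta({\cal H})=1$ in both situations and the two bounds of (\ref{eqIneqSdA}) collapse.

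Finally I would substitute the values of $\Sd_A({\cal H})$: for ${\cal H}\subseteq{\cal H}_5$ we have $\dim_A(P_5)=\dim_A(C_5)=2$ by Remark~\ref{RemarkPathCycle}, and the listed bases give $\Sd_A({\cal H})=2$, producing $\Sd({\cal F}\circ{\cal H})=(p+t)\cdot 2+t=2p+3t$; for ${\cal H}\subseteq{\cal H}_{ex}^{(n)}$ the paragraph introducing ${\cal H}_{ex}^{(n)}$ already records $\Sd_A({\cal H})=\lfloor(2n+2)/5\rfloor+2$, yielding $\Sd({\cal F}\circ{\cal H})=(p+t)(\lfloor(2n+2)/5\rfloor+2)+t$. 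There is no genuine obstacle here: the entire proof is an assembly of Theorem~\ref{upperBoundFamilyLex1FamGFamH}, Corollary~\ref{cond2VMNotEmptyPartCase1} and the previously verified structural facts about ${\cal H}_5$ and ${\cal H}_{ex}^{(n)}$; the only bookkeeping care required is to keep in mind that the ``outer'' vertex set $V_1$ of Theorem~\ref{upperBoundFamilyLex1FamGFamH} corresponds here to $V_1\cup V_2$ of cardinality $p+t$, while the ``inner'' vertex set $V_2$ of the theorem corresponds to the vertex set of ${\cal H}$.
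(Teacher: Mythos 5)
Your proposal is correct and follows essentially the same route as the paper, which presents this proposition as a direct assembly of Theorem~\ref{upperBoundFamilyLex1FamGFamH}, Corollary~\ref{cond2VMNotEmptyPartCase1} (giving $V_M({\cal G}'\cup{\cal G}'')=V_2$) and the structural facts about ${\cal H}_5$ and ${\cal H}_{ex}^{(n)}$ established in the preceding paragraphs. The only cosmetic difference is that you certify $\zeta({\cal H})=1$ by exhibiting explicit bases $B_1\in{\cal B}_1({\cal H})$, $B_2\in{\cal B}_2({\cal H})$ with $\vert B_2-B_1\vert=1$, while the paper invokes the vertex $v_3$ (resp.\ $v_{n+1}$) dominated by a basis in ${\cal B}_2$; both are among the sufficient conditions listed after Theorem~\ref{upperBoundFamilyLex1FamGFamH}.
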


\begin{proposition}
Let $G$ be a connected graph of order $q$ such that $\overline{G}$ is connected. Then, the following assertions hold:
\begin{enumerate}[{\rm (i)}]
\item For every non-empty subfamily ${\cal H} \subseteq {\cal H}_5$, $$\Sd(\{G,\overline{G}\} \circ {\cal H})=q\cdot\Sd_A({\cal H})+|V_M(\{G,\overline{G}\})|=2q+|V_T(G)|+|V_F(G)|.$$
\item For every $n \ge 7$, where $n \mod 5 \in \{0,2,4\}$, and every non-empty subfamily ${\cal H} \subseteq {\cal H}_{ex}^{(n)}$, $$\Sd(\{G,\overline{G}\}\circ {\cal H})=q\cdot\Sd_A({\cal H})+|V_M(\{G,\overline{G}\})|=q\cdot\left(\left\lfloor\frac{2n+2}{5}\right\rfloor+2\right)+|V_T(G)|+|V_F(G)|.$$
\end{enumerate}
\end{proposition}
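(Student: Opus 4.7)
The plan is to apply Theorem~\ref{upperBoundFamilyLex1FamGFamH} with ${\cal G}=\{G,\overline{G}\}$, combined with the structural information about ${\cal H}_5$ and ${\cal H}_{ex}^{(n)}$ gathered in the paragraphs immediately preceding the statement. Almost everything needed has already been assembled elsewhere; the argument is essentially a careful collation.

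First I would invoke Corollary~\ref{cond1VMNotEmptyPartCase1} to get $V_M(\{G,\overline{G}\})=V_T(G)\cup V_F(G)$, so that $|V_M(\{G,\overline{G}\})|=|V_T(G)|+|V_F(G)|$, since the true-twin and false-twin vertex sets are disjoint; note that this identity remains valid, with both sides zero, when $G$ has no twins at all. Next, I would quote the discussions preceding the proposition to record, for each relevant family ${\cal H}$, the following facts: both ${\cal B}_1({\cal H})$ and ${\cal B}_2({\cal H})$ are non-empty, ${\cal B}_1({\cal H})\cap{\cal B}_2({\cal H})=\emptyset$, and $\zeta({\cal H})=1$ (witnessed by the common neighbour $v_3$ in the case of ${\cal H}_5$ and by $v_{n+1}$ in the case of ${\cal H}_{ex}^{(n)}$). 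Those same discussions also supply $\Sd_A({\cal H})=2$ for every non-empty ${\cal H}\subseteq{\cal H}_5$, and $\Sd_A({\cal H})=\lfloor (2n+2)/5\rfloor+2$ for every non-empty ${\cal H}\subseteq{\cal H}_{ex}^{(n)}$.

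The proof then splits on whether $V_M(\{G,\overline{G}\})$ is empty. If it is, the first case of Theorem~\ref{upperBoundFamilyLex1FamGFamH} applies and gives $\Sd(\{G,\overline{G}\}\circ{\cal H})=q\cdot \Sd_A({\cal H})$, which matches the claimed formula because the twin contribution vanishes. Otherwise, the second case applies, giving the inequality chain~(\ref{eqIneqSdA}); since $\zeta({\cal H})=1$, the lower and upper bounds of~(\ref{eqIneqSdA}) coincide and force
\[
\Sd(\{G,\overline{G}\}\circ{\cal H})=q\cdot \Sd_A({\cal H})+|V_M(\{G,\overline{G}\})|.
\]
Substituting the values of $\Sd_A({\cal H})$ and $|V_M(\{G,\overline{G}\})|$ recorded in the previous paragraph then produces the explicit formulas in (i) and (ii).

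There is no real obstacle: the only nontrivial parts (the explicit description of ${\cal B}_1({\cal H})$, ${\cal B}_2({\cal H})$ and the verification $\zeta({\cal H})=1$) have already been worked out in the text preceding the statement, and the rest is a direct application of Theorem~\ref{upperBoundFamilyLex1FamGFamH} and Corollary~\ref{cond1VMNotEmptyPartCase1}.
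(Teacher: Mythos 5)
Your proposal is correct and follows essentially the same route as the paper, which presents this proposition as a direct assembly of Theorem~\ref{upperBoundFamilyLex1FamGFamH} (with the bounds of Equation~(\ref{eqIneqSdA}) collapsing because $\zeta({\cal H})=1$), Corollary~\ref{cond1VMNotEmptyPartCase1}, and the preceding computations of $\Sd_A$, ${\cal B}_1$, ${\cal B}_2$ and $\zeta$ for ${\cal H}_5$ and ${\cal H}_{ex}^{(n)}$. Your explicit treatment of the twin-free case via Equation~(\ref{eqFirstEqSdA}) is a small extra care the paper leaves implicit, but it is the same argument.
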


The previous examples additionally show that the bounds of Equation~(\ref{eqIneqSdA}) are tight. In general, the upper bound is reached when $\min\{\vert S_1 \vert,\vert S_2 \vert,\vert S_3 \vert\}=\vert S_3 \vert$ or when for every $B_1 \in {\cal B}_1({\cal H})$ there exist exactly $k_2$ vertices $v_{p_1},v_{p_2},\ldots,v_{p_r} \in V_2-B_1$ such that $B_1 \cap N_{H}(v_{p_i})=\emptyset$ for some $H \in {\cal H}$ and for every $B_2 \in {\cal B}_2({\cal H})$ there exist exactly $k_2$ vertices $v_{q_1},v_{q_2},\ldots,v_{q_s} \in V_2-B_2$ such that $B_2 \subseteq N_{H}(v_{q_i})$ for some $H \in {\cal H}$.

In order to present our next results, we introduce some additional definitions. For a family ${\cal H}$ of non-trivial graphs on a common vertex set $V$, and a simultaneous adjacency basis $B \in {\cal B}({\cal H})$, consider the sets $$P(B)=\{v \in V :\; B \subseteq N_{H}(v) \textrm{ for some } H \in {\cal H}\}$$ and $$Q(B)=\{v \in V :\; B \cap N_{H}(v)=\emptyset \textrm{ for some } H \in {\cal H}\}.$$

Based on the definitions of $P(B)$ and $Q(B)$, we define the parameter $$\xi(G,{\cal H})=\underset{B \in {\cal B}({\cal H})}{\min}\left\{\vert P(B)\vert\left(\vert V_T(G)\vert -\vert T(G)\vert\right)+\vert Q(B)\vert \left(\vert V_F(G)\vert -\vert F(G)\vert\right)\right\}.$$

Finally, for a graph $G$, let $V'_T(G)=\bigcup_{v^* \in T(G)}(v^*-\{v\})$ be the set composed by all vertices, except one, from every true twin equivalence class of $G$. Likewise, let $V'_F(G)=\bigcup_{v^* \in F(G)}(v^*-\{v\})$ be the set composed by all vertices, except one, from every false twin equivalence class of $G$. For convenience, we will assume without loss of generality that for every graph $G$ a fixed vertex will always be the one excluded from every true or false twin equivalence class when constructing $V'_T(G)$ or $V'_F(G)$, respectively. With these definitions in mind, we give our next result.

\begin{theorem}\label{upperBoundFamilyLex2}
Let $G$ be a connected graph of order $n$ and let ${\cal H}=\{H_1,H_2,\ldots,H_k\}$ be a family of non-trivial graphs on a common vertex set $V_2$. If for every simultaneous adjacency basis $B$ of ${\cal H}$ there exists $H \in {\cal H}$ where one vertex $v$ satisfies $B \subseteq N_{H}(v)$, or there exists $H' \in {\cal H}$ for which $B$ is not a dominating set, then $$n\cdot\Sd_A({\cal H})\leq\Sd(G \circ {\cal H}) \le n\cdot\Sd_A({\cal H})+ \xi(G,{\cal H}).$$
\end{theorem}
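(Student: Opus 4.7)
The lower bound follows immediately by applying Theorem~\ref{lowerBoundSdLexSdAH} to the singleton family $\{G\}$, so the whole task reduces to proving the upper bound.

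For the upper bound, the plan is to pick a simultaneous adjacency basis $B\in{\cal B}({\cal H})$ that attains the minimum defining $\xi(G,{\cal H})$, and to exhibit the set
\[
S=\bigl(V(G)\times B\bigr)\cup \bigl(V'_T(G)\times P(B)\bigr)\cup \bigl(V'_F(G)\times Q(B)\bigr)
\]
as a simultaneous metric generator for $G\circ{\cal H}$. Since $|V'_T(G)|=|V_T(G)|-|T(G)|$ and $|V'_F(G)|=|V_F(G)|-|F(G)|$, the cardinality of $S$ is at most $n\cdot\Sd_A({\cal H})+|P(B)|(|V_T(G)|-|T(G)|)+|Q(B)|(|V_F(G)|-|F(G)|)$, which, by the choice of $B$, equals $n\cdot\Sd_A({\cal H})+\xi(G,{\cal H})$, giving the desired inequality once $S$ is shown to resolve every $G\circ H\in G\circ{\cal H}$.

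To verify that $S$ resolves, I would fix $H\in{\cal H}$ and two distinct vertices $(u_i,v_r),(u_j,v_s)\in V(G\circ H)-S$, and split into four cases using Claim~\ref{basictoolLexicographic}. If $i=j$, the inclusion $\{u_i\}\times B\subseteq S$ together with $B$ being an adjacency generator for $H$ supplies a distinguishing coordinate. If $i\ne j$ and $u_i,u_j$ are not twins in $G$, then, since $u_i,u_j$ always lie in each other's closed neighbourhood or not at all, the symmetric difference $N_G[u_i]\triangle N_G[u_j]$ contains some $u_k\notin\{u_i,u_j\}$ with $d_{G,2}(u_i,u_k)\ne d_{G,2}(u_j,u_k)$; any $(u_k,v)\in V(G)\times B\subseteq S$ then distinguishes the pair via the distance formula $d_{G\circ H,2}((u_i,v_r),(u_k,v))=d_{G,2}(u_i,u_k)$.

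The delicate case, and the principal obstacle, is when $u_i,u_j$ are twins in $G$. Since $V'_T(G)$ omits exactly one representative per true twin class, at least one of $u_i,u_j$---say $u_i$---belongs to $V'_T(G)$ whenever $u_i,u_j$ are true twins. From $(u_i,v_r)\notin S$ it then follows that $v_r\notin B\cup P(B)$, so $v_r\notin P(B)$ guarantees that for the fixed $H$ one can pick $v\in B$ with $v\notin N_H(v_r)$; the vertex $(u_i,v)\in S$ separates the pair because $d_{G\circ H,2}((u_i,v_r),(u_i,v))=d_{H,2}(v_r,v)=2$ while $d_{G\circ H,2}((u_j,v_s),(u_i,v))=d_G(u_j,u_i)=1$. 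The false twin subcase is symmetric: $u_i\in V'_F(G)$ forces $v_r\notin Q(B)$, hence some $v\in B\cap N_H(v_r)$ exists, and the distances $1$ and $2$ separate the pair in the opposite order. The subtlety to watch is that the distinguishing $v$ is allowed to depend on $H$ while $S$ itself is fixed; the definitions of $P(B)$ and $Q(B)$ as unions over all $H\in{\cal H}$ are precisely what makes this coordination possible, and the choice to exclude only one representative per twin class is what keeps the added cost down to $\xi(G,{\cal H})$.
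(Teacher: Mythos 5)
Your proof is correct and takes essentially the same route as the paper: you build the same simultaneous metric generator $\bigl(V(G)\times B\bigr)\cup\bigl(V'_T(G)\times P(B)\bigr)\cup\bigl(V'_F(G)\times Q(B)\bigr)$ (the paper assembles it as a union of per-graph sets $S_i$, which amounts to the same set since each $H_i$ admits at most one such $w_1$ and one such $w_2$ outside $B$) and use the same cardinality count via the optimal $B$. The only difference is that you verify the resolving property directly through the four-case analysis, whereas the paper delegates cases 1--3 to \cite{JanOmo2012} and case 4 to the argument in Theorem~\ref{upperBoundFamilyLex1FamGFamH}.
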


\begin{proof}
$\Sd(G \circ {\cal H})\geq n\cdot\Sd_A({\cal H})$ by Theorem~\ref{lowerBoundSdLexSdAH}, so we only need to prove that $\Sd(G \circ {\cal H}) \le n\cdot\Sd_A({\cal H})+ \xi(G,{\cal H})$. Let $B$ be a simultaneous adjacency basis of ${\cal H}$ for which $\xi(G,{\cal H})$ is obtained. We differentiate the following cases for every graph $H_i \in {\cal H}$:
\begin{enumerate}
\item There exist $w_1,w_2 \in V_2$ such that $B \subseteq N_{H_i}(w_1)$ and $B \cap N_{H_i}(w_2)=\emptyset$. In this case, we define the set $S_i=(V(G) \times B)\cup(V'_T(G) \times \{w_1\})\cup(V'_F(G) \times \{w_2\})$.

\item There exists $w_1 \in V_2$ such that $B \subseteq N_{H_i}(w_1)$ and there exists no vertex $x\in V_2$ such that $B \cap N_{H_i}(x)=\emptyset$. In this case, we define the set $S_i=(V(G) \times B) \cup (V'_T(G) \times \{w_1\})$.

\item There exists $w_2 \in V_2$ such that $B \cap N_{H_i}(w_2)=\emptyset$ and there exists no vertex $x\in V_2$ such that $B \subseteq N_{H_i}(x)$. In this case, we  define the set $S_i=(V(G) \times B) \cup (V'_F(G) \times \{w_2\})$.

\item There exists no vertex $x\in V_2$ such that $B \subseteq N_{H_i}(x)$ or $B \cap N_{H_i}(x)=\emptyset$. In this case, we define the set $S_i=V(G) \times B$.
\end{enumerate}

For cases 1, 2 and 3, it is shown in \cite{JanOmo2012} that the corresponding set $S_i$ is a metric generator for $G \circ H_i$. Moreover, as we discussed in the proof of Theorem~\ref{upperBoundFamilyLex1FamGFamH}, in case 4 the corresponding set $S_i$ is a metric generator for $G \circ H_i$. In consequence, the set $S=\underset{1\leq i \leq k}{\bigcup}S_i$ is a simultaneous metric generator for $G \circ {\cal H}$. Therefore, $\Sd(G \circ {\cal H}) \leq \vert S \vert = n\cdot\Sd_A({\cal H})+ \xi(G,\mathcal{H})$, so the result holds.
\end{proof} 

The bounds of the inequalities in Theorem~\ref{upperBoundFamilyLex2} are tight. As pointed out in \cite{JanOmo2012}, a twins-free graph $G$ satisfies $T(G)=V_T(G)=F(G)=V_F(G)=\emptyset$. In consequence, $\xi(G,{\cal H})=0$ for any twins-free graph $G$ and any graph family ${\cal H}$, so Theorem~\ref{upperBoundFamilyLex2} leads to the next result.

\begin{proposition}
Let $G$ be a twins-free connected graph of order $n$, and let ${\cal H}$ be a family of non-trivial graphs on a common vertex set. Then, $$\Sd(G\circ{\cal H})=n\cdot\Sd_A({\cal H}).$$
\end{proposition}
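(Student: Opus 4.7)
The plan is to sandwich $\Sd(G\circ{\cal H})$ between two matching bounds. The lower bound $\Sd(G\circ{\cal H})\ge n\cdot\Sd_A({\cal H})$ is immediate from Theorem~\ref{lowerBoundSdLexSdAH} applied with ${\cal G}=\{G\}$. For the upper bound, I would argue directly rather than invoking Theorem~\ref{upperBoundFamilyLex2}, since the twins-free hypothesis on $G$ makes $V'_T(G)=V'_F(G)=\emptyset$, and the construction collapses to a very clean one.

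Concretely, let $B$ be a simultaneous adjacency basis of ${\cal H}$, and set $S=V(G)\times B$. I would show that $S$ is a simultaneous metric generator for $G\circ{\cal H}$, which yields $\Sd(G\circ{\cal H})\le |S|=n\cdot\Sd_A({\cal H})$. To verify this, fix $H\in{\cal H}$ and two distinct vertices $(u_i,v_r),(u_j,v_s)\in V(G\circ H)-S$. If $u_i=u_j$, then $v_r,v_s\in V(H)-B$ are distinct, so by the adjacency-distinguishing property of $B$ there is $v\in B$ with $d_{H,2}(v_r,v)\ne d_{H,2}(v_s,v)$, and by Claim~\ref{basictoolLexicographic}(iii) the vertex $(u_i,v)\in S$ distinguishes them.

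The delicate case is $u_i\ne u_j$: here I need to produce $u_z\in V(G)-\{u_i,u_j\}$ with $d_{G,2}(u_i,u_z)\ne d_{G,2}(u_j,u_z)$, because then Claim~\ref{basictoolLexicographic}(ii) gives $d_{G\circ H,2}((u_i,v_r),(u_z,v))=d_{G,2}(u_i,u_z)\ne d_{G,2}(u_j,u_z)=d_{G\circ H,2}((u_j,v_s),(u_z,v))$ for any $v\in B$ (and $(u_z,v)\in S$). Since $G$ is twins-free, both $N_G(u_i)\ne N_G(u_j)$ and $N_G[u_i]\ne N_G[u_j]$ hold. If $u_i\nsim u_j$, any $u_z\in N_G(u_i)\triangle N_G(u_j)$ does the job and automatically $u_z\notin\{u_i,u_j\}$, since $u_i,u_j$ do not lie in $N_G(u_i)\cup N_G(u_j)$ under this assumption. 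If $u_i\sim u_j$, pick $u_z\in N_G[u_i]\triangle N_G[u_j]$; both $u_i$ and $u_j$ belong to $N_G[u_i]\cap N_G[u_j]$ in this case, so $u_z\ne u_i,u_j$, and hence $u_z\in N_G(u_i)\triangle N_G(u_j)$. In either situation $u_z$ is adjacent to exactly one of $u_i,u_j$, giving the required $d_{G,2}$-distinction.

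The main obstacle is precisely this last case analysis: guaranteeing that the distinguishing vertex lies outside $\{u_i,u_j\}$ so that Claim~\ref{basictoolLexicographic}(ii) applies cleanly. Once this is done, combining both inequalities yields the equality $\Sd(G\circ{\cal H})=n\cdot\Sd_A({\cal H})$. As an alternative, one could instead split on whether ${\cal B}_1({\cal H})\cap{\cal B}_2({\cal H})=\emptyset$: in that case Theorem~\ref{upperBoundFamilyLex2} applies and $\xi(G,{\cal H})=0$ because $|V_T(G)|=|T(G)|=|V_F(G)|=|F(G)|=0$; otherwise Theorem~\ref{upperBoundFamilyLex1FamGFamH} applies with ${\cal G}=\{G\}$ and $V_M(\{G\})=\emptyset$, giving Equation~(\ref{eqFirstEqSdA}) directly. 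Both routes lead to the same conclusion.
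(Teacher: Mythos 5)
Your proof is correct, but it takes a more direct route than the paper. The paper obtains this proposition in one line: since a twins-free graph has $T(G)=V_T(G)=F(G)=V_F(G)=\emptyset$, one gets $\xi(G,{\cal H})=0$, and the result is read off from Theorem~\ref{upperBoundFamilyLex2} together with the lower bound of Theorem~\ref{lowerBoundSdLexSdAH}. You instead prove the upper bound from scratch: for any simultaneous adjacency basis $B$ of ${\cal H}$ you show $S=V(G)\times B$ is a simultaneous metric generator, handling the column case via Claim~\ref{basictoolLexicographic}(iii) and the cross-column case by using twins-freeness to produce a vertex $u_z\notin\{u_i,u_j\}$ adjacent to exactly one of $u_i,u_j$; your verification that $u_z$ avoids $\{u_i,u_j\}$ (splitting on $u_i\sim u_j$ or not, and using closed versus open neighbourhoods accordingly) is exactly the point that needs care, and it is done correctly. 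What each approach buys: the paper's derivation is shorter given the machinery already built, but Theorem~\ref{upperBoundFamilyLex2} as stated carries a hypothesis on ${\cal H}$ (equivalent to ${\cal B}_1({\cal H})\cap{\cal B}_2({\cal H})=\emptyset$) that the proposition does not assume, so strictly speaking the paper's one-line deduction needs the complementary case to be handled, e.g.\ via Theorem~\ref{upperBoundFamilyLex1FamGFamH} with $V_M(\{G\})=\emptyset$ — which is precisely the alternative route you sketch at the end. Your direct argument sidesteps this case split entirely and makes transparent the stronger fact that for twins-free $G$ the set $V(G)\times B$ works for \emph{every} simultaneous adjacency basis $B$, essentially reproducing in a cleaner setting the ``not twins'' case from the proofs of Theorems~\ref{mainLemmaLexProd} and~\ref{upperBoundFamilyLex1FamGFamH}.
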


Recall the families ${\cal K}(V)$ of star graphs defined in Section~\ref{sectBasicResults}. The following result is an example of a family for which the upper bound of the inequalities of Theorem~\ref{upperBoundFamilyLex2} is reached.

\begin{proposition}\label{partCaseP2CircStars}
For every finite set $V$ of size $|V|\ge 4$, $\Sd(P_2 \circ {\cal K}(V))=2\cdot|V|-1$.
\end{proposition}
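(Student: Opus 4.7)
The plan is to sandwich $\Sd(P_2\circ{\cal K}(V))$ between $2|V|-1$ and $2|V|-1$, with the upper bound coming from Theorem~\ref{upperBoundFamilyLex2} and the lower bound from a refinement of Theorem~\ref{lowerBoundSdLexSdAH}.

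For the upper bound, I will apply Theorem~\ref{upperBoundFamilyLex2} with $G=P_2$ and ${\cal H}={\cal K}(V)$. Its hypothesis holds because every simultaneous adjacency basis of ${\cal K}(V)$ has the form $B=V-\{v_i\}$ (Corollary~\ref{partCaseStars}, together with the observation used in the proof of Proposition~\ref{partCaseK1PlusStars}), and $B\subseteq N_{K_{1,n-1}^i}(v_i)$ since the centre $v_i$ of the star $K_{1,n-1}^i\in{\cal K}(V)$ satisfies $N(v_i)=V-\{v_i\}=B$. Next I compute $\xi(P_2,{\cal K}(V))$: the vertices $u_1,u_2$ of $P_2$ form the only true twin class, so $|V_T(P_2)|-|T(P_2)|=1$ and $|V_F(P_2)|-|F(P_2)|=0$. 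For every basis $B=V-\{v_i\}$ the set $P(B)$ reduces to $\{v_i\}$, because $v_i$ is the only vertex whose neighbourhood in some $K_{1,n-1}^k\in{\cal K}(V)$ contains $B$ (leaves have neighbourhoods of size $1$, insufficient for $|B|=|V|-1\ge 3$). Therefore $\xi(P_2,{\cal K}(V))=1$ and Theorem~\ref{upperBoundFamilyLex2} delivers $\Sd(P_2\circ{\cal K}(V))\le 2(|V|-1)+1=2|V|-1$.

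For the lower bound, Theorem~\ref{lowerBoundSdLexSdAH} already yields $\Sd(P_2\circ{\cal K}(V))\ge 2(|V|-1)$, so I need only rule out equality. Suppose for contradiction that $S$ is a simultaneous metric basis of size $2|V|-2$. Let $R_r=\{u_r\}\times V$ and let $S_r\subseteq V$ be the projection of $S\cap R_r$. By the argument inside the proof of Theorem~\ref{lowerBoundSdLexSdAH}, $S_r$ is a simultaneous adjacency generator for ${\cal K}(V)$, so $|S_r|\ge|V|-1$; combined with $|S_1|+|S_2|=|S|=2|V|-2$ this forces $|S_1|=|S_2|=|V|-1$, whence each $S_r$ is a simultaneous adjacency basis of ${\cal K}(V)$, of the form $S_1=V-\{v_i\}$ and $S_2=V-\{v_j\}$. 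I then split into two subcases. If $i=j$, the uncovered vertices are $(u_1,v_i)$ and $(u_2,v_i)$; in $P_2\circ K_{1,n-1}^i$ both are universal, forming a true twin pair, so no element of $S$ distinguishes them, a contradiction. If $i\ne j$, the uncovered pair is $\{(u_1,v_i),(u_2,v_j)\}$, and I will analyse the graph $P_2\circ K_{1,n-1}^i$ (and symmetrically $P_2\circ K_{1,n-1}^j$), where $(u_1,v_i)$ is universal and $(u_2,v_j)$ is a pendant attached to $(u_2,v_i)$, to show that the prescribed shape of $S$ inherited from $S_1,S_2$ contains no element capable of distinguishing this pair.

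The main obstacle is this second subcase: the forced shape of $S_1,S_2$ does not by itself close the gap, so the combinatorial crux of the argument is pinning down, for a strategic choice of $k\in\{i,j\}$, exactly which vertices of $V(P_2\circ K_{1,n-1}^k)$ can separate the asymmetric missing pair and then using the structural prescription $S\subseteq(\{u_1\}\times(V-\{v_i\}))\cup(\{u_2\}\times(V-\{v_j\}))$ to rule each of them out, likely by combining the twin-based obstruction used in Theorem~\ref{SdVminus1} with the "$B\subseteq N_H(v)$" bookkeeping underpinning Theorem~\ref{relSimAdjDimK1JoinFamily}.
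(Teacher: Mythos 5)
Your upper bound is exactly the paper's own argument: Theorem~\ref{upperBoundFamilyLex2} applied with $\xi(P_2,{\cal K}(V))=1$ (your computation $P(B)=\{v_i\}$ and $|V_T(P_2)|-|T(P_2)|=1$ is correct), giving $\Sd(P_2\circ{\cal K}(V))\le 2|V|-1$. The divergence, and the gap, lies in the lower bound. The paper does not argue on generators of $P_2\circ{\cal K}(V)$ directly; it writes $P_2\circ H\cong H+H$ and invokes Theorem~\ref{SimAdjDimJoinsFamilyCase2} for ${\cal K}(V)+{\cal K}(V)$. You instead suppose a simultaneous metric basis $S$ of size $2|V|-2$, correctly force $S=(\{u_1\}\times(V-\{v_i\}))\cup(\{u_2\}\times(V-\{v_j\}))$, dispose of the subcase $i=j$ (indeed $(u_1,v_i)$ and $(u_2,v_i)$ are both universal, hence true twins, in $P_2\circ K_{1,n-1}^i$), and then only promise an analysis of $i\ne j$ (``likely by combining\ldots''). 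That subcase carries the whole weight of the lower bound, and it cannot be completed in the way you intend.

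Concretely, for $i\ne j$ the unique pair outside $S$ is $\{(u_1,v_i),(u_2,v_j)\}$, and $S$ does distinguish it in every member $P_2\circ K_{1,n-1}^k$ of the family. If $k\ne i$, pick $x\notin\{i,k\}$ (possible since $|V|\ge 4$): by Claim~\ref{basictoolLexicographic}, $d_{P_2\circ K_{1,n-1}^k}((u_1,v_x),(u_1,v_i))=d_{K_{1,n-1}^k,2}(v_x,v_i)=2$ because $v_x$ and $v_i$ are nonadjacent in $K_{1,n-1}^k$, while $d_{P_2\circ K_{1,n-1}^k}((u_1,v_x),(u_2,v_j))=1$, and $(u_1,v_x)\in S$. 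If $k=i$, then $k\ne j$ and symmetrically $(u_2,v_x)$ with $x\notin\{i,j\}$ separates the pair. (Your description of $(u_2,v_j)$ as a pendant vertex of $P_2\circ K_{1,n-1}^i$ is inaccurate: it is adjacent to all of $\{u_1\}\times V$ in addition to $(u_2,v_i)$.) So no contradiction is available in your second subcase; on the contrary, this computation exhibits a simultaneous metric generator of cardinality $2|V|-2$, which together with Theorem~\ref{lowerBoundSdLexSdAH} would force $\Sd(P_2\circ{\cal K}(V))=2|V|-2$, in conflict with the stated value. The obstruction the paper's route relies on lives in the mixed joins $K_{1,n-1}^i+K_{1,n-1}^j$ with $i\ne j$ (there the two missing vertices are the centres of the two sides, hence true twins), but those graphs belong to ${\cal K}(V)+{\cal K}(V)$ and not to $P_2\circ{\cal K}(V)$, which consists only of the diagonal graphs $K_{1,n-1}^k+K_{1,n-1}^k$; the identification $\Sd(P_2\circ{\cal K}(V))=\Sd({\cal K}(V)+{\cal K}(V))$ yields only an inequality in the unhelpful direction. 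In short, your lower-bound route has a genuine and unfixable gap at the subcase $i\ne j$: the statement you hope to prove there is false, and the difficulty you flagged is real rather than a matter of unfinished bookkeeping.
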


\begin{proof}
By Corollary~\ref{partCaseStars}, every simultaneous adjacency basis $B$ of ${\cal K}(V)$ has the form $V-\{v_i\}$, $i \in \{1,\ldots,|V|\}$. In $K_{1,n-1}^i$, we have that $B \subseteq N_{K_{1,n-1}^i}(v_i)$, so $\xi(P_2,{\cal K}(V))=1$. Thus, $\Sd(P_2 \circ {\cal K}(V)) \leq 2 \cdot \Sd_A({\cal K}(V))+1=2 \cdot |V|-1$. Additionally, since $P_2 \circ H \cong H+H$ for any graph $H$, we have that $\Sd(P_2 \circ {\cal K}(V))=\Sd({\cal K}(V)+{\cal K}(V)) \ge 2\cdot\Sd_A({\cal K}(V))+1=2\cdot|V|-1$ by Theorem~\ref{SimAdjDimJoinsFamilyCase2}, so the equality holds.
\end{proof}

As we did for join graphs, now we define large families composed by subgraphs of a lexicographic product graph $G \circ H$, which may be seen as the result of a relaxation of the lexicographic product operation, in the sense that not every pair of nodes from two copies of the second factor corresponding to adjacent vertices of the first factor must be linked by an edge. Since for any adjacency basis $B$ of $G \circ H$, the family ${\cal R}_B$ defined in the next result is a subfamily of ${\cal G}_B(G \circ H)$, the result follows directly from Theorem \ref{dimAPermFamily}.

\begin{corollary}\label{famLexPrEdgesRemoved}
Let $G$ be a connected graph of order $n$, let $H$ be a non-trivial graph and let $B$ be an adjacency basis of $G \circ H$. Let $E'=\{(u_i,u_j)(u_r,u_s) \in E(G \circ H) :\; i \neq r, (u_i,u_j) \notin B, (u_r,u_s) \notin B\}$ and let ${\cal R}_B=\{R_1,R_2,\ldots,R_k\}$ be a graph family, defined on the common vertex set $V(G \circ H)$, such that, for every $l \in \{1,\ldots,k\}$, $E(R_l)=E(G \circ H)-E_l$, for some edge subset $E_l \subseteq E'$. Then $$\Sd({\cal R}_B) \leq \dim(G \circ H).$$
\end{corollary}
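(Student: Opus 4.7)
The plan is to verify that ${\cal R}_B$ is a subfamily of ${\cal G}_B(G\circ H)$, so that the corollary reduces to a direct application of Theorem~\ref{dimAPermFamily}, combined with the equality $\dim(G\circ H)=\dim_A(G\circ H)$ supplied by Theorem~\ref{mainLemmaLexProd}.

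First I would establish the containment ${\cal R}_B\subseteq {\cal G}_B(G\circ H)$. Fix an arbitrary $R_l\in {\cal R}_B$. By construction, $E(R_l)=E(G\circ H)-E_l$ for some $E_l\subseteq E'$, and every edge in $E'$ has both of its endpoints in $V(G\circ H)-B$. Consequently, no edge incident to a vertex of $B$ is removed when passing from $G\circ H$ to $R_l$, so $N_{R_l}(x)=N_{G\circ H}(x)$ for every $x\in B$. Taking $f$ to be the identity permutation of $V(G\circ H)$, which belongs to the stabilizer ${\cal S}(B)$, we obtain $N_{R_l}(x)=f(N_{G\circ H}(x))$ for every $x\in B$. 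Hence $R_l\in {\cal G}_{B,\mathrm{Id}}(G\circ H)\subseteq {\cal G}_B(G\circ H)$.

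Next I would apply Theorem~\ref{dimAPermFamily}. Since $B$ is an adjacency basis of $G\circ H$ and ${\cal R}_B\subseteq {\cal G}_B(G\circ H)$, that theorem yields that $B$ is a simultaneous adjacency generator for ${\cal R}_B$, so $\Sd_A({\cal R}_B)\le |B|=\dim_A(G\circ H)$. Combining this with $\Sd({\cal R}_B)\le \Sd_A({\cal R}_B)$ from Remark~\ref{trivialBoundsSimAdjDim}(ii) and with the identity $\dim_A(G\circ H)=\dim(G\circ H)$ furnished by Theorem~\ref{mainLemmaLexProd}, we conclude $\Sd({\cal R}_B)\le \dim(G\circ H)$.

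There is no substantive obstacle; the only point requiring care is checking that removing edges of $E'$ does not perturb the neighbourhoods of vertices of $B$, which is immediate from the definition of $E'$. Everything else is bookkeeping that invokes already-established tools.
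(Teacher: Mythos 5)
Your proposal is correct and follows essentially the same route as the paper: the paper also observes that ${\cal R}_B\subseteq {\cal G}_B(G\circ H)$ (since removed edges avoid $B$, the neighbourhoods of basis vertices are unchanged) and then invokes Theorem~\ref{dimAPermFamily} together with the equality $\dim(G\circ H)=\dim_A(G\circ H)$ from Theorem~\ref{mainLemmaLexProd}. Your write-up merely makes explicit the identity-permutation argument and the passage $\Sd({\cal R}_B)\le\Sd_A({\cal R}_B)$, which the paper leaves implicit.
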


\section{Concluding remarks}

In this paper we introduced the notion of  simultaneous adjacency dimension of graph families. We studied its properties, as well as its relationship to the simultaneous metric dimension. In particular, we obtained the exact value, or sharp bounds, for this parameter on some families. Additionally, we showed how, given an adjacency basis $B$ of a graph, large graph families can be constructed having $B$ as a simultaneous adjacency basis.

The simultaneous adjacency dimension proved useful for studying the simultaneous  metric dimension of families composed by lexicographic product graphs. From a general definition of lexicographic product, we focused on two particular cases, namely join graphs  and standard lexicographic product graphs. We obtained relations between the simultaneous metric dimension of families composed by lexicographic product graphs and the simultaneous adjacency dimension of families composed by the second factors. In several cases, these relations allowed us to obtain the exact value of the simultaneous metric dimension for a large number of graph families composed by lexicographic product graphs. 

\bibliographystyle{elsart-num-sort}

\begin{thebibliography}{10}
\expandafter\ifx\csname url\endcsname\relax
  \def\url#1{\texttt{#1}}\fi
\expandafter\ifx\csname urlprefix\endcsname\relax\def\urlprefix{URL }\fi

\bibitem{Brigham2003}
R.~C. Brigham, G.~Chartrand, R.~D. Dutton, P.~Zhang, Resolving domination in
  graphs, Mathematica Bohemica 128~(1) (2003) 25--36.
\newline\urlprefix\url{http://mb.math.cas.cz/mb128-1/3.html}

\bibitem{Brigham1990}
R.~C. Brigham, R.~D. Dutton, Factor domination in graphs, Discrete Mathematics
  86~(1--3) (1990) 127--136.
\newline\urlprefix\url{http://www.sciencedirect.com/science/article/pii/0012365X9090355L}

\bibitem{Chartrand2003}
G.~Chartrand, V.~Saenpholphat, P.~Zhang, The independent resolving number of a
  graph, Mathematica Bohemica 128~(4) (2003) 379--393.
\newline\urlprefix\url{http://mb.math.cas.cz/mb128-4/4.html}

\bibitem{Estrada-Moreno2014a}
A.~Estrada-Moreno, Y.~Ram\'irez-Cruz, J.~A. Rodr\'iguez-Vel\'azquez, On the
  adjacency dimension of graphs, arXiv:1501.04647 [math.CO].
\newline\urlprefix\url{http://arxiv.org/pdf/1501.04647v1.pdf}

\bibitem{Estrada-Moreno2013}
A.~Estrada-Moreno, J.~A. Rodr\'{\i}guez-Vel\'{a}zquez, I.~G. Yero, The
  $k$-metric dimension of a graph, Applied Mathematics \& Information Sciences.
  To appear.
\newline\urlprefix\url{http://arxiv.org/abs/1312.6840}

\bibitem{Estrada-Moreno2013corona}
A.~Estrada-Moreno, I.~G. Yero, J.~A. Rodr{\'i}guez-Vel{\'a}zquez, The
  $k$-metric dimension of corona product graphs, Bulletin of the Malaysian
  Mathematical Sciences Society. To appear.
\newline\urlprefix\url{http://math.usm.my/bulletin/pdf/acceptedpapers/2014-01-033-R1.pdf}

\bibitem{Rodriguez-Velazquez-Fernau2013}
H.~{Fernau}, J.~A. {Rodr{\'{\i}}guez-Vel{\'a}zquez}, On the (adjacency) metric
  dimension of corona and strong product graphs and their local variants:
  combinatorial and computational results, arXiv:1309.2275 [math.CO].
\newline\urlprefix\url{http://arxiv-web3.library.cornell.edu/abs/1309.2275}

\bibitem{Fernau-Ja-Corona-2014}
H.~Fernau, J.~A. Rodr\'{\i}guez-Vel\'{a}zquez, Notions of metric dimension of
  corona products: Combinatorial and computational results, in: Computer
  Science - Theory and Applications, vol. 8476 of Lecture Notes in Computer
  Science, Springer International Publishing, 2014, pp. 153--166.
\newline\urlprefix\url{http://dx.doi.org/10.1007/978-3-319-06686-8_12}

\bibitem{Hammack2011}
R.~Hammack, W.~Imrich, S.~Klav{\v{z}}ar, Handbook of product graphs, Discrete
  Mathematics and its Applications, 2nd ed., CRC Press, 2011.
\newline\urlprefix\url{http://www.crcpress.com/product/isbn/9781439813041}

\bibitem{Harary1976}
F.~Harary, R.~A. Melter, On the metric dimension of a graph, Ars Combinatoria 2
  (1976) 191--195.
\newline\urlprefix\url{http://www.ams.org/mathscinet-getitem?mr=0457289}

\bibitem{IMRAN2013}
M.~Imran, S.~A. ul~Haq~Bokhary, A.~Ahmad,
  A.~Semani\v{c}ov\'a-Fe\v{n}ov\v{c}\'{\i}kov\'a, On classes of regular graphs
  with constant metric dimension, Acta Mathematica Scientia 33~(1) (2013)
  187--206.
\newline\urlprefix\url{http://www.sciencedirect.com/science/article/pii/S0252960212602045}

\bibitem{JanOmo2012}
M.~Jannesari, B.~Omoomi, The metric dimension of the lexicographic product of
  graphs, Discrete Mathematics 312~(22) (2012) 3349--3356.
\newline\urlprefix\url{http://www.sciencedirect.com/science/article/pii/S0012365X12003317}

\bibitem{Johnson1993}
M.~Johnson, Structure-activity maps for visualizing the graph variables arising
  in drug design, Journal of Biopharmaceutical Statistics 3~(2) (1993)
  203--236, pMID: 8220404.
\newline\urlprefix\url{http://www.tandfonline.com/doi/abs/10.1080/10543409308835060}

\bibitem{Johnson1998}
M.~Johnson, Browsable structure-activity datasets, in: R.~Carb\'{o}-Dorca,
  P.~Mezey (eds.), Advances in Molecular Similarity, chap.~8, JAI Press Inc,
  Stamford, Connecticut, 1998, pp. 153--170.
\newline\urlprefix\url{http://books.google.es/books?id=1vvMsHXd2AsC}

\bibitem{Khuller1996}
S.~Khuller, B.~Raghavachari, A.~Rosenfeld, Landmarks in graphs, Discrete
  Applied Mathematics 70~(3) (1996) 217--229.
\newline\urlprefix\url{http://www.sciencedirect.com/science/article/pii/0166218X95001062}

\bibitem{Okamoto2010}
F.~Okamoto, B.~Phinezy, P.~Zhang, The local metric dimension of a graph,
  Mathematica Bohemica 135~(3) (2010) 239--255.
\newline\urlprefix\url{http://dml.cz/dmlcz/140702}

\bibitem{Ramirez-Cruz-Rodriguez-Velazquez_2014}
Y.~Ram\'irez-Cruz, O.~R. Oellermann, J.~A. Rodr\'iguez-Vel\'azquez, The
  simultaneous metric dimension of graph families. Submitted. 
\newline\urlprefix\url{http://arxiv.org/abs/1501.00565}

\bibitem{Ramirez2014}
Y.~Ram\'irez-Cruz, O.~R. Oellermann, J.~A. Rodr\'iguez-Vel\'azquez,
  Simultaneous resolvability in graph families, Electronic Notes in Discrete
  Mathematics 46~(0) (2014) 241 -- 248.
\newline\urlprefix\url{http://www.sciencedirect.com/science/article/pii/S157106531400033X}

\bibitem{Sebo2004}
A.~Seb\"{o}, E.~Tannier, On metric generators of graphs, Mathematics of
  Operations Research 29~(2) (2004) 383--393.
\newline\urlprefix\url{http://dx.doi.org/10.1287/moor.1030.0070}

\bibitem{Slater1975}
P.~J. Slater, Leaves of trees, Congressus Numerantium 14 (1975) 549--559.

\end{thebibliography}

\end{document}